\DeclareMathOperator{\Gal}{Gal}
\DeclareMathOperator{\ord}{ord}
\DeclareMathOperator{\GL}{GL}
\newtheorem{theorem}{Theorem}[section]
\newtheorem*{theorem*}{Theorem}
\newtheorem{lemma}[theorem]{Lemma}
\newtheorem{proposition}[theorem]{Proposition}
\newtheorem{corollary}[theorem]{Corollary}
\newtheorem{defn}[theorem]{Definition}
\numberwithin{equation}{section}
\newtheorem{lthm}{Theorem} 
\newtheorem{corl}[lthm]{Corollary}
\theoremstyle{remark}
\newtheorem{remark}[theorem]{Remark}
\newtheorem{example}[theorem]{Example}
\newcommand{\EE}{\mathbb{E}}
\newcommand{\R}{\mathbb{R}}
\newcommand\EatDot[1]{}
\newcommand{\cX}{\mathcal{X}}
\newcommand{\QQ}{\mathbb{Q}}
\newcommand{\ZZ}{\mathbb{Z}}
\newcommand{\FF}{\mathbb{F}}
\newcommand{\Qp}{\mathbb{Q}_p}
\newcommand{\Zp}{\mathbb{Z}_p}
\definecolor{Green}{rgb}{0.0, 0.5, 0.0}
\newcommand{\cC}{\mathcal{C}}
\newcommand{\Z}{\mathbb{Z}}
\newcommand{\fm}{\mathfrak{m}}
\newcommand{\Deck}{\mathrm{Deck}}
\renewcommand{\t}{\mathfrak t}
\renewcommand{\ss}{\mathrm{ss}}
\newcommand{\cY}{\mathcal{Y}}
\newcommand{\Div}{\textup{Div}}
\newcommand{\Pic}{\textup{Pic}}
\newcommand{\VV}{\mathbb{V}}
  \DeclareFontFamily{U}{wncy}{}
  \DeclareFontShape{U}{wncy}{m}{n}{<->wncyr10}{}
  \DeclareSymbolFont{mcy}{U}{wncy}{m}{n}
  \DeclareMathSymbol{\sha}{\mathord}{mcy}{"58}
  \DeclareMathSymbol{\zhe}{\mathord}{mcy}{"11}
\DeclareMathSymbol{\lsb@l}{\mathalpha}{letters}{`l}
\newcommand{\mylabel}[2]{#2\def\@currentlabel{#2}\label{#1}}
\title[Constant $\Zp$-towers of graph coverings]{On $\Zp$-towers of graph coverings arising from a constant voltage assignment}
\let\@wraptoccontribs\wraptoccontribs
\author[A. Lei]{Antonio Lei}
\address[Lei]{Department of Mathematics and Statistics\\University of Ottawa\\
150 Louis-Pasteur Pvt\\
Ottawa, ON\\
Canada K1N 6N5}
\email{antonio.lei@uottawa.ca}
\author[K. Müller]{Katharina Müller}
\address[Müller]{Institut für Theoretische Informatik, Mathematik und Operations Research, Universität der Bundeswehr München, Werner-Heisenberg-Weg 39, 85577 Neubiberg, Germany}
\email{katharina.mueller@unibw.de}
\subjclass[2020]{05C25,  11R23}
\keywords{Iwasawa theory for graphs, $\mu$-invariant, isogeny graphs}
\begin{document}

\begin{abstract}
    
We investigate properties of $\Zp$-towers of graph coverings that arise from a constant voltage assignment. We prove the existence and uniqueness (up to isomorphisms) of such towers. Furthermore, we study the Iwasawa invariants of these towers, and apply our results to towers of isogeny graphs enhanced with level structures, as well as towers arising from volcano graphs.
\end{abstract}

\maketitle
\section{Introduction}
\subsection{Background}
Let $p$ be a fixed prime number. We write $\Zp$ for the ring of $p$-adic integers. Vallières, Gonet, and McGown--Vallières initiated the study of $\Zp$-towers of graph coverings (see \cite{vallieres,vallieres2,vallieres3,gonet-thesis,gonet22}). Let $X$ be a finite connected undirected graph. A $\Zp$-tower of graph coverings $(X_n)_{n\ge0}$ over $X$ is a series of graph coverings $X_n/X$, where  $X_n$ is a connected undirected graph such that $X_n/X$ is a Galois covering whose Galois group is isomorphic to the cyclic group $\ZZ/p^n\ZZ$. We write $\kappa_n$ for the number of spanning trees of $X_n$. Then there exist integers $\mu$, $\lambda$ and $\nu$ such that
\begin{equation}
\label{eq:Iw}\ord_p(\kappa_n)=\mu p^n+\lambda n+\nu    
\end{equation}
for $n\gg0$. This is analogous to the classical result of Iwasawa on class numbers of finite sub-extensions inside a $\Zp$-extension a number field \cite{iwasawa73}.

 A  $\Zp$-tower of graph coverings over an undirected $X$ can be constructed using a $\Zp$-valued voltage assignment on a \textit{directed} graph (after choosing an orientation for each edge of $X$). The graph $X_n$ can be obtained from the derived graph attached to a $\ZZ/p^n\ZZ$-valued voltage assignment, which is a priori a directed graph (see Remark~\ref{rk:undirected} for a detailed discussion). 

In the present article, we study $\Zp$-towers of graph coverings over  \textit{directed} graphs. We say that a directed graph is connected if its image under the forgetful map (which is the natural map that sends a directed edge to an undirected edge) is connected as an undirected graph. We are interested in $\Zp$-towers that arise from a voltage assignment on a finite connected directed graph whose image consists of a single element (i.e. a constant voltage assignment).

The primary motivation for studying such \( \mathbb{Z}_p \)-towers is that they naturally occur as isogeny graphs enhanced with level structures, as demonstrated in \cite{LM2}. We briefly recall the definition of these graphs below.

Let $p$ and $r$ be two distinct prime numbers. 
Let $N$ be a natural number coprime to $rp$ and let $\ell$ be a prime such that $\ell\equiv 1\pmod{Np}$. 
 We fix a finite field $\FF_q$, where $q$ is an even power of $r$. {We say that two elliptic curves defined over $\FF_q$ are equivalent if they are isomorphic over $\overline{\FF_q}$.} We fix a set of representatives of the equivalence classes of elliptic curves defined over $\mathbb{F}_{q}$, which we denote by $S$. 
Given two elliptic curves $E,E'\in S$, an isogeny $\phi:E\to E'$ is a non-constant morphism of varieties that sends the point at infinity of $E$ to the point at infinity of $E'$. We say that $\phi$ is an $\ell$-isogeny if its degree is equal to $l$. Since $l\nmid q$, this is equivalent to $\ker(\phi)$ containing $l$ elements in $E(\overline{\FF_q})$.

\begin{defn}\label{def:intro}
     For an integer $n\ge0$, we define the directed graph $X_l^q(Np^n)$ whose vertices are given by triples $(E,Q_1,Q_2)$, where $E\in S$ and $\{Q_1,Q_2\}$ is a basis of $E[Np^n]\cong (\ZZ/Np^n\ZZ)^2$. The edges of $X_l^q(Np^n)$ are given by $l$-isogenies, i.e. the set of edges from $(E,Q_1,Q_2)$ to $(E',Q_1',Q_2')$ is the set of $l$-isogenies $\phi\colon E\to E'$ such that $\phi(Q_1)=Q_1'$ and $\phi(Q_2)=Q_2'$.

We define the directed graph $Y_l^q(p^nN)$ as follows.
    The set of vertices of $Y_l^q(Np^n)$ is given by tuples $(E,R_1,R_2,\zeta)$, where $E\in S$, $\{R_1,R_2\}$ is a basis of $E[N]$ and $\zeta$ is a primitive $p^n$-th root of unity. Note that we can write a point $Q_i$ of order $Np^n$ as $Q_i=R_i+P_i$, where $R_i$ is of order $N$ and $P_i$ is of order $p^n$-torsion. Let $\Psi_n$ be the following surjection
\begin{align*}
    \Psi_n\colon V(X_l^q(Np^n))&\to V(Y_l^q(Np^n)), \\ 
    (E,Q_1,Q_2)&\mapsto (E,R_1,R_2,\langle P_1,P_2\rangle),
\end{align*}
where $\langle-,-\rangle$ denotes the Weil pairing.
For every vertex $v\in V(Y_l^q(Np^n))$, we fix a pre-image $v'\in V(X_l^q(Np^n))$ under $\Psi_n$. 
 The edges of  $Y_l^q(Np^n)$ are given as follows: 
 for every edge from $v'$ to some $w''\in \Psi_n^{-1}(w)$ in $X_l^q(Np^n)$, we draw an edge from $v$ to $w$ in $Y_l^q(Np^n)$.
     
\end{defn}

Let $Y^{\ss}$ be the supersingular subgraph of $Y_l^q(Np)$. For $n\ge1$, define $Y^{\ss}_n$ as the pre-image of $Y^{\ss}$ in $Y_l^q(Np^n)$ under the natural projection $Y_l^q(Np^n)\to Y_l^q(Np)$. We show in Lemma~\ref{lem:constant} that the collection of graphs $(Y_n^\ss)_{n\ge1}$ gives rise to a $\Zp$-tower over $Y^\ss$ that is defined by a constant voltage assignment. This motivates us to develop a general framework for studying arithmetic properties of such $\Zp$-towers.

\subsection{Main results}

We begin with a theorem that gives a sufficient and necessary condition for the existence of $\Zp$-towers over a graph $X$ given by a constant voltage assignment. Furthermore, we show that it is unique up to isomorphisms of graphs. The proof is straightforward and builds on established results from the literature. Given a directed graph $X$, let $\tilde X$ be the directed graph, where we add a supplementary edge $e^\iota$ to $X$ for each edge $e$ of $X$. Given a path $P$ in $\tilde X$, we define its \textit{weight} to be the number of edges of $P$ that belong to $X$ minus the number of supplementary edges in $P$ (see Definition~\ref{def:weight}). 

\begin{lthm}[Corollaries~\ref{cor:exist}, \ref{cor:unique} and \ref{cor:stabilize}]\label{thmA}
    Let $X$ be a finite connected directed graph. Let $\alpha$ be a constant $\Zp$-valued voltage assignment on $X$ whose image is an element of $\Zp^\times$. 
    \begin{itemize}
        \item[(a)] The derived graphs of $\alpha$ modulo $p^n$ give rise to a $\Zp$-tower of graph coverings if and only if $\tilde X$ admits a directed cycle that is of weight coprime to $p$. Furthermore, the $\Zp$-tower is independent of $\alpha$, up to isomorphism of graphs.
        \item[(b)] If $\tilde X$ contains a cycle, the derived graphs of $\alpha$ modulo $p^n$ give rise to a disjoint union of isomorphic $\Zp$-towers over $X$.
    \end{itemize}
\end{lthm}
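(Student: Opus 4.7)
The plan is to reduce both parts to an analysis of the connected components of each derived graph in terms of the subgroup of $\ZZ$ generated by weights of closed walks in $\tilde X$. Let $c \in \Zp^\times$ be the constant value of $\alpha$, and let $\alpha_n$ denote its reduction modulo $p^n$. Then the derived graph $X_n^{(\alpha)}$ has vertex set $V(X) \times \ZZ/p^n\ZZ$, with each directed edge $e\colon u\to v$ of $X$ lifting to edges $(u,g)\to(v,g+c)$ for every $g$; upon passing to the underlying undirected graph via the forgetful map, traversing a supplementary edge $e^\iota$ instead shifts the running voltage by $-c$. Hence the connected component of $(v_0,0)$ contains $(v_0,g)$ exactly when $g\equiv cw\pmod{p^n}$ for some weight $w$ of a closed walk in $\tilde X$ based at $v_0$. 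Because $X$ is connected under the forgetful map, the subgroup $W\subseteq\ZZ$ generated by such weights does not depend on $v_0$; write $d$ for its nonnegative generator.

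For part (a), the derived graph $X_n^{(\alpha)}$ is connected iff $cW$ surjects onto $\ZZ/p^n\ZZ$. Since $c$ is a $p$-adic unit, this holds for every $n$ iff $p\nmid d$, which, by decomposing closed walks into cycles, is equivalent to $\tilde X$ admitting a directed cycle of weight coprime to $p$. Compatibility of the projections $X_{n+1}^{(\alpha)}\to X_n^{(\alpha)}$ with the $\ZZ/p^n\ZZ$-actions then yields the tower structure. For uniqueness, given another unit $c'=uc$ with $u\in\Zp^\times$, the level-wise map $(v,g)\mapsto(v,ug)$ sends edges $(u,g)\to(v,g+c)$ of the $c$-derived graph to edges $(u,ug)\to(v,ug+c')$ of the $c'$-derived graph, and is compatible with the projections because $u$ is consistent modulo every $p^n$; this assembles into a tower isomorphism.

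For part (b), I would write $d=p^a u'$ with $u'\in\Zp^\times$, using the hypothesis that $\tilde X$ contains a cycle to ensure $d>0$. For $n\ge a$, the subgroup $cW\pmod{p^n}$ equals $p^a\ZZ/p^n\ZZ$, which has index $p^a$, so $X_n^{(\alpha)}$ decomposes into exactly $p^a$ connected components, each a $\ZZ/p^{n-a}\ZZ$-Galois covering of $X$. The $\ZZ/p^n\ZZ$-action on the second coordinate permutes the components transitively by translating cosets of $cW$, so the components are pairwise isomorphic as coverings of $X$. The projection $X_{n+1}^{(\alpha)}\to X_n^{(\alpha)}$ respects this coset decomposition, so the components assemble into $p^a$ mutually isomorphic $\Zp$-towers over $X$. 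For $n<a$, $cW$ is trivial modulo $p^n$ and the derived graph is a disjoint union of $p^n$ copies of $X$, consistent with reading off the initial levels of these towers.

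The main technical point is the bookkeeping in part (b): I need to check that the components at different levels line up via the projections to form genuine $\Zp$-towers, and that within each component the $\ZZ/p^{n-a}\ZZ$-Galois structure is correctly induced by the ambient $\ZZ/p^n\ZZ$-action on cosets of $cW$. I do not anticipate any deep ingredient; the obstacle is to track the uniform shift of Galois-group index by $a$ and its compatibility with the projections, especially for small~$n$.
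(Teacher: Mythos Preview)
Your proposal is correct and follows essentially the same route as the paper. The paper likewise reduces everything to the subgroup of $\ZZ/p^n\ZZ$ generated by the (constant unit times the) weights of cycles in $\tilde X$, identifies this subgroup as the stabilizer of the $\ZZ/p^n\ZZ$-action on connected components, and then reads off connectedness, the component count $p^{n_0}$ (your $p^a$), and the tower structure via orbit--stabilizer; uniqueness is proved by exactly your map $(v,g)\mapsto(v,ug)$. The only cosmetic differences are that the paper phrases things in terms of cycles rather than closed walks from the outset and outsources two steps to the literature (the connectedness criterion for derived graphs to Gonet's thesis, and the fact that the preimages of a fixed component assemble into a $\Zp$-tower to an earlier paper of the authors), whereas you sketch both directly.
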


In particular, given a finite connected directed graph that contains a cycle, Theorem~\ref{thmA} tells us that there is a \textit{canonical} $\Zp$-tower over $X$, to which we shall refer as the \textit{\textbf{constant $\Zp$-tower}} over $X$. After applying the forgetful map, we can regard this as a $\Zp$-tower of undirected graphs. The Iwasawa invariants of this tower as given in \eqref{eq:Iw} will be denoted by $\mu(X)$ and $\lambda(X)$. We call them the $\mu$-invariant and the $\lambda$-invariant of $X$.

The following theorem gives sufficient conditions for the (non-)vanishing of the $\mu$-invariant of a constant $\Zp$-tower.

\begin{lthm}[Theorems~\ref{thm:mu>0} and \ref{thm:mu=0}]\label{thmB}
     Let $X$ be a finite connected directed graph that contains a cycle.
     \begin{itemize}
         \item[(a)] Suppose that $p$ divides the in-degree and out-degree of each vertex of $X$, then $\mu(X)>0$.
         \item[(b)] Suppose that the total degree of each vertex (i.e. the sum of the in-degree and out-degree) is equal to some constant $k$ that is coprime to $p$. If the adjacency matrix of $X$ is normal, then $\mu(X)=0$.
     \end{itemize}
\end{lthm}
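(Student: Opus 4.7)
My plan leverages an explicit description of the undirected Laplacian of $X_n$. By Theorem~\ref{thmA}, we may take the constant voltage to be $1$ up to isomorphism, so that
\[
L_n = D \otimes I_{p^n} - A \otimes P - A^T \otimes P^{-1},
\]
where $A$ and $D$ denote the adjacency and total-degree matrices of $X$, and $P$ is the cyclic shift matrix of order $p^n$. By the matrix--tree theorem, $\kappa_n = \det L_n'$ for a reduced Laplacian $L_n'$, and both parts reduce to studying $L_n$ modulo $p$.

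For part (a), the divisibility hypotheses translate into $D \equiv 0$, $A\mathbf{1} \equiv 0$, and $A^T\mathbf{1} \equiv 0 \pmod p$, so the displayed identity yields $L_n(\mathbf{1} \otimes w) \equiv 0 \pmod p$ for every $w \in \FF_p^{p^n}$. Hence $\dim \ker(L_n \bmod p) \geq p^n$; passing to $L_n'$ by deleting a row and column drops this bound by at most $1$, and the Smith normal form then gives $\ord_p \kappa_n \geq p^n - 1$. Comparing with the asymptotic $\ord_p \kappa_n = \mu p^n + \lambda n + \nu$ for $n \gg 0$ forces $\mu \geq 1$, hence $\mu > 0$.

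For part (b), I would analyse the characteristic polynomial
\[
h(T) := \det\bigl((1+T)kI - (1+T)^2 A - A^T\bigr) \in \ZZ[T],
\]
which arises from $\det L_n$ via Fourier decomposition along the characters of $\ZZ/p^n\ZZ$ acting as $\zeta \leftrightarrow 1+T$, and whose $\mu$-invariant (as an element of $\Lambda = \Zp\lb T \rb$) agrees with $\mu(X)$. Since $A$ is normal, $\bar A$ and $\bar A^T$ commute over $\FF_p$ and hence can be simultaneously upper-triangularised over $\overline{\FF_p}$; writing $(\alpha_i, \beta_i)$ for the paired diagonal entries, one obtains
\[
\bar h(T) = \prod_{i=1}^{|V(X)|} \bigl((k - \alpha_i - \beta_i) + (k - 2\alpha_i)T - \alpha_i T^2\bigr).
\]
A single factor vanishes identically only if $\alpha_i = 0$ and $k - 2\alpha_i = k \equiv 0 \pmod p$, contradicting $\gcd(k,p) = 1$. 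Since $\overline{\FF_p}[T]$ is an integral domain, $\bar h(T) \not\equiv 0$, whence $\mu(h) = 0$ and therefore $\mu(X) = 0$.

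I expect the main technical obstacle to be the rigorous identification $\mu(X) = \mu(h)$. This should follow from the product formula
\[
\ord_p \kappa_n = \ord_p \kappa_0 - n + \sum_{\substack{\zeta^{p^n}=1\\ \zeta \neq 1}} \ord_p h(\zeta - 1)
\]
together with Weierstrass preparation in $\Lambda$, both of which should already be present in the paper's general framework for constant $\Zp$-towers.
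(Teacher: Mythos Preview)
Your argument is correct in both parts, but the route for (b) is genuinely different from the paper's.

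For part (a), the paper works directly with the power series $M_X(T)=\det(D-A(1+T)-A^t(1+T)^{-1})$: summing the columns produces entries $d_i(v)+d_o(v)-d_o(v)(1+T)-d_i(v)(1+T)^{-1}$, all divisible by $p$, so $p\mid M_X(T)$ and Proposition~\ref{prop:disconnected} gives $\mu(X)>0$. Your finite-level version (the $\mathbf{1}\otimes w$ kernel vectors) is the same linear-algebra observation transported to each $L_n$; it is correct, though you should note that when $n_0>0$ the graph $X_n$ is disconnected, so one must pass to a connected component $\hat X_n$ before invoking the matrix--tree theorem. The block structure (all components isomorphic) makes this harmless, and the paper's $M_X(T)$ formulation sidesteps the issue entirely.

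For part (b), the paper diagonalises $A$ unitarily over $\CC$, writes $M_X(T)=\prod_i\bigl(k-\alpha_i(1+T)-\overline{\alpha}_i(1+T)^{-1}\bigr)$, and then runs a short $p$-adic case analysis on each factor inside $\Qp(\alpha_1,\dots,\alpha_r)$, using $v_p(k)=0$ to force at least one coefficient to be a unit. Your approach instead reduces modulo $p$ first: since $AA^T=A^TA$ over $\ZZ$, the reductions $\bar A,\bar A^T$ commute and can be simultaneously upper-triangularised over $\overline{\FF_p}$, yielding $\bar h(T)=\prod_i\bigl((k-\alpha_i-\beta_i)+(k-2\alpha_i)T-\alpha_i T^2\bigr)$, and each factor is a nonzero polynomial because the $T$- and $T^2$-coefficients cannot vanish simultaneously when $k\not\equiv0$. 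This is more elementary than the paper's argument, as it stays entirely in positive characteristic and avoids the somewhat delicate passage from complex eigenvalues to $p$-adic valuations; it also does not use the pairing $\beta_i=\overline{\alpha}_i$ at all. One small correction: your $h(T)$ equals $(1+T)^{|\VV(X)|}M_X(T)$, so what you are proving is $\mu(M_X)=0$; by Proposition~\ref{prop:disconnected} this equals $p^{n_0}\mu(X)$ rather than $\mu(X)$ itself, but of course the conclusion $\mu(X)=0$ is unaffected.
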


Let $X^{\ss}$ be the supersingular subgraph of $X_l^q(Np)$ given by Definition~\ref{def:intro} and define $X^{\ss}_n$ as its pre-image in $X_l^q(Np^n)$. We recall from \cite[Corollary~4.7]{LM2} that there exists a {minimal} integer $m_0$ such that the number of connected components of $X_n^\ss$ stabilizes. Let $Z_{m_0}$ be a connected component of $X^{\ss}_{m_0}$. For $n\ge m_0$, let $Z_n$ be the pre-image of $Z_{m_0}$ in $X_n$, then $Z_n/Z_{m_0}$ is Galois. Furthermore, the inverse limit $G:=\displaystyle\varprojlim_{n\ge m_0} \Gal(Z_n/Z_{m_0})$ is an open subgroup of $\GL_2(\Zp)$.


Let \[H_n=\left\{x\in \textup{SL}_2(\ZZ/p^nN\ZZ)\mid x\equiv I \pmod {Np}\right\}\] and $Z'_n=(Z_n)^{H_n}$. We prove that $(Z'_n)_{n\ge m_0}$ is a constant $\Z_p$-tower over $Z'_{m_0}$. Furthermore, it can be realized as an intermediate tower of graph coverings of $(Z_n)_{n\ge m_0}$. This allows us to formulate a $\mathfrak{M}_H(G)$-conjecture for $(Z_n)_{n\ge m_0}$ analogously to the counterpart in non-commutative Iwasawa theory as studied in \cite{CFKSV} (see Section \ref{S:isogeny} for details). This conjecture is of interest since it implies that the $K$-theoretic formulation of the non-commutative Iwasawa main conjecture is true for the tower $(Z_n)_{n\ge m_0}$ (as demonstrated in \cite{KM}). In the present article, we prove that the $\mathfrak{M}_H(G)$-conjecture holds for $(Z_n)_{n\ge m_0}$:

\begin{corl}[Corollaries~\ref{cor:isogeny} and \ref{cor:isogeny-regular}]\label{corC}
Assume that $p>2$.
    Let $(Z'_n)_{n\ge m_0}$ be the constant $\Z_p$-tower over $Z'_{m_0}$. Then $\mu(Z'_{m_0})=0$. Furthermore, the $\mathfrak{M}_H(G)$-conjecture holds for the tower $(Z_n)_{n\ge m_0}$. If in addition, the number of spanning trees of $Z'_{m_0}$ {and the number of vertices of $Z'_{m_0}$} is not divisible by $p$, then $\lambda(Z'_{m_0})=1$. 
\end{corl}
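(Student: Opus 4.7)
The proof proceeds in three steps.

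For the vanishing $\mu(Z'_{m_0})=0$, I would apply Theorem~\ref{thmB}(b). Each vertex of $X_l^q(Np^n)$ has out-degree $\ell+1$ (one $\ell$-isogeny per cyclic subgroup of order $\ell$ in $E$) and in-degree $\ell+1$ by dualization, and this regularity descends through the quotients producing $Z'_{m_0}$, so each vertex of $Z'_{m_0}$ has total directed degree $k=2(\ell+1)\equiv 4\pmod p$, which is coprime to $p$ because $p>2$. Normality of the adjacency matrix can be obtained from the Hecke-operator interpretation: the adjacency matrix of $Z'_{m_0}$ encodes (up to transposition) the Hecke correspondence $T_\ell$ on the space of supersingular divisors with $(Np)$-level structure, which is a normal operator. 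Theorem~\ref{thmB}(b) then gives $\mu(Z'_{m_0})=0$. The $\mathfrak{M}_H(G)$-conjecture for $(Z_n)_{n\ge m_0}$ follows immediately: the $\Zp$-tower $(Z'_n)$ plays the role of the cyclotomic intermediate extension inside the non-commutative $G$-tower $(Z_n)$, and the vanishing of its $\mu$-invariant is the standard sufficient condition, parallel to \cite{CFKSV} in the number-field setting, for the relevant Iwasawa module to lie in $\mathfrak{M}_H(G)$.

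For the computation $\lambda(Z'_{m_0})=1$, I would analyze the characteristic power series of the constant $\Zp$-tower,
\[
f(T)=\det\bigl((1+T)kI-(1+T)^2A^+-(A^+)^T\bigr)\in\Zp\lb T\rb,
\]
where $A^+$ is the directed adjacency matrix of $Z'_{m_0}$. Via the Artin-type factorization $\kappa_n=\kappa(Z'_{m_0})\cdot p^{-(n-m_0)}\prod_{\zeta\ne 1}f(\zeta-1)$, the Iwasawa invariants of the tower satisfy $\lambda(Z'_{m_0})=\lambda_f-1$. A direct calculation shows $f(0)=\det L=0$ by connectedness; $f'(0)=\kappa(Z'_{m_0})\cdot(k|V|-2|E^+|)=0$ by Jacobi's formula together with the identity $2|E^+|=k|V|$; and $f''(0)/2=-\tfrac{k}{2}\kappa(Z'_{m_0})\cdot|V(Z'_{m_0})|$ via second-order perturbation of the zero eigenvalue, whose off-diagonal cross term vanishes thanks to the balance $d^+=d^-$ at every vertex. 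The hypotheses $p\nmid\kappa(Z'_{m_0})$, $p\nmid|V(Z'_{m_0})|$ and $p\nmid\ell+1$ (the last using $p>2$) then make $f''(0)/2$ a $p$-adic unit, yielding $\lambda_f=2$ and hence $\lambda(Z'_{m_0})=1$.

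The main obstacle is the normality claim in Step~1, which requires care to match the Hecke-operator interpretation against the specific quotient structure defining $Z'_{m_0}$. Once this is settled, the second-order perturbation identification of $f''(0)$ in Step~3 is a fairly routine calculation, given the balance $d^+=d^-$.
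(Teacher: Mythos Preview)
Your proposal is essentially correct, but you are making the normality step harder than it needs to be. Since $\ell\equiv 1\pmod{Np}$, the dual isogeny $\hat\phi$ sends the level structure on $E'$ back to the original level structure on $E$ (because $\hat\phi\circ\phi=[\ell]$ acts trivially on $E[Np]$). Hence every directed edge in the relevant isogeny graph is matched by an edge in the opposite direction, and the adjacency matrix of $Z'_{m_0}$ is \emph{symmetric}, hence normal. This is exactly the paper's argument, and it dissolves what you flagged as the ``main obstacle''; no Hecke-operator interpretation is needed.

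For the $\lambda$-computation, your approach and the paper's are two packagings of the same calculation. The paper proves a general statement (Theorem~\ref{thmD}) for balanced graphs by adding all columns of $D-A(1+T)-A^t(1+T)^{-1}$ to extract the factor $2-(1+T)-(1+T)^{-1}=-T^2+O(T^3)$, and then identifies the remaining cofactor at $T=0$ with $k\kappa_X$ via Kirchhoff's formula. Your perturbation argument achieves the same thing: regularity plus balance force $B'(0)\mathbf{1}=0$ (indeed $(kI-2A^+)\mathbf{1}=0$ since every out-degree equals $k/2$), which kills the second-order cross term and leaves $\lambda_0''(0)=\mathbf{1}^TB''(0)\mathbf{1}/|V|=-k$, whence $f''(0)/2=-\tfrac{k}{2}|V|\kappa$. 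Note that you are implicitly using regularity (writing $D=kI$) in addition to balance; balance alone would not give $A^+\mathbf{1}$ constant. Either route yields $\mu=0$ and $\lambda=1$ under the stated hypotheses. The paper's version has the advantage of being stated once for all balanced graphs; yours has the advantage of making the role of the zero eigenvalue of the Laplacian explicit.
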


Theorem \ref{thmB} only proves the vanishing of $\mu(X)$ for graphs where the degree of each vertex is equal to the same constant $k$. Under mild additional assumptions, we prove that the $\mu$-invariant  vanishes and the $\lambda$-invariant is one for balanced directed graphs (i.e. for each vertex, the in-degree and out-degree are equal).
\begin{lthm}[Theorem~\ref{thm:regular-balanced}]\label{thmD}
    Let $X$ be a finite directed and balanced graph. Suppose that $p>2$ or ${\tilde{X}}$ contains a cycle whose {weight} is coprime to $p$. Let $k$ be the sum of the in-degree (or equivalently out-degree) of all the vertices of $X$. Let $\kappa_X$ denote the number of spanning trees in the image of $X$ under the forgetful map. If $p\not \mid k\kappa_X$, then $\mu(X)=0$ and $\lambda(X)=1$.
\end{lthm}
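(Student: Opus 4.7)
The plan is to apply Weierstrass preparation to the Iwasawa polynomial attached to the constant $\Zp$-tower $(X_n)_{n \geq 0}$. Working with the equivariant Laplacian of the constant voltage assignment, one obtains
\[
\tilde{Q}(T) := \det\bigl((1+T)D - (1+T)^{2} A^{+} - (A^{+})^{\top}\bigr) \in \Zp[T],
\]
where $A^{+}$ is the adjacency matrix of $X$ viewed as a directed graph and $D$ is the diagonal matrix of total degrees. A block-Fourier diagonalisation of $L(X_n) = I\otimes D - C\otimes A^{+} - C^{-1}\otimes (A^{+})^{\top}$ (with $C$ the $p^n$-cyclic shift) combined with Kirchhoff's theorem yields
\[
\kappa_n = \frac{\kappa_X}{p^n}\prod_{\substack{\zeta^{p^n}=1 \\ \zeta \neq 1}}\det L^{\zeta},\qquad L^{\zeta}:= D - \zeta A^{+} - \zeta^{-1}(A^{+})^{\top},
\]
and $\det L^{\zeta} = \zeta^{-|V(X)|}\tilde{Q}(\zeta - 1)$. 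Standard Iwasawa bookkeeping (using $\sum_{\zeta \neq 1,\, \zeta^{p^n}=1}\ord_p(\zeta-1) = n$) then translates the Weierstrass invariants of $\tilde{Q}$ into those of the tower as $\mu(X) = \mu(\tilde{Q})$ and $\lambda(X) = \lambda(\tilde{Q}) - 1$, the shift reflecting the factor of $T$ that necessarily divides $\tilde{Q}$. I would therefore reduce the statement to showing $\tilde{Q}(T) = -k\kappa_X \, T^{2} + O(T^{3})$ with $-k\kappa_X \in \Zp^{\times}$.

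Expansion gives $\tilde{Q}(T) = \det\bigl(L(X) + TM_1 + T^{2} M_2\bigr)$ with $M_1 = D - 2A^{+}$ and $M_2 = -A^{+}$. The constant coefficient is $\det L(X) = 0$ by connectedness. For the linear coefficient, Jacobi's formula combined with the Kirchhoff identity $\mathrm{adj}(L(X)) = \kappa_X\mathbf{1}\mathbf{1}^{\top}$ gives $[T^{1}]\tilde{Q}(T) = \kappa_X \,\mathbf{1}^{\top} M_1 \mathbf{1}$, which vanishes since $\mathbf{1}^{\top} M_1 \mathbf{1} = 2|E(X)| - 2|E(X)| = 0$ irrespective of balancedness. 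For the quadratic coefficient, second-order perturbation of the simple zero eigenvalue of $L(X)$ along the unit vector $v = \mathbf{1}/\sqrt{|V(X)|}$ yields
\[
[T^{2}]\tilde{Q}(T) = \kappa_X\,\mathbf{1}^{\top} M_2 \mathbf{1} - \kappa_X\,\mathbf{1}^{\top} M_1\, L(X)^{+} M_1 \mathbf{1},
\]
where $L(X)^{+}$ is the Moore--Penrose pseudoinverse. Here the balanced hypothesis is crucial: $(M_1 \mathbf{1})_u = \deg^{-}(u) - \deg^{+}(u) = 0$ forces the pseudoinverse contribution to vanish identically, leaving $[T^{2}]\tilde{Q}(T) = \kappa_X\,\mathbf{1}^{\top}(-A^{+})\mathbf{1} = -k\kappa_X$.

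Under $p \nmid k\kappa_X$ the coefficient $-k\kappa_X$ is a unit in $\Zp$, so Weierstrass preparation gives $\tilde{Q}(T) = U(T) P(T)$ with $U \in \Zp[[T]]^{\times}$ and $P$ distinguished of degree exactly $2$, hence $\mu(\tilde{Q}) = 0$ and $\lambda(\tilde{Q}) = 2$, giving $\mu(X) = 0$ and $\lambda(X) = 1$. The hypothesis that $p > 2$ or $\tilde{X}$ contains a cycle of weight coprime to $p$ is precisely the criterion from Theorem~\ref{thmA} ensuring $(X_n)$ is a genuine $\Zp$-tower so that $\mu(X)$ and $\lambda(X)$ are defined. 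The main technical subtlety is the strengthened vanishing $M_1 \mathbf{1} = 0$ (as opposed to merely $\mathbf{1}^{\top} M_1 \mathbf{1} = 0$) supplied by balancedness; this is what kills the otherwise awkward pseudoinverse term at second order, after which the computation collapses to the identity $\mathbf{1}^{\top} A^{+} \mathbf{1} = k$.
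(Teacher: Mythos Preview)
Your core computation---that $\tilde Q(T)$ (equivalently $M_X(T)$, since $\tilde Q=(1+T)^{|\VV(X)|}M_X$) has vanishing constant and linear terms with $[T^2]=-k\kappa_X$---is correct, and the spectral-perturbation approach via Jacobi's formula and the reduced resolvent is a legitimate alternative to the paper's method. The paper instead adds all columns of $D-A(1+T)-A^t(1+T)^{-1}$ to the first; balancedness makes that column $d_m\bigl(2-(1+T)-(1+T)^{-1}\bigr)$, so $(-T^2+T^3-\cdots)$ factors out and the remaining cofactor-expansion at $T=0$ collapses, via Kirchhoff's identity $\det L_{i1}=(-1)^{i+1}\kappa_X$, to $\sum_m d_m\kappa_X=k\kappa_X$. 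Your route trades this explicit manipulation for the observation $M_1\mathbf 1=0$ under balancedness, killing the pseudoinverse term; the paper's route is more elementary (no perturbation theory) and transparently algebraic over $\Zp$.

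There is, however, a genuine gap in your final step. Your assertion that ``$p>2$ or $\tilde X$ contains a cycle of weight coprime to $p$'' is ``precisely the criterion from Theorem~A ensuring $(X_n)$ is a genuine $\Zp$-tower'' is wrong: Theorem~A(a) gives connectedness of all $X_n$ if and only if $\tilde X$ has a cycle of weight coprime to $p$; the condition $p>2$ alone does \emph{not} guarantee this. When $n_0>0$, your formula $\kappa_n=\frac{\kappa_X}{p^n}\prod_{\zeta\neq 1}\det L^\zeta$ returns $0$ (some $L^\zeta$ is singular), so the ``standard Iwasawa bookkeeping'' producing $\mu(X)=\mu(\tilde Q)$ and $\lambda(X)=\lambda(\tilde Q)-1$ breaks down. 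The paper closes this gap via Proposition~3.11: one has $M_X(T)=\bigl(p^{\mu(X)}Tg(T)\bigr)^{p^{n_0}}u(T)$, and since you have already shown $M_X$ has $\mu=0$ and $\lambda=2$, the exponent $p^{n_0}$ must divide $2$, forcing $n_0=0$ when $p>2$. You should replace your connectedness claim with this argument.
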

The last assertion of Corollary~\ref{corC} is proved using Theorem~\ref{thmD}.

Readers familiar with the classical Iwasawa theory of ideal class groups of number fields may find it interesting that the condition $p\not\mid k\kappa_X$ resembles the definition of a regular prime. Indeed, $p$ is said to be regular if it does not divide the class number of $\QQ(\mu_p)$. When $p$ is a regular prime, the Iwasawa invariants of the cyclotomic $\Zp$-extension of $\QQ(\mu_p)$ are trivial. The $\lambda$-invariant of a balanced graph  {$X$ such that $\tilde{X}$ contains a cycle of weight} coprime to $p$ is at least one (see Remark~\ref{rk:lambda1}). Therefore, the analogue of the regularity condition in the context of graph coverings (that is, $p\nmid k\kappa_X$) guarantees that the Iwasawa invariants are minimum.

The last family of graphs we study in this article are the so-called volcano graphs, which are neither regular nor balanced. We study their constant $\Zp$-tower coverings and compute the corresponding Iwasawa invariants. These graphs are used to describe the isogeny graphs defined by ordinary elliptic curves, as shown in \cite{kohel}. 

\begin{lthm}[Corollaries~\ref{cor:volcano} and \ref{cor:augmented}]\label{thmE}
    Let $X$ be a finite volcano graph. 
    \begin{itemize}
        \item[(a)] If the crater of $X$ is a cycle, then $\mu(X)=0$ and $\lambda(X)=1$.
        \item[(b)] If the crater of $X$ is a single vertex together with two loops, then $\mu(X)=v_p(2)$ and $\lambda(X)=1$. 
    \end{itemize}
\end{lthm}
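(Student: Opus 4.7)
My plan is to reduce each case to the \emph{crater} of $X$ via a \emph{pruning} argument, and then compute $\kappa_n$ directly for the crater.

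First, I would establish a pruning lemma: attaching a finite directed tree $T$ to a graph at some vertex $v$ does not change $\kappa_n$ at any level of the constant $\Zp$-tower. By induction on $|E(T)|$ it suffices to treat a single pendant edge adjacent to $v$. In the derived graph (built from a constant voltage $\alpha = 1$), each newly adjoined vertex $(w,i)$ has degree one in the undirected image, being connected only to $(v, i\pm1)$ depending on the orientation of the pendant edge. Consequently every spanning tree of the enlarged derived graph must contain all $p^n$ pendant edges, yielding a bijection with spanning trees of the unperturbed derived graph. Iterating this pruning from the leaves of the volcano inward reduces $X$ to its crater without changing $\kappa_n$.

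Next, I would compute the Iwasawa invariants directly for each crater type. For part (a), the crater is a cycle $C_c$ with consistent orientation; its derived graph under constant voltage $1$ decomposes (after the forgetful map) into $\gcd(c, p^n)$ disjoint undirected cycles, each of length $\mathrm{lcm}(c, p^n)$. Writing $c = p^a c'$ with $\gcd(c', p) = 1$, a single connected component (which is the object to which $\mu$ and $\lambda$ refer by Theorem~\ref{thmA}(b)) is a cycle on $c'p^n$ vertices with $\kappa_n = c' p^n$, so $\ord_p(\kappa_n) = n$ for $n \ge a$, giving $\mu(X) = 0$ and $\lambda(X) = 1$. For part (b), the crater is a single vertex with two loops; the derived graph is connected and its undirected image is $C_{p^n}$ with each edge doubled. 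The matrix-tree theorem applied to this circulant multigraph (whose Laplacian eigenvalues are $4 - 4\cos(2\pi k/p^n)$ for $k = 0, \dots, p^n-1$) gives $\kappa_n = p^n \cdot 2^{p^n - 1}$, hence $\ord_p(\kappa_n) = v_p(2) \, p^n + n - v_p(2)$, so $\mu(X) = v_p(2)$ and $\lambda(X) = 1$.

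The main obstacle is the careful formulation and justification of the pruning lemma in the directed-graph setting with a voltage assignment, especially verifying that adjoined tree vertices become genuine pendants in the undirected image of the derived graph regardless of the orientations chosen for the tree edges, and simultaneously tracking the (possibly disconnected) tower in accordance with Theorem~\ref{thmA}(b) when $p \mid c$. Once this reduction is in place, parts (a) and (b) follow from the standard spanning-tree counts for cycles and for circulant multigraphs.
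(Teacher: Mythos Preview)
Your proposal is correct and follows essentially the same approach as the paper. Both arguments reduce to the crater by the observation that the tree-like part of the volcano contributes nothing to the spanning-tree count (the paper phrases this as ``$\hat X_n$ is again a volcano and its spanning trees are obtained by deleting one crater edge,'' whereas you package it as a pruning lemma), and then count spanning trees of the derived graph of the crater directly; your circulant-eigenvalue computation for part~(b) recovers the paper's asserted value $\kappa_n = p^n\cdot 2^{p^n-1}$.
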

The reader is referred to \S\ref{S:volcano} for a reminder of the definitions of a volcano graph and its crater. We remark that the proof of Theorem~\ref{thmE} relies on an explicit count of spanning trees in a constant $\Zp$-tower, rather than relying on Theorem~\ref{thmD}. Nonetheless, Theorem~\ref{thmD} can be applied to certain isogeny graphs of ordinary elliptic curves, which are closely related to volcano graphs; see Remark~\ref{rk:ordinary} for a detailed discussion.

\section*{Acknowledgements}
The research of AL is supported by the NSERC Discovery Grants Program RGPIN-2020-04259 and RGPAS-2020-00096. The authors thank Andreas Nickel, Mateja Šajna and Daniel Vallières for helpful and fruitful discussions on topics related to this article. 

\section{Notions in graph theory}

We recall notions in graph theory that will be used throughout the article. Given a (directed or undirected) graph $X$ (multiple edges and loops are allowed), we write $\VV(X)$ and $\EE(X)$ for the set of vertices and edges of $X$, respectively. 
If $X$ is a directed graph, given $v\in \VV(X)$, we write $d_o(v)$ and $d_i(v)$ for the out-degree and the in-degree of $v$, respectively. In the case of undirected graph, we write $\deg(v)$ for the degree of a vertex.

\begin{defn}
    Let $X$ be an undirected graph (finite or infinite) such that the degree of every vertex is finite. For every vertex $v$ we define the \textbf{principal divisor}
    \[P_v=\deg(v)v-\sum_{e\in \EE_v}t(e),\]
    where $\EE_v$ is the set of edges adjacent to $v$ and $t(e)$ is the second vertex reached by the edge $e$.

    We define the \textbf{divisor group} of $X$ by
    \[
        \textup{Div}(X)=\left\{\sum_{v\in \VV(X)}a_v v:a_v\in \ZZ, a_v=0 \textup{ for all but finitely many $v$}\right\}.\]
        The group of principal divisors is the subgroup of $\Div(X)$ generated by $P_v$ as $v$ runs through $v\in\VV(X)$: 
        \[\textup{Pr}(X)=\langle P_v: v\in \VV(X)\rangle \subseteq \textup{Div}(X).\]
        The \textbf{Picard group} of $X$ is defined as the quotient
       \[ \textup{Pic}(X)=\textup{Div}(X)/\textup{Pr}(X).\]
       
       If $X$ is a directed graph, we define these objects correspondingly after applying the forgetful map to $X$.
\end{defn}
We recall that the number of spanning trees in a finite connected undirected graph is equal to the cardinality of the torsion subgroups of the Picard group.

\begin{defn}
    If $Y/X$ is a covering of directed graphs with the projection map $\pi:Y\to X$, the group of \textbf{deck transformations} of $Y/X$, denoted by $\Deck(Y/X)$, is the group of graph automorphisms $\sigma:Y\to Y$ such that $\pi\circ\sigma=\pi$. %
    
    We say that $Y/X$ is \textbf{$d$-sheeted} if $d$ is a positive integer such that each element of $V(X)$ has $d$ pre-images in $V(Y)$.
    
Finally, We say that the covering $Y/X$ is \textbf{Galois}  if it is a $d$-sheeted covering and $\vert \textup{Deck}(Y/X)\vert =d$.
\end{defn}

Galois coverings can be constructed using voltage assignments, which we recall below.

\begin{defn}
    Let $X$ be a directed graph and $(G,\cdot)$ a group. A $G$-valued \textbf{voltage assignment} on $X$ is a function $\alpha:\EE(X)\rightarrow G$. 

    Given a $G$-valued voltage assignment on $X$, we define the derived graph $X(G,\alpha)$ whose vertices and edges are given by $\VV(X)\times G$ and $\EE(X)\times G$, respectively. {If $e\in\EE(X)$ is an edge of $X$ going from $s$ to $t$, then the edge $(e,\sigma)\in\EE(X(G,\alpha))= \EE(X)\times G$ goes from $(s,\sigma)$ to $(t,\sigma\cdot\alpha(e))$.}

\end{defn}

The derived graph $X(G,\alpha)$ naturally gives rise to a covering of $X$ (defined by $Y\rightarrow X$, $(x,\sigma)\mapsto x$).

\begin{lemma}\label{lem:iso}
    Let $X$ be a directed graph and $(G,\cdot)$ a group. Let $\Phi$ be an automorphism on $G$ and $\alpha$ a $G$-valued voltage assignment on $X$. Then $X(G,\alpha)$ and $X(G,\Phi\circ\alpha)$ are isomorphic direct graphs.
\end{lemma}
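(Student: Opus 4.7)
The plan is to exhibit an explicit isomorphism $F\colon X(G,\alpha)\to X(G,\Phi\circ\alpha)$ induced by applying $\Phi$ in the second coordinate. On vertices, I set $F(v,\sigma)=(v,\Phi(\sigma))$ for $(v,\sigma)\in \VV(X)\times G$, and on edges I set $F(e,\sigma)=(e,\Phi(\sigma))$ for $(e,\sigma)\in \EE(X)\times G$. Since $\Phi$ is a bijection on $G$, the map $F$ is a bijection on both vertex sets and edge sets.

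The remaining step is to check compatibility with source and target maps, i.e.\ that $F$ is actually a morphism of directed graphs. Suppose $e\in\EE(X)$ goes from $s$ to $t$. By definition, the edge $(e,\sigma)\in\EE(X(G,\alpha))$ goes from $(s,\sigma)$ to $(t,\sigma\cdot\alpha(e))$. Applying $F$, its image is the edge $(e,\Phi(\sigma))$ whose source should be $(s,\Phi(\sigma))$ and whose target should be $(t,\Phi(\sigma\cdot\alpha(e)))=(t,\Phi(\sigma)\cdot\Phi(\alpha(e)))$, using that $\Phi$ is a group homomorphism. On the other hand, in the derived graph $X(G,\Phi\circ\alpha)$, the edge $(e,\Phi(\sigma))$ is defined to go from $(s,\Phi(\sigma))$ to $(t,\Phi(\sigma)\cdot(\Phi\circ\alpha)(e))=(t,\Phi(\sigma)\cdot\Phi(\alpha(e)))$. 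These agree, so $F$ is indeed an isomorphism of directed graphs.

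There is no real obstacle here; the statement is essentially the observation that changing a voltage assignment by an automorphism of the voltage group only relabels the fibers of the derived covering. The only thing to keep track of is that $\Phi$ being a group homomorphism is precisely what makes the target calculation commute, and this is used in the single identity $\Phi(\sigma\cdot\alpha(e))=\Phi(\sigma)\cdot\Phi(\alpha(e))$.
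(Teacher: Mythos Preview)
Your proof is correct and is precisely the approach taken in the paper: define the map $(x,\sigma)\mapsto(x,\Phi(\sigma))$ and verify it is an isomorphism of directed graphs. The paper simply calls this a ``direct computation,'' whereas you have written out the source/target check explicitly.
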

\begin{proof}
    The automorphism $\Phi$ defines a natural map $X(G,\alpha)\rightarrow X(G,\Phi\circ\alpha) $ given by $(x,\sigma)\mapsto (x,\Phi(\sigma))$. A direct computation shows that this is an isomorphism of graphs.
\end{proof}

\begin{remark}
Suppose that $\alpha$ is a $G$-valued voltage assignment on a finite connected directed graph $X$ and the derived graph $X(G,\alpha)$ is connected, then $Y/X$ is Galois, whose Galois group is isomorphic to $G$ (see \cite[Theorem~2.10]{gonet22} and \cite[\S2.3]{gonet-thesis}).    
\end{remark}

\begin{defn}
    Let $X$ be a directed graph. A \textbf{$\Zp$-covering} of $X$, or a \textbf{$\Zp$-tower} over $X$, is a sequence of graph coverings
\[
X=X_0\leftarrow X_1\leftarrow X_2\leftarrow \cdots \leftarrow X_n\leftarrow\cdots
\]
such that for each $n\ge0$, the cover $X_n/X$ is Galois with Galois group isomorphic to $\ZZ/p^n\ZZ$.
\end{defn}

\begin{remark}\label{rk:undirected}
    In \cite{gonet-thesis,gonet22,vallieres,vallieres2,vallieres3}, a $\Zp$-covering of an undirected graph $X$ is constructed as follows. Let $\tilde X$ be the directed graph such that $\VV(\tilde X)=\VV(X)$ and we define $\EE(\tilde X)$ to be the set of edges obtained from assigning each edge $e$ of $X$ to two oriented edges $e_1,e_2$, joining the same vertices as $e$, with $e_1$ and $e_2$ going in opposite directions. Let $\alpha$ be a voltage assignment on $\tilde X$ such that $\alpha(e_1)=\alpha(e_2)^{-1}$ for all $e\in\EE(X)$. In the derived graph $\tilde X(G,\alpha)$, the edges come in pairs, going into opposite directions. We obtain an undirected graph   after identifying such two edges with a single undirected edge.
    
    Alternatively, we can obtain a directed graph  $X'$ by assigning each edge of $X$ a direction and define a $G$-valued voltage assignment $\alpha'$ on $X'$. We obtain an undirected graph after applying the forgetful to $X'(G,\alpha')$.

    A $\Zp$-valued voltage assignment results in $\ZZ/p^n\ZZ$-valued assignments for all integer $n\ge0$. The procedures described above give rise to a sequence of coverings of undirected graphs 
    $$X\leftarrow X_1\leftarrow X_2\leftarrow\cdots X_n\leftarrow \cdots.$$
    If each $X_n$ is connected, this is a $\Zp$-tower over $X$. We refer the reader to \cite[Corollary~2.16]{DLRV} for a criterion of the connectedness condition.

    We can calculate the $\mu$- and $\lambda$-invariants in \eqref{eq:Iw} by an explicit power series. Let $\{v_1,\dots v_r\}$ be the set of vertices of $X$. Let $\alpha$ be a $\Zp$-valued assignment on $\tilde X$. Define
    \[
    M_{X,\alpha}(T):=\det(D-A_\alpha)\in\Zp[[T]]
    \]
    where $D$ denotes the valency matrix of $X$ and $A_\alpha$ is the $r\times r$ matrix whose $(i,j)$-entry is given by $\sum_e (1+T)^{\alpha(e)}$, where the sum runs over all edges in $\tilde X$ that go from $v_i$ to $v_j$. When $T=0$, $A_\alpha$ becomes the adjacency matrix of $X$ and so $M_{X,\alpha}(0)$ is the determinant of the Laplacian of $X$, which is zero. In particular, $T$ divides $M_{X,\alpha}(T)$. The $p$-adic Weierstrass preparation theorem implies that
    $$
M_{X,\alpha}(T)=p^m Tg(T)u(T),
    $$
    for some integer $m\ge 0$, a distinguished polynomial $g(T)$  and $u(T)\in\Zp[[T]]^\times$. The invariants in \eqref{eq:Iw} satisfy $\mu=m$ and $\lambda=\deg(g)$.
\end{remark}

For the rest of the article, we will only consider directed graphs that are finite and connected.

\begin{lemma}\label{lem:connected}
      Let $X$ be a finite connected directed graph. Let $(X_n)_{n\ge0}$ be a $\Zp$-tower over $X$. Then $X_n$ is connected for all $n$.
\end{lemma}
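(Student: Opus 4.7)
The plan is to argue by contradiction, analyzing the structure of a disconnected Galois covering of $X$ with cyclic Galois group, and then using the tower structure to eliminate the one exceptional configuration that arises.

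Suppose that $Y = X_n$ is disconnected for some $n \ge 1$, with connected components $C_1, \ldots, C_k$ where $k \ge 2$; write $G = \Deck(Y/X) \cong \ZZ/p^n\ZZ$ and $\pi \colon Y \to X$. Since $X$ is connected and $\pi$ is a covering, each $C_i$ surjects onto $X$ and $\pi|_{C_i} \colon C_i \to X$ is itself a covering. The $G$-action on $Y$ is free and transitive on every fiber; given any two components $C_i, C_j$ and $v \in X$, a $g \in G$ sending a preimage of $v$ in $C_i$ to a preimage in $C_j$ must carry $C_i$ bijectively onto $C_j$. Hence $G$ acts transitively on $\{C_1, \ldots, C_k\}$, and because $G$ is abelian the stabilizer $H$ of each $C_i$ is the same subgroup, of order $|G|/k$. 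The $H$-action on $C_1$ is free and transitive on each fiber of $\pi|_{C_1}$, so $C_1/X$ is Galois with group $H$.

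Next, I count $\Deck(Y/X)$ directly: every $\sigma \in \Deck(Y/X)$ is determined by a permutation $\tau \in S_k$ of the components together with isomorphisms $\sigma|_{C_i} \colon C_i \to C_{\tau(i)}$ over $X$. Since the $C_i$'s are pairwise isomorphic as covers of $X$ (via the $G$-action), each space of such isomorphisms is nonempty and forms a torsor over $\Deck(C_{\tau(i)}/X) = H$, and conversely any such data extends to a deck transformation. This yields
\[
|\Deck(Y/X)| = k! \cdot |H|^k.
\]
The Galois condition $|\Deck(Y/X)| = |G| = k|H|$ then reduces to $(k-1)!\,|H|^{k-1} = 1$, which with $k \ge 2$ forces $k = 2$ and $|H| = 1$. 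In this case $Y = X \sqcup X$ and $G \cong \ZZ/2\ZZ$, so necessarily $p = 2$ and $n = 1$.

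Finally, I rule out this remaining possibility using the rest of the tower. If $X_1 = X \sqcup X$, then $X_2 \to X_1$ covers a disconnected graph, so $X_2$ has at least two connected components. But $X_2/X$ is Galois with cyclic group $\ZZ/4\ZZ$, and the analysis above forces a disconnected $X_2$ to be $X \sqcup X$ with deck group $\ZZ/2\ZZ$, contradicting $|\Gal(X_2/X)| = 4$. Hence $X_1$ (and therefore every $X_n$) is connected. The main obstacle is verifying the count $|\Deck(Y/X)| = k!\,|H|^k$, in particular that every permutation $\tau \in S_k$ is realized and that the set of isomorphisms $C_i \to C_{\tau(i)}$ is an $H$-torsor; once this is in place, the arithmetic constraint $(k-1)!\,|H|^{k-1} = 1$ pins down the unique disconnected configuration, and the tower eliminates it.
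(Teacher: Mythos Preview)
Your proof is correct and follows essentially the same strategy as the paper's: argue by contradiction, analyze the deck group of a disconnected Galois cover to force $p=2$, then use a higher layer of the tower to eliminate that case. The main difference is in the counting step. The paper only observes that the permutations of the $r$ isomorphic components give a subgroup of order $r!$ inside $\Deck(X_n/X)\cong\ZZ/p^n\ZZ$, which forces $r!\mid p^n$ and hence $r\le 2$, $p=2$; it then handles $n\ge 2$ separately by arguing that $\Deck(X_n/X)$ is not cyclic when $X_n=Y_n\sqcup Y_n$. Your exact count $|\Deck(Y/X)|=k!\,|H|^k$ (the wreath-product order) combined with $|G|=k|H|$ yields the sharper constraint $(k-1)!\,|H|^{k-1}=1$, which in one stroke pins down $k=2$, $|H|=1$, and hence $p^n=2$, i.e.\ $n=1$. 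This makes the endgame cleaner: applying the same constraint to $X_2$ immediately contradicts $|\Gal(X_2/X)|=4$. Both arguments rely on the same underlying facts (components are isomorphic via the transitive $G$-action, $G$ acts freely on fibers because it is abelian), so the approaches are really the same; yours is just a bit more explicit and avoids the somewhat imprecise identification $\Deck(X_n/X)\simeq\Deck(Y_n/X)\times\ZZ/2\ZZ$ that appears in the paper.
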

\begin{proof}
    Suppose $X_n$ contains $r$ connected components. It follows from \cite[proof of Corollary~2.5]{LM2} that the connected components are isomorphic to each other. In particular, $\Gal(X_n/X)$ admits a subgroup of order $r!$ (given by the permutations of the connected components). Since $\Gal(X_n/X)\simeq\ZZ/p^n\ZZ$, this is only possible if $p=r=2$. Suppose that $p=r=2$. Then $X_n\simeq Y_n\bigsqcup Y_n$, where $Y_n$ is a connected graph, and $Y_n/X$ is a graph covering of degree $2^{n-1}$. In particular, the group of Deck transformations of $X_n/X$ is isomorphic to $\Deck(Y_n/X)\times\ZZ/2\ZZ$, which is not cyclic whenever $n\ge2$. Thus, $r=1$. If $X_n$ is connected for $n\ge2$, then $X_1$ is also connected since $X_n$ is a covering of $X_1$.
\end{proof}

\section{Derived graphs arising from a constant voltage assignment}

The goal of this section is to prove Theorem~\ref{thmA}. We begin with the following definitions that will be used throughout the section.

\begin{defn}
Let $\alpha$ be a $\Zp$-valued voltage assignment on a graph $X$. Given an integer $n\ge1$, we write $\alpha_{/n}$ for the voltage assignment given by the composition of $\alpha$ with the projection map $\Zp\rightarrow \ZZ/p^n\ZZ$.

\end{defn}

\begin{defn}
Let $X$ be a finite connected directed graph. We say that a $\Zp$-tower $(X_n)_{n\ge0}$ over $X$ is a \textbf{constant $\Zp$-tower} if $X_n=X(\ZZ/p^n\ZZ,\alpha_{/n})$ for some constant $\Zp$-valued voltage assignment $\alpha$ on $X$ (i.e. $\alpha(e)$ takes the same value for all $e\in\EE(X)$). We shall say that the common value $\alpha(e)$ is the \textbf{parameter} attached to the cover. If no confusion may arise, we shall identify $\alpha$ with the common value of $\alpha(e)$.
\end{defn}

\begin{lemma}\label{lem:connected-unit}
    Let $X$ be a finite connected directed graph. Let $(X_n)_{n\ge0}$ be a constant $\Zp$-tower over $X$, with parameter $\alpha$. Then $\alpha$ belongs to $\Zp^\times$.
\end{lemma}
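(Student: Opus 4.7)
The plan is to argue by contradiction. Suppose $\alpha \in p\Zp$, i.e.\ $\alpha \notin \Zp^\times$. I want to show that under this assumption the first level of the tower $X_1 = X(\ZZ/p\ZZ, \alpha_{/1})$ fails to be connected, contradicting Lemma~\ref{lem:connected}.

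First I would unpack the construction of $X_1$. Since $\alpha \in p\Zp$, the composition with the projection $\Zp \to \ZZ/p\ZZ$ is identically zero, so $\alpha_{/1}$ sends every edge of $X$ to $0 \in \ZZ/p\ZZ$. By the definition of the derived graph, each edge $e: s \to t$ of $X$ lifts to edges
\[
(e,g) : (s,g) \longrightarrow (t, g + \alpha_{/1}(e)) = (t,g), \qquad g \in \ZZ/p\ZZ.
\]
In particular, the second coordinate is preserved along every edge, so the projection $\VV(X_1) \to \ZZ/p\ZZ$ given by $(v,g) \mapsto g$ is constant on each connected component of the underlying undirected graph.

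Next I would observe that each of the $p$ fibres of this projection is a copy of $X$ under the map $v \mapsto (v,g)$: the edges incident to a vertex $(v,g)$ lie entirely within the fibre over $g$, and within that fibre the graph is isomorphic to $X$. Since $X$ is connected, it follows that $X_1$ decomposes, both as a directed graph and after applying the forgetful map, into exactly $p \ge 2$ disjoint connected components. Thus $X_1$ is disconnected.

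Finally, I would invoke Lemma~\ref{lem:connected}: since $(X_n)_{n\ge 0}$ is assumed to be a $\Zp$-tower over the finite connected directed graph $X$, every $X_n$ (and in particular $X_1$) must be connected. This contradicts the disconnectedness derived above, so the assumption $\alpha \in p\Zp$ is untenable and we conclude $\alpha \in \Zp^\times$. The argument is short and there is no serious obstacle; the only point to be careful about is interpreting ``connected'' for directed graphs via the forgetful map, but the splitting above holds in either sense.
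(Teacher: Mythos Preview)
Your proof is correct and follows essentially the same approach as the paper: assume $\alpha\in p\Zp$, observe that $\alpha_{/1}$ is identically zero so that edges in $X_1$ preserve the $\ZZ/p\ZZ$-coordinate, deduce that $X_1$ is disconnected, and invoke Lemma~\ref{lem:connected} for a contradiction. The paper's argument is slightly terser but otherwise identical in structure and content.
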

\begin{proof}
    Suppose that $\alpha\in p\ZZ$. Then $\alpha_{/1}$ sends all $e\in\EE(X)$ to $0$. Therefore, any vertex $(v,a)\in \VV(X_1)=\VV(X)\times\ZZ/p\ZZ$ only admits edges to other vertices of the form $(w,a)$. In particular, there is no path from $(v,a)$ to $(w,b)$ if $a\ne b$. Thus, $X_1$ is not connected, which contradicts Lemma~\ref{lem:connected}.
\end{proof}
\begin{defn}\label{def:weight}
    Let $X$ be a finite directed graph and let $\tilde X$ be the directed graph such that $\VV(X)=\VV(\tilde X)$ and $\EE(\tilde X)=\EE(X)\bigsqcup \EE(X)^\iota$, where $\EE(X)^\iota$ is the set of edges obtained from reversing the direction of the edges in $\EE(X)$. For $e\in \EE(\tilde X)$, we write
    \[w(e)=\begin{cases} 1 & e\in \EE(X),\\
    -1& e\in\EE(X)^\iota.\end{cases}\]
    If $P$ be a  directed path in $\tilde X$ consisting of the edges $e_1,\dots, e_r$, the \textbf{weight} of $P$ is defined to be $w(P)=\sum_{i=1}^r w(e_i)$. We say that $X$ admits a cycle of weight $t$ if $\tilde X$ contains a cycle of weight $t$. 
\end{defn}

\begin{proposition}\label{prop:connected}
    Let $X$ be a finite connected directed graph. Let $\alpha$ be a constant $\Zp$-valued voltage assignment on $X$ that takes values in $\Zp^\times$. Then $X(\ZZ/p^n\ZZ,\alpha_{/n})$ is connected for all $n$ if and only if $X$ admits a cycle of weight coprime to $p$.
\end{proposition}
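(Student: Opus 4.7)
The key observation is that walks in the (undirected image of the) derived graph correspond to walks in $\tilde X$ in a way that tracks the second coordinate via weights. Specifically, using the description of edges in the derived graph together with the forgetful map, an edge $e\in\EE(X)$ from $s$ to $t$ lifts to an edge from $(s,\sigma)$ to $(t,\sigma+\alpha_{/n})$, while traversing this edge in the opposite direction (which is exactly traversing $e^\iota$ in $\tilde X$) changes the second coordinate by $-\alpha_{/n}$. By concatenation, a walk in $\tilde X$ from $v$ to $v'$ of weight $w$ lifts, for any $\sigma$, to a walk in $X(\ZZ/p^n\ZZ,\alpha_{/n})$ from $(v,\sigma)$ to $(v',\sigma+\alpha_{/n}\cdot w)$; conversely, every walk in the derived graph projects to such a walk in $\tilde X$.

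For the ``only if'' direction, I would specialize to $n=1$. Fix any vertex $v\in\VV(X)$. Connectedness of $X(\ZZ/p\ZZ,\alpha_{/1})$ produces a walk from $(v,0)$ to $(v,1)$, which by the correspondence above projects to a closed walk in $\tilde X$ at $v$ of some weight $w$ with $\alpha_{/1}\cdot w\equiv 1\pmod p$. Since $\alpha\in\Zp^\times$ its reduction is a unit, and so $w$ is coprime to $p$. This exhibits a cycle (closed walk) in $\tilde X$ of weight coprime to $p$.

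For the ``if'' direction, fix a base vertex $v_0\in\VV(X)$ and a closed walk $C_0$ in $\tilde X$, based at some vertex $u_0$, of weight $w_0$ coprime to $p$. Given any $v'\in\VV(X)$ and any $\tau\in\ZZ/p^n\ZZ$, the plan is to produce a walk from $(v_0,0)$ to $(v',\tau)$ in the derived graph. Connectedness of $X$ (in the undirected sense) gives an undirected path from $v_0$ to $v'$, which lifts to a walk in $\tilde X$ of some weight $w'$; the same gives an undirected path from $v'$ to $u_0$ and back, so we can form at $v'$ a closed walk of weight $w_0$ (travel to $u_0$, execute $C_0$, retrace the way back, whose contribution cancels). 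Concatenating the $v_0$-to-$v'$ walk with $k$ iterations of this relocated closed walk produces a walk in $\tilde X$ from $v_0$ to $v'$ of weight $w'+kw_0$. By the correspondence, this lifts to a walk in $X(\ZZ/p^n\ZZ,\alpha_{/n})$ ending at $(v',\alpha_{/n}(w'+kw_0))$. Because $\alpha w_0\in\Zp^\times$, its image in $\ZZ/p^n\ZZ$ generates the whole group additively, so some choice of $k$ makes the second coordinate equal $\tau$. Hence every vertex is reachable from $(v_0,0)$, and the derived graph is connected.

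The only mild subtlety is ensuring the relocation of $C_0$ to a base point at $v'$ produces a closed walk of the same weight, which follows because the detour edges are traversed once forward and once backward and thus contribute total weight zero. Everything else is mechanical once the walk-weight correspondence is in place, so I do not expect substantial obstacles beyond writing out this correspondence carefully.
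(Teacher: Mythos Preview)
Your argument is correct and complete; the walk--weight correspondence you set up is exactly the right lens, and both directions go through as written. The one cosmetic point is that in the ``only if'' direction you produce a closed \emph{walk} of weight coprime to $p$ rather than a simple cycle, but this is harmless: any closed walk decomposes into simple cycles whose weights sum to the total, so if the sum is coprime to $p$ then at least one summand is.

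The paper's proof takes a different route: it observes that the connectedness of $X(\ZZ/p^n\ZZ,\alpha_{/n})$ is equivalent to that of its ``doubled'' version $\tilde X_n$, and then invokes a result from Gonet's thesis stating that $\tilde X_n$ is connected if and only if the subgroup of $\ZZ/p^n\ZZ$ generated by $\{\alpha_n\cdot r_C : C\in\cC\}$ (where $\cC$ is the set of cycles in $\tilde X$ and $r_C$ their weights) is all of $\ZZ/p^n\ZZ$. Since $\alpha_n$ is a unit, this reduces immediately to the existence of some $r_C$ coprime to $p$. Your proof is more elementary and self-contained---you essentially reprove the cited criterion by hand in this specific setting---while the paper's is shorter because it outsources the combinatorics to an existing reference. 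Both arrive at the same place; yours has the advantage of not requiring the reader to chase an external citation.
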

\begin{proof}
Let $X_n=X(\ZZ/p^n\ZZ,\alpha_{/n})$. Let $\alpha_n\in\ZZ/p^n\ZZ$ denote the value taken by $\alpha_{/n}$. The connectedness of $X_n$ is equivalent to that of $\tilde X_n$, where $\tilde X_n$ is the directed graph defined as in Defintiion~\ref{def:weight}. Let $\cC$ be the set of cycles in $\tilde X$. For each $C\in\cC$, we write $r_C$ for the weight of $C$. It follows from \cite[Theorem~4]{gonet-thesis} that $\tilde X_n$ is connected if and only if the subgroup of $\ZZ/p^n\ZZ$ generated by $\{\alpha_n\cdot r_C: C\in\cC\} $ equals $\ZZ/p^n\ZZ$. As $\alpha_n$ is coprime to $p$, this is the case if and only if $r_C$ is coprime to $p$ for some $C$.
\end{proof}

\begin{corollary}\label{cor:exist}
    Let $X$ be a finite connected directed graph. A constant $\Zp$-valued voltage assignment $\alpha$ on $X$ gives rise to a $\Zp$-tower of graph coverings if and only if $\alpha$ takes values in $\Zp^\times$ and $X$ admits a cycle that is of weight coprime to $p$.
\end{corollary}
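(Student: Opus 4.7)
The plan is to deduce this as a direct combination of Lemma~\ref{lem:connected}, Lemma~\ref{lem:connected-unit}, and Proposition~\ref{prop:connected}, with one small verification about the Galois/tower structure.

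For the forward direction, I suppose that the derived graphs $X_n := X(\ZZ/p^n\ZZ,\alpha_{/n})$ form a $\Zp$-tower over $X$. Lemma~\ref{lem:connected-unit} then immediately gives $\alpha \in \Zp^\times$. Since each $X_n$ is connected by Lemma~\ref{lem:connected}, Proposition~\ref{prop:connected} forces $X$ to admit a cycle of weight coprime to $p$.

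For the reverse direction, I assume $\alpha \in \Zp^\times$ and $X$ admits a cycle of weight coprime to $p$. Proposition~\ref{prop:connected} guarantees that each $X_n$ is connected. The remark after Lemma~\ref{lem:iso} then says that because $X_n$ is a connected derived graph for the voltage assignment $\alpha_{/n}$ valued in $\ZZ/p^n\ZZ$, the covering $X_n/X$ is Galois with Galois group isomorphic to $\ZZ/p^n\ZZ$. It remains to check that the $X_n$ fit into a tower, i.e.\ that there is a covering $X_{n+1}\to X_n$ compatible with the maps to $X$. This is immediate from the construction: the projection $\ZZ/p^{n+1}\ZZ\to \ZZ/p^n\ZZ$ sends $\alpha_{/(n+1)}$ to $\alpha_{/n}$, so the map $(v,\sigma)\mapsto (v,\sigma\bmod p^n)$ on vertices (and analogously on edges) of the derived graphs is a well-defined graph covering $X_{n+1}\to X_n$ lying over the identity on $X$.

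There is no real obstacle here; the statement is essentially a packaging of the three preceding results. The only point that requires a sentence of justification beyond citing earlier results is the compatibility of the covering maps $X_{n+1}\to X_n \to X$, which follows formally from the fact that the voltage assignments $\alpha_{/n}$ are the reductions of a single $\Zp$-valued assignment $\alpha$.
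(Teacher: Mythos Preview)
Your proof is correct and follows essentially the same approach as the paper, which simply states that the corollary follows from combining Lemma~\ref{lem:connected}, Lemma~\ref{lem:connected-unit}, and Proposition~\ref{prop:connected}. Your additional verification of the tower compatibility $X_{n+1}\to X_n$ and the appeal to the remark after Lemma~\ref{lem:iso} for the Galois property are details the paper leaves implicit, but they are routine and your treatment of them is fine.
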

\begin{proof}
    This follows from combining Lemma~\ref{lem:connected} with Lemma~\ref{lem:connected-unit} and Proposition~\ref{prop:connected}.
\end{proof}
This proves the existence asserted in part (a)  of Theorem~\ref{thmA}. The uniqueness is given by the following:

\begin{corollary}\label{cor:unique}
        Let $X$ be a finite connected directed graph. Let $(X_n)_{n\ge0}$ and $(X'_n)_{n\ge0}$ be constant $\Zp$-towers of connected graphs over $X$. Then $X_n$ and $X'_n$ are isomorphic for all $n\ge0$.
\end{corollary}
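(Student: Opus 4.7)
The plan is to reduce the claim to a direct application of Lemma~\ref{lem:iso}. Since both $(X_n)_{n\ge 0}$ and $(X'_n)_{n\ge 0}$ are constant $\Zp$-towers of connected graphs over $X$, each arises from a constant $\Zp$-valued voltage assignment. Write $\alpha$ and $\alpha'$ for the (common) parameters attached to these two towers, and recall that Lemma~\ref{lem:connected-unit} forces $\alpha,\alpha'\in\Zp^\times$.

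Fix $n\ge 0$, and let $\alpha_n,\alpha'_n\in(\ZZ/p^n\ZZ)^\times$ denote the reductions of $\alpha$ and $\alpha'$ modulo $p^n$. Since $\alpha_n$ is a unit, there exists $u\in(\ZZ/p^n\ZZ)^\times$ with $u\alpha_n=\alpha'_n$. The map $\Phi\colon\ZZ/p^n\ZZ\to\ZZ/p^n\ZZ$ defined by $x\mapsto ux$ is then a group automorphism. Because the voltage assignment $\alpha_{/n}$ is constant with value $\alpha_n$, the composite $\Phi\circ\alpha_{/n}$ is likewise constant with value $u\alpha_n=\alpha'_n$, and hence coincides with $\alpha'_{/n}$.

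Applying Lemma~\ref{lem:iso} to $G=\ZZ/p^n\ZZ$, the automorphism $\Phi$, and the voltage assignment $\alpha_{/n}$ yields an isomorphism of directed graphs
\[
X_n=X(\ZZ/p^n\ZZ,\alpha_{/n})\;\xrightarrow{\;\sim\;}\;X(\ZZ/p^n\ZZ,\Phi\circ\alpha_{/n})=X(\ZZ/p^n\ZZ,\alpha'_{/n})=X'_n,
\]
which is the desired conclusion. Since each $n$ is handled independently by the same argument, no obstacle arises; the only subtlety to flag is the appeal to Lemma~\ref{lem:connected-unit} that guarantees both parameters are units, without which the ratio $u=\alpha'_n\alpha_n^{-1}$ would not make sense in $(\ZZ/p^n\ZZ)^\times$.
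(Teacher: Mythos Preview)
Your proof is correct and follows essentially the same approach as the paper: both invoke Lemma~\ref{lem:connected-unit} to ensure the parameters are units, then use multiplication by a unit as an automorphism and apply Lemma~\ref{lem:iso}. The only cosmetic difference is that the paper builds the automorphism $\Phi$ once on $\Zp$ (multiplication by $\alpha'\alpha^{-1}\in\Zp^\times$) and then implicitly reduces modulo $p^n$, whereas you construct the automorphism directly at each finite level $\ZZ/p^n\ZZ$; the two arguments are interchangeable.
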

\begin{proof}
    Let $\alpha$ and $\alpha'$ be the corresponding parameters. Lemma~\ref{lem:connected-unit} says that $\alpha,\alpha'$ belong to $\Zp^\times$. In particular, there is a group isomorphism $\Phi:\Zp\rightarrow\Zp$ such that $\alpha'=\Phi\circ\alpha$ given by the multiplication by a fixed element of $\Zp^\times$. Hence, the corollary is a consequence of Lemma~\ref{lem:iso}. 
\end{proof}
In particular, without loss of generality, we may assume that the parameter of a constant voltage assignment is equal to $1$.

Suppose that $X$ is a bouquet. In other words, $|\VV(X)|=1$ and $\EE(X)\ne \emptyset$. Then any $\Zp$-voltage assignment whose image contains an element that belongs to $\Zp^\times$ gives rise to a $\Zp$-tower of connected graphs over $X$ (see \cite[Corollary~2.16]{DLRV}). Indeed, as a bouquet contains a cycle of length one, Proposition~\ref{prop:connected} applies.

We define the following power series associated with the constant voltage assignment whose parameter is equal to $1$, given as in Remark~\ref{rk:undirected}:
\begin{defn}\label{def:MXT}
    Let $X$ be a finite connected directed graph. We define the power series
    \[
    M_X(T)=\det(D-A(1+T)-A^t(1+T)^{-1})\in\Zp[[T]],
    \]
    where $D$ is the diagonal matrix whose diagonal entries are given by the sum of the in-degree and out-degree of the vertices of $X$ and $A$ is the adjacency matrix of $X$.
\end{defn}

\begin{remark}\label{rk:mu-lambda}
Recall that $M_X(0)=0$ and we can write 
\[
M_X(T)=p^mTg(T)u(T)
\]
 for some integer $m\ge 0$, a distinguished polynomial $g(T)$  and $u(T)\in\Zp[[T]]^\times$. If $X$ satisfies the conditions given in Corollary~\ref{cor:exist}, we have $\mu(X)=m$ and $\lambda(X)=\deg(g)$. 
 \end{remark}

We now turn our attention to the proof of Theorem~\ref{thmA}(b).
\begin{proposition}\label{prop:stabilize}
    Let $X$ be a finite connected directed graph such that $\tilde X$ contains a cycle. Let $\alpha$ be a constant $\Zp$-valued voltage assignment on $X$ that takes values in $\Zp^\times$. The number of connected components $r_n$ of $X_n=X(\ZZ/p^n,\alpha_{/n})$ is uniformly bounded. Let $n_0\ge0$ be the minimum integer such that $r_n=r_{n_0}$ for all $n\ge n_0$. Let $0\le m\le n_0$. Then $X_m$ is isomorphic to $p^m$ disjoint copies of $X$.    
\end{proposition}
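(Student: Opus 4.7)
The plan is to translate the counting of connected components into an index computation in $\ZZ/p^n\ZZ$, and then exhibit an explicit trivialization of the cover for $m\le n_0$.

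First, a standard analysis of derived graphs in the spirit of the proof of Proposition~\ref{prop:connected} (which invokes \cite[Theorem~4]{gonet-thesis}) shows that, if $\cC$ denotes the set of cycles of $\tilde X$ and
\[H_n:=\langle\alpha_n\cdot r_C:C\in\cC\rangle\le\ZZ/p^n\ZZ,\]
then $H_n$ is precisely the stabilizer, under the deck action of $\ZZ/p^n\ZZ$ on $X_n$, of the connected component containing a fixed base vertex $(v_0,0)$. Since $\ZZ/p^n\ZZ$ acts transitively on the set of components (a consequence of transitivity on fibers),
\[r_n=[\ZZ/p^n\ZZ:H_n].\]
Because $\alpha\in\Zp^\times$, $\alpha_n$ is a unit modulo $p^n$, and so $H_n=\langle r_C\bmod p^n\rangle$. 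Set $d:=\gcd\{r_C:C\in\cC\}$, which is a positive integer under the hypothesis that $\tilde X$ contains a cycle, and write $d=p^{n_0}d'$ with $\gcd(d',p)=1$. An elementary check then gives $H_n=p^{\min(n,n_0)}\cdot\ZZ/p^n\ZZ$ and hence $r_n=p^{\min(n,n_0)}$. This yields both the uniform bound $r_n\le p^{n_0}$ and the identification of $n_0$ as the minimal stabilization index.

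For $0\le m\le n_0$, I would exhibit an explicit isomorphism
\[\varphi\colon X_m\;\longrightarrow\;X\times\ZZ/p^m\ZZ,\qquad (v,g)\longmapsto(v,g-f(v)),\]
whose target is manifestly a disjoint union of $p^m$ copies of $X$. The function $f\colon\VV(X)\to\ZZ/p^m\ZZ$ is built as follows: since $v_p(r_C)\ge n_0\ge m$ for every $C\in\cC$, the $1$-cochain $\alpha_{/m}$ satisfies $\alpha_{/m}(C)=c_m\cdot r_C\equiv 0\pmod{p^m}$, where $c_m\in(\ZZ/p^m\ZZ)^\times$ is the common edge value of $\alpha_{/m}$. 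Fixing a base vertex $v_0\in\VV(X)$ and a path $P_v$ from $v_0$ to $v$ in $\tilde X$ for each $v$ (with edges in $\EE(X)^\iota$ contributing the negative of their $\alpha_{/m}$-value), the vanishing of $\alpha_{/m}$ on every cycle makes $f(v):=\alpha_{/m}(P_v)$ well-defined and forces the coboundary identity $\alpha_{/m}(e)=f(t(e))-f(s(e))$ for every edge $e$. This identity is precisely what is needed for $\varphi$ to preserve the second coordinate along every edge, making it an isomorphism of directed graphs.

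The main obstacle is the trivialization step: one must carefully track the sign conventions of Definition~\ref{def:weight} when constructing $f$, and verify that the resulting bijection respects the edge structure of the derived graph (including the proper handling of edges in $\EE(X)^\iota$ versus $\EE(X)$). Once this is in place, the uniform bound and the explicit isomorphism follow from the elementary $p$-adic valuation analysis of the first step.
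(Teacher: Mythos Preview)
Your proof is correct. The first half --- identifying the stabilizer of a component with $\langle r_C\bmod p^n\rangle$ and invoking orbit--stabilizer to get $r_n=p^{\min(n,n_0)}$ --- is exactly what the paper does. Where you depart is in the last step: the paper simply records $r_m=p^m$ for $m\le n_0$ and leaves the isomorphism $X_m\cong\bigsqcup_{p^m}X$ implicit (the $p^m$ mutually isomorphic components of a $p^m$-sheeted cover are each $1$-sheeted, hence copies of $X$). Your explicit trivialization via the potential function $f$ --- observing that $\alpha_{/m}$ kills every cycle and is therefore a coboundary --- is a more constructive route that makes the isomorphism visible and bypasses any appeal to covering-space generalities; the paper's argument is terser but leaves more to the reader.
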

\begin{proof}
Without loss of generality, we may assume  $\alpha(e)=1$ for all $e\in\EE(X)$ (thanks to Lemma~\ref{lem:iso}). Under the notation of the proof of Proposition~\ref{prop:connected}, let $U_n\leq (\ZZ/p^n\ZZ)$ be the subgroup generated by $\{r_C \mod p^n:C\in \cC\}$.

Recall from \cite[Proposition~2.4]{LM2} that $\ZZ/p^n\ZZ$ acts transitively on the connected components of $X_n$, which are isomorphic to each other. If $a\in\ZZ/p^n\ZZ$ is in the stabilizer of this action, then $(v,0)$ and $(v,a)$ lie in the same connected component of $X_n$. This gives a cycle of {weight $r$ in $\tilde X$} with $r\equiv a\mod p^n$. Therefore, $a\in U_n$.

Conversely, if $a\equiv r_C\mod p^n$ for some $C\in\cC$, there exists $v\in\VV(X)=\VV(\tilde X)$ through which $C$ passes. Then $a$ sends the vertex $(v,0)\in\VV(X)$ to $(v,a)$ under the natural action of $\ZZ/p^n\ZZ$ on $X_n$. {Suppose that $\cC$ consists of the edges $e_1,e_2,\dots, e_{r_C}$ such that the source of $e_1$ and the target of $e_{r_C}$ are $v$. Let $P_i$ be the path consisting of the edges $e_1,\dots ,e_i$ for $i\in\{1,\dots, r_C\}$. Then, the edges $(e_1,0),(e_2,w(P_1)),\dots (e_{r_{C}},w(P_{r_C-1}))$ form a path from $(v,0)$ to $(v,a)$ in $\tilde X_n$}. Therefore, $(v,0)$ and $(v,a)$ lie in the same connected component of $X_n$. In particular, $a\in U_n$. Hence, we deduce that $U_n$ is the stabilizer of the action of $\ZZ/p^n\ZZ$ on the connected components of $X_n$.

It follows from the orbit-stabilizer theorem that 
\[
r_n=[\ZZ/p^n\ZZ:U_n].
\]
Let $n_0\ge0$ be the integer such that
    \[[\Zp:\langle r_C:C\in \cC\rangle]=p^{n_0}.\]
We deduce that $r_n=p^{n_0}$ for $n\ge n_0$ and $r_m=p^m$ for $0\le m\le n_0$. Hence, the proposition follows.    
\end{proof}

\begin{remark}
    Our hypothesis that {$\tilde X$} contains a cycle ensures that $\cC$  is non-empty. When $\cC=\emptyset$, the subgroup $U_n$ in the proof of Proposition~\ref{prop:stabilize} is trivial. In this case, $X_n$ is the disjoint union of $p^n$ copies of $X$, and we do not obtain any $\Zp$-towers from a constant $\Zp$-valued voltage assignment.
\end{remark}

\begin{corollary}
\label{cor:stabilize}
    Under the same notation and assumptions as Proposition~\ref{prop:stabilize}, $(X_n)_{n\ge0}$ consists of $p^{n_0}$ copies of $\Zp$-towers over $X$. 
\end{corollary}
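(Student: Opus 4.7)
The plan is to show that for $n \ge n_0$ the covering $X_{n+1} \to X_n$ induces a bijection on connected components, and then to assemble the resulting strands into $p^{n_0}$ mutually isomorphic $\Zp$-towers over $X$.

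First, I would check that for $n \ge n_0$ the natural projection $X_{n+1} \to X_n$ restricts to a bijection between the sets of connected components. By continuity, this projection sends each connected component of $X_{n+1}$ into a single component of $X_n$, so one obtains a map $\pi_\ast$ between these finite sets. The map is surjective because $X_{n+1} \to X_n$ is surjective on vertices, and its source and target both have cardinality $p^{n_0}$ by Proposition~\ref{prop:stabilize}, so $\pi_\ast$ is bijective. Counting preimages of any vertex then shows that the unique component of $X_{n+1}$ lying above a component $B$ of $X_n$ is a degree-$p$ cover of $B$.

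Next, for each $j \in \{1,\dots,p^{n_0}\}$ I would fix a component $Z^{(j)}_{n_0}$ of $X_{n_0}$ and let $Z^{(j)}_n$ denote the unique component of $X_n$ mapping to $Z^{(j)}_{n_0}$ via the composition $X_n \to X_{n_0}$. By Proposition~\ref{prop:stabilize}, $Z^{(j)}_{n_0} \cong X$. To show that $Z^{(j)}_n / X$ is Galois with group $\ZZ/p^{n-n_0}\ZZ$, I would identify the stabilizer of $Z^{(j)}_n$ under the action of $\Gal(X_n/X) = \ZZ/p^n\ZZ$ on the set of components: by the orbit--stabilizer theorem (combined with the transitivity recalled in the proof of Proposition~\ref{prop:stabilize}) this stabilizer is the unique cyclic subgroup of order $p^{n-n_0}$, and it acts on $Z^{(j)}_n$ as deck transformations over $X$. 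Since $Z^{(j)}_n \to X$ is a connected covering of degree $p^{n-n_0}$ (by comparing vertex counts), this action realizes it as a $\ZZ/p^{n-n_0}\ZZ$-Galois cover of $X$.

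Setting $W^{(j)}_m := Z^{(j)}_{n_0 + m}$ for $m \ge 0$ then gives a $\Zp$-tower $(W^{(j)}_m)_{m \ge 0}$ over $X$ for each $j$, and the $p^{n_0}$ such towers are pairwise isomorphic because the Galois action permutes the components of each $X_n$ transitively. Together they assemble all the components appearing at each level, giving the claimed decomposition of $(X_n)_{n \ge 0}$. I expect the main obstacle to be the Galois identification in the third paragraph, where one has to verify that restricting the cyclic group action on $X_n$ to the stabilizer of a single component indeed produces a free action yielding $X$ as the quotient, rather than merely a connected cover of the right degree.
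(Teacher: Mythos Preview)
Your argument is correct and is in fact a more self-contained version of what the paper does. The paper's proof is two lines: it fixes a connected component $Y$ of $X_{n_0}$, defines $Y_n$ as the full preimage of $Y$ in $X_n$ for $n\ge n_0$, and then cites \cite[Corollary~4.7]{LM3} to conclude that $(Y_n)_{n\ge n_0}$ is a $\Zp$-tower over $Y\cong X$. In other words, the paper outsources precisely the two things you prove by hand: that the preimage $Y_n$ is connected, and that $Y_n/Y$ is Galois with group $\ZZ/p^{n-n_0}\ZZ$. Your first paragraph (the component count forcing a bijection) handles connectedness, and your orbit--stabilizer argument handles the Galois structure; together these reproduce the content of the cited corollary in this special setting. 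The benefit of your route is that it is elementary and internal to the paper; the benefit of the paper's route is brevity.

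One small terminological point: when you write ``$\Gal(X_n/X)=\ZZ/p^n\ZZ$'' you should instead refer to the canonical $\ZZ/p^n\ZZ$-action on the derived graph $X(\ZZ/p^n\ZZ,\alpha_{/n})$. When $X_n$ is disconnected the full deck group can be strictly larger than $\ZZ/p^n\ZZ$, so calling this subgroup ``$\Gal$'' is imprecise. This does not affect your argument, since all you use is that $\ZZ/p^n\ZZ$ acts freely on vertices (immediate from the formula $(v,a)\mapsto(v,a+b)$) and transitively on components (already recalled in the proof of Proposition~\ref{prop:stabilize}); freeness is exactly what resolves the concern you flag at the end about the stabilizer acting faithfully on a single component.
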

\begin{proof}
Recall that all connected components of $X_n$ are isomorphic to each other. Let $Y$ be a connected component of $X_{n_0}$. For each $n\ge n_0$, let $Y_n$ be the pre-image of $Y$ in $X_n$. Then $(Y_n)_{n\ge n_0}$ is a $\Zp$-tower of $Y$ by \cite[Corollary~4.7]{LM3}.
\end{proof}

\begin{example}
    Take $p=3$ and suppose that $X$ is the directed cycle with 3 vertices. Let $\VV(X)=\{v_i:i\in\ZZ/3\ZZ\}$ and $\EE(X)=\{e_i:i\in\ZZ/3\ZZ\}$ such that $e_i$ goes from $i$ to $i+1$. Suppose that $\alpha(e_i)=1$ for all $i$. Then
    the derived graph $X(\ZZ/3\ZZ,\alpha_{/1})$ consists of three disjoint copies of $X$, induced by $\{(v_i,0),(v_{i+1},1),(v_{i+2},2)\}$, $i=1,2,3$. For $n\ge2$, $X(\ZZ/3^n\ZZ,\alpha_{/n})$ consists of three disjoint copies of directed cycles with $3^{n}$ vertices, given by $\{(v_i,0),(v_{i+1},1),(v_{i+2},2),\dots,(v_i,3^n-3),(v_{i+1},3^n-2),(v_{i+2},3^n-1)\}$.
\end{example}

\begin{defn}\label{def:disconnected}
    Let $X$ be a finite connected directed graph {such that $\tilde X$} contains a cycle. Let $\alpha$ be a constant $\Zp$-valued voltage assignment on $X$ that takes values in $\Zp^\times$. Let $n_0$ be the integer given by Proposition~\ref{prop:stabilize}. Let $\hat{X}_{n_0}$ be a connected component of $X_{n_0}$. For each $n>n_0$, let $\hat X_n$ be the pre-image of $\hat X_{n_0}$ in $X_n$. We shall refer to $(\hat X_n)_{n\ge n_0}$ a \textbf{constant $\Zp$-tower} over $X$. We define $\mu(X)$ and $\lambda(X)$ to be the corresponding invariants given by \eqref{eq:Iw}.
\end{defn}

Remark~\ref{rk:mu-lambda} discusses how to determine $\mu(X)$ and $\lambda(X)$ from $M_X(T)$ when $n_0=0$. We conclude this section with a description of $\mu(X)$ and $\lambda(X)$ in terms of $M_X(T)$ for general $n_0$. The following lemma is a slight generalization of \cite{KM}, where the same result was proved under the assumption that $X_n$ is connected for all $n$.
{Note that the invariants $\mu$ and $\lambda$ as well as the groups $\Pic(\cdot)$, $\Pr(\cdot)$ and $\Div(\cdot)$ have only been defined for undirected graphs. If $X$ is a directed graph, we write $\Pic(X)$ to denote $\Pic(X')$, where $X'$ is the graph obtained from $X$ after applying the forgetful map (and likewise for $\Pr(X)$ and $\Div(X)$).  We write $X_\infty$ for $X(\Zp,\alpha)$, where $\alpha$ is the constant voltage assignment with parameter $1$.}
\begin{lemma}
\label{lemma:projection-pic}
    Let $X$ and $n_0$ be as in Proposition \ref{prop:stabilize}. For all integers $n\ge n_0$, there is an isomorphism
    \[\pi_n\colon \textup{Pic}(X_\infty)\otimes \Z_p[[T]]/((1+T)^{p^n}-1)\to \textup{Pic}(X_n)\otimes \Z_p.\]
\end{lemma}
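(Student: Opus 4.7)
Set $\Lambda = \Zp[[T]]$ and let $\Gamma = \Gal(X_\infty/X)$, identified with $\Zp$ via a topological generator $\gamma$ corresponding to $1 \in \Zp$; under the standard identification $\Lambda \cong \Zp[[\Gamma]]$ sending $\gamma$ to $1+T$, the subgroup $\Gamma_n := \Gal(X_\infty/X_n)$ corresponds to the ideal generated by $(1+T)^{p^n}-1$. The plan is to define $\pi_n$ as the map induced by the natural projection $X_\infty \to X_n$ on Picard groups, and to show it is an isomorphism by a diagram chase comparing the divisor--principal divisor exact sequences of $X_\infty$ and $X_n$.

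First I would write down the tautological short exact sequence
\[0 \to \Pri(X_\infty)\otimes\Zp \to \Div(X_\infty)\otimes\Zp \to \Pic(X_\infty)\otimes\Zp \to 0\]
of $\Lambda$-modules, together with its counterpart for $X_n$ (which is naturally a sequence of $\Lambda/((1+T)^{p^n}-1)$-modules, since $\Gamma_n$ acts trivially on $X_n$). The projection $\pi_n\colon X_\infty \to X_n$ induces a natural commutative ladder between them, and after applying $(-)\otimes_\Lambda \Lambda/((1+T)^{p^n}-1)$ to the top row I obtain a commutative diagram with right-exact rows mapping into the bottom row.

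Next I would verify that the two leftmost vertical maps in this diagram are isomorphisms. Since $\VV(X_\infty) = \VV(X) \times \Zp$ with $\Gamma$ acting by translation on the second coordinate, the element $(1+T)^{p^n}-1$ sends the basis element $(v,a)$ of $\Div(X_\infty)\otimes\Zp$ to $(v,a+p^n)-(v,a)$. Passing to the quotient identifies $(v,a)$ with $(v,a+p^n)$ and yields the natural isomorphism $(\Div(X_\infty)\otimes\Zp)\otimes_\Lambda \Lambda/((1+T)^{p^n}-1) \cong \Div(X_n)\otimes\Zp$. Because $\pi_n$ is a graph covering, and hence a local isomorphism at every vertex, it carries the principal divisor $P_{(v,a)}$ of $X_\infty$ to $P_{(v,a\bmod p^n)}$ of $X_n$; the same computation then gives the analogous isomorphism on principal divisors. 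A diagram chase on the resulting ladder of right-exact rows (i.e.\ the four-term form of the five lemma) then produces the desired isomorphism on the Picard quotients.

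The main obstacle is ensuring that the identification on principal divisors is compatible with $\pi_n$ when $X_\infty$ is disconnected. In the connected case of \cite{KM}, the relevant computation involves a single $\Gamma$-orbit of vertices, whereas here $X_\infty$ has $p^{n_0}$ connected components permuted by $\Gamma/\Gamma_{n_0}$. The observation that bypasses this difficulty is that, since $\pi_n$ is a covering of graphs, the formula for $P_{(v,a)}$ depends only on the local structure at the single vertex $(v,a)$, which is identified via $\pi_n$ with the local structure of $X_n$ at $(v, a \bmod p^n)$. The identification is therefore componentwise and unaffected by the global disconnectedness, so the proof of \cite{KM} extends essentially verbatim.
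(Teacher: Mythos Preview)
Your approach coincides with the paper's: both compare the exact sequences $\Pr\to\Div\to\Pic$ for $X_\infty$ and $X_n$ via the covering projection and then perform a diagram chase. The paper applies the snake lemma directly to the projections $\pi_n^1,\pi_n^2$ and identifies $\ker(\pi_n^3)$ as the image of $\ker(\pi_n^2)=((1+T)^{p^n}-1)(\Div(X_\infty)\otimes\Lambda)$, whereas you tensor the $X_\infty$-row down first and invoke a right-exact five lemma; these are equivalent manoeuvres.

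One step, however, is stated too strongly. The induced map on principal divisors
\[
\bigl(\Pr(X_\infty)\otimes\Zp\bigr)\big/((1+T)^{p^n}-1)\;\longrightarrow\;\Pr(X_n)\otimes\Zp
\]
is in general only a surjection, not an isomorphism: its kernel is controlled by the $(\gamma^{p^n}-1)$-torsion of $\Pic(X_\infty)$, which is typically nonzero. For instance, when $X$ is a bouquet with one loop, $\Pr(X_\infty)$ is free of rank~$1$ over $\ZZ[\Gamma]$, so the left side has $\Zp$-rank $p^n$, while the right side (principal divisors of the $p^n$-cycle) has $\Zp$-rank $p^n-1$. This does not harm your conclusion, since your chase only needs surjectivity on the $\Pr$-column together with the isomorphism on the $\Div$-column; but the sentence ``the same computation then gives the analogous isomorphism'' must be weakened to surjectivity. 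The paper sidesteps this wrinkle by working not with $\Pr(X_\infty)\otimes\Lambda$ itself but with its \emph{image} $\Pr_{\Zp[[T]]}(X_\infty)$ inside $\Div(X_\infty)\otimes\Lambda$, so that the top row is genuinely short exact before the snake lemma is applied.
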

\begin{proof} Let $\textup{Pr}_{\Z_p[[T]]}(X_\infty)$ be the image of $\textup{Pr}(X_\infty)\otimes \Z_p[[T]]$ in $\textup{Div}(X_\infty)\otimes\Z_p[[T]]$. By definition, there is an isomorphism
\begin{align}\label{tensor-lambda}\textup{Pic}(X_\infty)\otimes \Z_p[[T]]\cong (\textup{Div}(X_\infty)\otimes \Z_p[[T]])/\textup{Pr}_{\Z_p[[T]]}(X_\infty).\end{align}
    There are natural projections
    \begin{align*}
        \pi_n^1&\colon \textup{Pr}_{\Z_p[[T]]}(X_\infty)\to \textup{Pr}(X_n)\otimes \Z_p\\
        \pi_n^2&\colon \textup{Div}(X_\infty)\otimes \Z_p[[T]]\to \textup{Div}(X_n)\otimes \Z_p.
    \end{align*}
    The kernel of $\pi_n^1$ is given by $((T+1)^{p^n}-1)(\textup{Div}(X_\infty)\otimes \Z_p[[T]])$. The snake lemma applied to the commutative diagram
     \[\begin{tikzcd}
0\arrow[r]&\textup{Pr}_{\Z_p[[T]]}(X_\infty)\arrow[r]\arrow[d,"\pi_n^1"]&\textup{Div}(X_\infty)\otimes\Z_p[[T]]\arrow[r]\arrow[d,"\pi_n^2"]&\textup{Pic}(X_\infty)\otimes \Z_p[[T]]\arrow[r]\arrow[d,"\pi_n^3"]&0\\
  0\arrow[r]&\textup{Pr}(X_n)\otimes \Z_p\arrow[r]&\textup{Div}(X_n)\otimes \Z_p\arrow[r]&\textup{Pic}(X_n)\otimes \Z_p\arrow[r]&0
  \end{tikzcd}\]  
    implies that 
    \[\pi_n^3\colon \textup{Pic}(X_\infty)\otimes \Z_p[[T]]\to \textup{Pic}(X_n)\]
    is surjective and that \[\ker(\pi_n^3)=((T+1)^{p^n}-1)(\textup{Pic}(X_\infty)\otimes \Z_p[[T]]),\]
    which results in the isomorphism $\pi_n$ asserted in the statement of the lemma.
\end{proof}
\begin{proposition}\label{prop:disconnected}
    Let $X$ and $n_0$ be as in Proposition~\ref{prop:stabilize}. Then
    \[
    M_X(T)=\left(p^{\mu(X)}Tg(T)\right)^{p^{n_0}}u(T),
    \]
    where $g(T)$ is a distinguished polynomial of degree $\lambda(X)$ and $u(T)\in\Zp[[T]]^\times$.
\end{proposition}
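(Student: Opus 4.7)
The plan is to derive the factorization by combining Lemma~\ref{lemma:projection-pic} with the explicit decomposition of $X_n$ into isomorphic components and the Weierstrass factorization of $M_X(T)$.

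First, combining Lemma~\ref{lemma:projection-pic} with Proposition~\ref{prop:stabilize}, for $n\ge n_0$ we obtain isomorphisms
\[\Pic(X_\infty)/\omega_n\Pic(X_\infty)\cong\Pic(X_n)\otimes\Zp\cong(\Pic(\hat X_n)\otimes\Zp)^{p^{n_0}},\]
where $\omega_n(T)=(1+T)^{p^n}-1$. Taking $p$-torsion orders yields $\ord_p|\Pic(X_n)_{\tors,p}|=p^{n_0}\ord_p\kappa(\hat X_n)$. Applying~\eqref{eq:Iw} to the connected sub-tower $(\hat X_n)_{n\ge n_0}$, whose Iwasawa invariants are $\mu(X),\lambda(X),\nu(X)$ by Definition~\ref{def:disconnected}, gives for $n\gg 0$
\[\ord_p|\Pic(X_n)_{\tors,p}|=\mu(X)\,p^n+p^{n_0}\lambda(X)(n-n_0)+p^{n_0}\nu(X).\]

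Second, I would relate this growth to $M_X(T)$ via its Weierstrass decomposition $M_X(T)=p^{\mu^{*}}P^{*}(T)U^{*}(T)$. By the same analysis as in Remark~\ref{rk:mu-lambda} and \cite{KM}, the growth of $|\Pic(X_\infty)/\omega_n\Pic(X_\infty)|_{\tors,p}$ is asymptotically $\mu^{*}p^n+\lambda^{*}n+O(1)$ with $\lambda^{*}=\deg P^{*}$. Matching with the previous expression forces $\mu^{*}=\mu(X)\,p^{n_0}$ and $\lambda^{*}=p^{n_0}(\lambda(X)+1)$.

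To lift this numerical matching to the structural factorization $M_X(T)=(p^{\mu(X)}Tg(T))^{p^{n_0}}u(T)$, I would exploit the induced-module description of $\Pic(X_\infty)$. The inertia subgroup $p^{n_0}\Zp\le\Gal(X_\infty/X)$ stabilizes each of the $p^{n_0}$ components of $X_\infty$, so $\Pic(X_\infty)\cong\Zp\lb T\rb\otimes_{\Zp\lb T'\rb}\Pic(\hat X_\infty)$ along the inclusion $T'\mapsto\omega_{n_0}(T)$. The sub-tower's characteristic power series in $\Zp\lb T'\rb$ has the form $p^{\mu(X)}T'g(T')$ with $g$ distinguished of degree $\lambda(X)$, so the $\Zp\lb T\rb$-characteristic polynomial of $\Pic(X_\infty)$ is obtained by substituting $T'=\omega_{n_0}(T)$. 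The Frobenius-type congruence $\omega_{n_0}(T)\equiv T^{p^{n_0}}\pmod p$, combined with the $p$-adic expansion of $\omega_{n_0}$, produces the $p^{n_0}$-th power structure up to a unit in $\Zp\lb T\rb^{\times}$.

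The principal obstacle will be this last step: identifying $M_X(T)$ up to units with the image of the sub-tower's characteristic polynomial under $T'\mapsto\omega_{n_0}(T)$, and verifying that the resulting element is an honest $p^{n_0}$-th power in $\Zp\lb T\rb$ rather than merely a congruence modulo $p$. Handling the contribution from the $\Zp\lb T\rb$-free part of $\Pic(X_\infty)$ (which accounts for the extra factor of $T$ inside the parenthesis) requires careful bookkeeping in the Fitting-ideal / characteristic-polynomial translation.
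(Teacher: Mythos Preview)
Your approach diverges substantially from the paper's, and it also exposes a real problem with the statement itself.

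\textbf{Comparison with the paper.} The paper does not do numerical matching at all. It uses the presentation $\Pic(X_\infty)\otimes\Lambda\cong\Lambda^{\oplus k}/(D-A(1+T)-A^t(1+T)^{-1})\Lambda^{\oplus k}$, takes inverse limits of the finite-level isomorphisms $(\Pic(\hat X_n)\otimes\Zp)^{p^{n_0}}\cong\Pic(X_n)\otimes\Zp$, and then asserts a $\Lambda$-module isomorphism $(\Lambda^{\oplus k}/F(T)\Lambda^{\oplus k})^{p^{n_0}}\cong\Lambda^{\oplus k}/(D-A(1+T)-A^t(1+T)^{-1})\Lambda^{\oplus k}$, from which equality of determinants (up to unit) is read off.

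\textbf{Errors in your proposal.} In your second paragraph the growth you wrote down actually gives $\mu^{*}=\mu(X)$, not $\mu(X)p^{n_0}$: the factor $p^{n_0}$ multiplies $\mu(X)p^{n-n_0}$, yielding $\mu(X)p^n$. So numerical matching already contradicts the exponent of $p$ in the asserted factorization whenever $\mu(X)>0$ and $n_0>0$.

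Your third paragraph is the right structural idea, but it does not produce a $p^{n_0}$-th power. The induced module gives characteristic ideal $p^{\mu(X)}\omega_{n_0}(T)\,g(\omega_{n_0}(T))$, which has $T$-adic valuation $1$, not $p^{n_0}$. Concretely, take $p=3$ and $X$ the directed $3$-cycle (so $n_0=1$): one computes $M_X(T)=2-(1+T)^3-(1+T)^{-3}=-\omega_1(T)^2(1+T)^{-3}$, whose distinguished part is $T^2(T^2+3T+3)^2$. This is $\det F(T')|_{T'=\omega_1(T)}$ with $\det F(T')\sim T'^2$, confirming your substitution formula; but it is \emph{not} of the form $(Tg(T))^3\cdot u(T)$, since it is divisible by $T^2$ and not by $T^3$.

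\textbf{The upshot.} The obstacle you flag in your last paragraph is genuine and in fact fatal: the literal $p^{n_0}$-th power structure fails. The paper's own argument has the matching gap --- the displayed ``isomorphism'' $(\Lambda^{\oplus k}/F(T)\Lambda^{\oplus k})^{p^{n_0}}\cong\Lambda^{\oplus k}/(D-A(1+T)-\cdots)\Lambda^{\oplus k}$ is not $\Lambda$-linear for the diagonal $T$-action on the left; the correct $\Lambda$-structure is the induced one, and then determinants compare via $T'\mapsto\omega_{n_0}(T)$ rather than via a $p^{n_0}$-th power. What survives (and is all that the applications in \S4--\S5 use) is the numerical consequence: the $\mu$-invariant of $M_X(T)$ vanishes iff $\mu(X)=0$, and its $\lambda$-invariant equals $p^{n_0}(\lambda(X)+1)$. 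Your induced-module computation proves exactly that.
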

\begin{proof}
Let $k$ be the number of vertices in $X$. Then $$\Pic(X_\infty)\otimes\Zp[[T]]\cong\frac{\Zp[[T]]^{\oplus k}}{(D-A(1+T)-A^t(1+T)^{-1}){\Z_p[[T]]^{\oplus k}}}$$
by \eqref{tensor-lambda}, combined with the description of the principal divisors given in \cite[\S4]{KM1} \footnote{Note that the discussion in loc. cit. imposes the assumption that $X_n$ is connected for all $n$. Nonetheless, the description of the principal divisors therein is still valid without this assumption.}. Furthermore,
    Lemma \ref{lemma:projection-pic} says that
    \[
\frac{    \Pic(X_\infty)\otimes\Zp[[T]]}{((1+T)^{p^n}-1){\Pic(X_\infty)\otimes\Zp[[T]]}}\cong \Pic(X_n)\otimes \Z_p
    \]
    for all $n\ge n_0$. Hence, we deduce that
        \begin{align*}
     (\Pic(\hat X_n)\otimes\Zp)^{p^{n_0}}
    & \cong\Pic(X_n)\otimes\Zp\\&\cong\frac{\Lambda^{\oplus k}}{{(D-A(1+T)-A^t(1+T)^{-1})\Lambda^{\oplus k}+((1+T)^{p^n}-1)\Lambda^{\oplus k}}}.
        \end{align*}
      
    As $(\hat X_n)_{n\ge n_0}$ is a $\Zp$-tower,
\[\left(\varprojlim_{n\ge n_0}\Pic(\hat X_n)\otimes\Zp\right)^{p^{n_0}}\cong
\left(\frac{\Lambda^{\oplus k}}{F(T){\Lambda^{\oplus k}}}\right)^{p^{n_0}}\cong\frac{\Lambda^{\oplus k}}{(D-A(1+T)-A^t(1+T)^{-1})\Lambda^{\oplus k}},\]
    where $F(T)$ is a $k\times k$ matrix given by some $\Zp$-voltage assignment on $X$ as in Remark~\ref{rk:undirected}. Furthermore, $\det F(T)=p^{\mu(X)}Tg(T)v(T)$, where $g(T)$ is a distinguished polynomial of degree $\lambda(X)$ and $v(T)\in\Zp[[T]]^\times$. The isomorphism above implies that the power series $\det F(T)^{p^{n_0}}$ and $M_X(T)$ agree up to a unit of $\Zp[[T]]$. Hence, the proposition follows.
    \end{proof}

\section{Results on the $\mu$-invariant of a constant $\Zp$-tower}
We prove Theorem~\ref{thmB} in this section. We then apply it to study the  $\mu$-invariant of towers of isogeny graphs introduced in \cite{LM2}.

\subsection{Proof of Theorem~\ref{thmB}}
We begin with part (a) of the theorem.
\begin{theorem}\label{thm:mu>0}
    Let $X$ be a finite connected directed graph {such that $\tilde X$} contains a cycle. Let $(X_n)_{n\ge0}$ be a constant $\Zp$-tower over $X$. Suppose that $p$ divides the in-degree and out-degree of each vertex of $X$, then the $\mu$-invariant of the tower $(X_n)_{n\ge0}$ is positive.
\end{theorem}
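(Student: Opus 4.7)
The plan is to show that $p$ divides the power series $M_X(T) = \det(D - A(1+T) - A^t(1+T)^{-1})$ of Definition~\ref{def:MXT}. By Remark~\ref{rk:mu-lambda} together with Proposition~\ref{prop:disconnected}, this is equivalent to $\mu(X) > 0$: indeed the proposition gives $M_X(T) = (p^{\mu(X)} T g(T))^{p^{n_0}} u(T)$, so the minimum $p$-adic valuation of the coefficients of $M_X(T)$ equals $p^{n_0}\mu(X)$, which is positive if and only if $\mu(X) \ge 1$.

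Hence the task reduces to showing $\overline{M_X(T)} = 0$ in $\FF_p[[T]]$, where the overline denotes coefficient-wise reduction mod $p$. The hypothesis that $p \mid d_i(v)$ and $p \mid d_o(v)$ for every vertex $v$ implies that every diagonal entry $d_i(v)+d_o(v)$ of $D$ lies in $p\Zp$; therefore $D \equiv 0 \pmod p$, and the mod-$p$ reduction of the matrix whose determinant defines $M_X(T)$ is
\[
\overline{M} := -\overline{A}(1+T) - \overline{A}^t(1+T)^{-1} \;\in\; \FF_p[[T]]^{k\times k},
\]
where $k = |\VV(X)|$ and $(1+T)^{-1} \in \FF_p[[T]]$ makes sense since $1+T$ has constant term~$1$.

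The key observation is that the all-ones column vector $\mathbf{1}=(1,\dots,1)^t$ lies in the kernel of $\overline{M}$. Indeed, $A\mathbf{1}$ is the column vector of out-degrees $(d_o(v))_v$ and $A^t\mathbf{1}$ is the column vector of in-degrees $(d_i(v))_v$; by hypothesis both vectors reduce to $0$ in $\FF_p^k$, so $\overline{A}\mathbf{1}=\overline{A}^t\mathbf{1}=0$, hence $\overline{M}\,\mathbf{1}=0$. Since $\mathbf{1}\neq 0$ in $\FF_p^k$, the matrix $\overline{M}$ is singular over the fraction field of $\FF_p[[T]]$, so $\det(\overline{M}) = 0$. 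This gives $M_X(T) \equiv 0 \pmod p$, and therefore $\mu(X) > 0$.

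No real obstacle is expected in this argument; the only point requiring a line of care is the passage between the valuation of $M_X(T)$ and the invariant $\mu(X)$ when $n_0 \ge 1$, which is handled by Proposition~\ref{prop:disconnected} as recalled in the first paragraph. Everything else is a direct matrix computation over $\FF_p[[T]]$.
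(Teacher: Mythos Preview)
Your proof is correct and is essentially the same as the paper's: the paper observes that the sum of the columns of $D-A(1+T)-A^t(1+T)^{-1}$ has $v$-th entry $d_i(v)+d_o(v)-d_o(v)(1+T)-d_i(v)(1+T)^{-1}\in p\Zp[[T]]$, which is exactly your statement that $\overline{M}\,\mathbf{1}=0$ in $\FF_p[[T]]^k$, and both conclude via Proposition~\ref{prop:disconnected}. Your treatment of the $n_0\ge 1$ case is slightly more explicit than the paper's, but the argument is otherwise identical.
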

\begin{proof}
  Consider
   \[
M_X(T)=\det\left(D-A(1+T)-A^t(1+T)^{-1}\right),
   \]
   as in Definition~\ref{def:MXT}. The sum of the columns of the matrix $D-A(1+T)-A^t(1+T)^{-1}$ is a column vector whose entries are given by
   \[
   d_i(v)+d_o(v)-d_o(v)(1+T)-d_i(v)(1+T)^{-1}, v\in\VV(X).
   \]
   Therefore, if $p|d_i(v)$ and $p|d_o(v)$ for all $v\in\VV(X)$, the power series $M_X(T)$ is divisible by $p$. Hence,  $\mu(X)>0$ by Proposition~\ref{prop:disconnected}.
\end{proof}

Note that if $X$ is the $l$-isogeny graph of supersingular elliptic curve studied in \cite{LM2}, the in-degree and out-degree of each vertex are both equal to $l+1$. In particular, if $p|(l+1)$, we would obtain a $\Zp$-tower that has positive $\mu$-invariant. This is a special case of Example~5.7 discussed in \textit{op. cit.}

We now prove part (b) of Theorem~\ref{thmB}, which can be considered as a partial converse of part (a).

\begin{theorem}
\label{thm:mu=0}
    Let $X$ be a finite connected directed graph {such that $\tilde X$} contains a cycle. Assume that $d_i(v)+d_o(v)$ takes the same value $k$ for all $v\in\VV(X)$. Furthermore, assume that $p\nmid k$ and that the adjacency matrix $A$ is normal. Then $\mu(X)=0$. 
\end{theorem}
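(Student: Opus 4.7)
The plan is to reduce the claim to showing that $M_X(T)\not\equiv 0\pmod p$ in $\Fp[[T]]$. By Proposition~\ref{prop:disconnected} we may write $M_X(T) = (p^{\mu(X)} T g(T))^{p^{n_0}} u(T)$ with $u(T)\in\Zp[[T]]^\times$, so $p\mid M_X(T)$ whenever $\mu(X)>0$, and it therefore suffices to exhibit some coefficient of $M_X(T)$ that is coprime to $p$. The hypothesis $d_i(v)+d_o(v)=k$ forces the valency matrix to be $D=kI$, so
\[
M_X(T) = \det\bigl(kI - A(1+T) - A^t(1+T)^{-1}\bigr).
\]

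The crucial observation is that the normality relation $AA^t = A^tA$ is a polynomial identity in the entries of $A$, hence persists after reduction modulo $p$: the images $\bar A,\bar A^t\in\mathrm{M}_r(\Fp)$ still commute (where $r=|\VV(X)|$). Commuting matrices over the algebraically closed field $\overline{\Fp}$ admit a simultaneous upper triangularization, so we can pick $U\in\GL_r(\overline{\Fp})$ for which both $U^{-1}\bar A U$ and $U^{-1}\bar A^t U$ are upper triangular with diagonal entries $(\alpha_1,\dots,\alpha_r)$ and $(\beta_1,\dots,\beta_r)$ respectively. Conjugating the matrix appearing inside $\bar M_X(T)$ by $U$ preserves its determinant, so
\[
\bar M_X(T) = \prod_{j=1}^{r}\bigl(k - \alpha_j(1+T) - \beta_j(1+T)^{-1}\bigr) \in \overline{\Fp}[[T]].
\]

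The remaining step is to verify that each factor is nonzero in the integral domain $\overline{\Fp}[[T]]$. Multiplying the $j$-th factor by the unit $1+T$ produces the polynomial $-\alpha_j T^2 + (k-2\alpha_j)T + (k-\alpha_j-\beta_j)$; if $\alpha_j\neq 0$ its leading coefficient is nonzero, while if $\alpha_j = 0$ its $T$-coefficient equals $k$, which is nonzero in $\Fp$ by the assumption $p\nmid k$. Hence every factor is nonzero, their product does not vanish, and so $\bar M_X(T)\ne 0$, yielding $\mu(X)=0$.

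The only mildly subtle step, and in my view the key conceptual move of the argument, is recognizing that normality \emph{by itself} (rather than diagonalizability, symmetry, or a finer spectral hypothesis) is enough to allow simultaneous upper triangularization of $A$ and $A^t$ after reduction modulo $p$; once this is in place, the determinant splits into scalar factors whose nonvanishing is forced by the coprimality of $k$ and $p$, and everything else is a mechanical check.
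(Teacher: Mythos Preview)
Your proof is correct and follows a genuinely different route from the paper's. Both arguments reduce (via Proposition~\ref{prop:disconnected}) to showing that $M_X(T)\not\equiv 0\pmod p$, but the paper diagonalizes $A$ and $A^t$ simultaneously over $\CC$ using the spectral theorem for normal matrices, obtaining eigenvalues $\alpha_i$ and their complex conjugates $\overline\alpha_i$, and then argues with $p$-adic valuations in the ring of integers of $\Qp(\alpha_1,\dots,\alpha_r)$: if the constant term $k-\alpha_i-\overline\alpha_i$ has positive valuation, one deduces from $p\nmid k$ that $\alpha_i+\overline\alpha_i$ is a unit, and then either the linear or quadratic coefficient must be a unit. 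Your argument instead reduces modulo $p$ first, observes that normality is just the commutation relation $AA^t=A^tA$ (which survives reduction), and simultaneously upper-triangularizes $\bar A$ and $\bar A^t$ over $\overline{\Fp}$; after clearing $(1+T)^{-1}$ the factor becomes a quadratic polynomial whose nonvanishing needs only the dichotomy $\alpha_j\ne 0$ versus $\alpha_j=0$ (where the $T$-coefficient is $k$). Your approach is more elementary---it avoids the detour through $\CC$ and the extension $\Qp(\alpha_1,\dots,\alpha_r)$, and makes no use of the complex-conjugate pairing of eigenvalues---while the paper's approach extracts slightly more arithmetic information about the spectrum that is not actually needed for the conclusion.
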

\begin{proof} By Proposition~\ref{prop:disconnected}, it suffices to show that the power series
    \[
M_X(T)=\det\left(D-A(1+T)-A^t(1+T)^{-1}\right)
   \]
   is not divisible by $p$.
The valency matrix $D$ is equal to $kI$. Since $A$ is a normal matrix, there exists a unitary matrix $U$ such that $U^*AU=\mathrm{diag}(\alpha_1,\dots,\alpha_r)$ for some complex numbers $\alpha_1,\dots,\alpha_r$. Note that since $A$ is defined over $\ZZ$, all the eigenvalues $\alpha_i$ are algebraic integers. Furthermore, we have $U^*A^tU=\mathrm{diag}(\overline\alpha_1,\dots,\overline\alpha_r)$, where $\overline\alpha_i$ denotes the complex conjugation of $\alpha_i$.

   We have
   \begin{align*}
   M_X(T)&=\prod_{i=1}^r\left(k-\alpha_i(1+T)-\overline\alpha_i(1+T)^{-1}\right)\\
   &=\prod_{i=1}^r\left(k-\alpha_i-\overline\alpha_i+(\overline{\alpha}_i-\alpha_i)T+\overline\alpha_i(-T^2+T^3-\dots)\right).
        \end{align*}
   Therefore, it suffices to prove that each power series \[k-\alpha_i-\overline\alpha_i+(\overline{\alpha}_i-\alpha_i)T-\overline\alpha_i(-T^2+T^3-\dots)\] does not belong to $\fm[[T]]$, where $\fm$ denotes the maximal ideal of the {ring of integers of the }field $\Qp(\alpha_1,\dots,\alpha_r)$.

   If $v_p(k-\alpha_i-\overline\alpha_i)=0$, then our claim is immediately valid. If there exists $i$ such that $v_p(k-\alpha_i-\overline\alpha_i)>0$, then $v_p(\alpha_i+\overline\alpha_i)=0$ since $p\nmid k$. As $$\alpha_i+\overline\alpha_i=2\overline\alpha_i-(\overline\alpha_i-\alpha_i),$$
   we have either $v_p(\overline{\alpha_i})=0$ or $v_p(\overline\alpha_i-\alpha_i)=0$.  Thus, our claim follows.
\end{proof}

\begin{remark}
If $A$ is symmetric, which is equivalent to that every edge in $X$ admits an edge going in the opposite direction, then $A$ is normal. 

More generally, as discussed in \cite[P.2]{Jorgensen}, the adjacency matrix $A$ is normal if and only if for any two vertices (not necessarily distinct) they have the same number of common out-neighbors as common in-neighbors.
\end{remark}

\subsection{Application to supersingular isogeny graphs}\label{S:isogeny}

In this subsection,  we apply the general results obtained so far to the isogeny graphs introduced in \cite{LM2}. Throughout, $p$ and $r$ are fixed distinct prime numbers, $N$ is a natural number coprime to $rp$, and $\ell$ is a prime such that $\ell\equiv 1\pmod{Np}$. Recall that $Y^{\ss}$ is the supersingular subgraph of $Y_l^q(Np)$, the vertices of which are isomorphic classes of elliptic curves defined over $\FF_q$, where $q$ is an even power of $r$, together with a level structure $(R_1,R_2,\zeta)$ such that $\{R_1,R_2\}$ is a basis of $E[N]$ and $\zeta$ is a primitive $p^n$-th root of unity (see Definition \ref{def:intro}). For $n\ge1$, $Y^{\ss}_n$ is the pre-image of $Y^\ss$ in $Y_l^q(Np^n)$ . 

\begin{lemma}\label{lem:Y0}
Let $Y_0$ be the image of $Y^\ss$ in $Y_l^q(N)$ under the natural projection map $Y_l^q(Np)\rightarrow Y_l^q(N)$. Let $\beta_n$ be the $(\ZZ/p^{n}\ZZ)^\times$-valued voltage assignment on $Y_0$ that sends every edge to $l\mod p^n$. Then $$Y_n^\ss\cong Y_0((\ZZ/p^{n}\ZZ)^\times,\beta_n).$$
   \end{lemma}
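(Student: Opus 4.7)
The plan is to construct an explicit isomorphism of directed graphs $\Theta\colon Y_0((\ZZ/p^n\ZZ)^\times,\beta_n)\to Y_n^\ss$. The key algebraic input is the transformation law of the Weil pairing under an $\ell$-isogeny: for $\phi\colon E\to E'$ of degree $\ell$ and $P_1,P_2\in E[p^n]$, we have
\[\langle\phi(P_1),\phi(P_2)\rangle_{E'}=\langle P_1,P_2\rangle_E^{\ell},\]
which follows from $\hat\phi\circ\phi=[\ell]$ together with the adjointness identity $\langle\phi(P),Q\rangle_{E'}=\langle P,\hat\phi(Q)\rangle_E$.

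The vertex bijection comes from fixing a primitive $p^n$-th root of unity $\zeta_{p^n}$ and writing every primitive $p^n$-th root uniquely as $\zeta_{p^n}^a$ with $a\in(\ZZ/p^n\ZZ)^\times$. Since each supersingular $E$ has $E[p^n]\cong(\ZZ/p^n\ZZ)^2$ (using $p\neq r$) and a non-degenerate Weil pairing, every primitive $p^n$-th root is realised as $\langle P_1,P_2\rangle$ for some basis of $E[p^n]$, so the assignment $\Theta\bigl((E,R_1,R_2),a\bigr)=(E,R_1,R_2,\zeta_{p^n}^a)$ is a bijection of vertex sets. For edges, let $v=(E,R_1,R_2,\zeta_{p^n}^a)\in V(Y_n^\ss)$ with fixed lift $v'=(E,R_1+P_1,R_2+P_2)\in V(X_l^q(Np^n))$ chosen so that $\langle P_1,P_2\rangle=\zeta_{p^n}^a$. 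By Definition~\ref{def:intro}, edges of $Y_n^\ss$ out of $v$ correspond to $\ell$-isogenies $\phi\colon E\to E'$ defined over $\FF_q$, which are exactly the $\ell$-isogenies indexing edges out of $(E,R_1,R_2)$ in $Y_0$; the target in $Y_n^\ss$ is
\[\Psi_n\bigl(E',\phi(R_1)+\phi(P_1),\phi(R_2)+\phi(P_2)\bigr)=(E',\phi(R_1),\phi(R_2),\zeta_{p^n}^{a\ell})\]
by the Weil pairing identity. On the other hand, the edge $(e,a)$ in the derived graph (where $e$ is the edge of $Y_0$ coming from $\phi$) goes from $((E,R_1,R_2),a)$ to $((E',\phi(R_1),\phi(R_2)),a\ell)$ by the definition of $\beta_n$. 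Hence $\Theta$ sends the edge $(e,a)$ to the edge $\phi$ of $Y_n^\ss$ and preserves source and target, yielding the desired isomorphism.

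The main subtlety lies in the bookkeeping inherent in Definition~\ref{def:intro}'s dependence on fixed lifts: one must verify that, for each vertex of $Y_n^\ss$, the set of $\ell$-isogenies indexing outgoing edges is canonically identified with that indexing outgoing edges from the projected vertex of $Y_0$. This can be arranged by choosing the fixed lifts in $X_l^q(Np^n)$ coherently with those determining $Y_0$ in $X_l^q(N)$, after which the correspondence on edges reduces to the Weil pairing computation above.
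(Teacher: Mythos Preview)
Your proof is correct and shares with the paper's argument the crucial Weil-pairing identity $\langle\phi(P_1),\phi(P_2)\rangle_{E'}=\langle P_1,P_2\rangle_E^{\ell}$. The route, however, differs: the paper does not build the isomorphism $\Theta$ by hand but instead invokes a general structural result (Proposition~5.2 of \cite{LM2}) which already identifies $Y_n^\ss$ with a derived graph $Y_0((\ZZ/p^n\ZZ)^\times,\beta_n)$ for the voltage assignment sending an edge $e$ (arising from $\phi\colon E\to E'$) to $\det(g_e)\bmod p^n$, where $g_e\in\GL_2(\Zp)$ expresses $\phi$ on fixed Tate-module bases $\{s_E,t_E\}$ normalised so that $\langle s_E,t_E\rangle_E$ is a generator of $\Zp(1)$ independent of $E$. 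The Weil-pairing identity then gives $\det(g_e)=\ell$, whence the assignment is constant. Your direct construction is more self-contained and avoids the external reference; the paper's approach is shorter once that input is granted. Incidentally, the ``subtlety'' you flag at the end is harmless here: the outgoing edges in both $Y_n^\ss$ and $Y_0$ from any vertex with underlying curve $E$ are indexed by the \emph{same} set of $\ell$-isogenies from $E$, and your target computation $(E',\phi(R_1),\phi(R_2),\zeta^{\ell})$ is visibly independent of the chosen $(P_1,P_2)$, so no coherent choice of lifts is actually needed.
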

\begin{proof}
Let $S$ be the set of representatives of elliptic curves given as in the introduction. For each $E\in S$, we fix a basis $\{s_E,t_E\}$ of the Tate module $T_p(E)$ so that the Weil pairing $\langle s_E,t_E\rangle_E$ is a topological generator of $\Zp(1)$ that is independent of $E$. 

Let $e\in \EE(Y_0)$ be an edge that arises from an $l$-isogeny $\phi:E\to E'$, where $E,E'\in S$. We define $g_e\in\GL_2(\Zp)$ to be the matrix
\[
\begin{pmatrix}
    \phi(s_E)\\ \phi(t_E)
\end{pmatrix}=g_e\cdot \begin{pmatrix}
    s_{E'}\\t_{E'}
\end{pmatrix},
\]
as in \cite[Definition~2.7]{LM2}. If we write $\beta_n$ to be the voltage assignment on $Y_0$ that sends $e$ to $\det(g_e)\mod p^n$, Proposition 5.2 of \textit{op. cit.} says that
$$Y_n^\ss\cong Y_0((\ZZ/p^{n}\ZZ)^\times,\beta_n).$$
As discussed in Remark 5.3 of \textit{op. cit.}, since $\phi$ is an $l$-isogeny, we have $$\langle \phi (s_E),\phi(t_E)\rangle_{E'}=\langle s_E,t_E\rangle_E^l.$$
Thus, $\det(g_e)=l$, which implies that $\beta_n$ sends every edge of $Y_0$ to $l\mod p^n$, as desired.
\end{proof}

\begin{lemma}\label{lem:constant}
Suppose that $p>2$. Let $\alpha$ be a constant $\Zp$-voltage assignment on $Y^\ss$ that takes values in $\Zp^\times$.  Then $Y_{n+1}^\ss\cong Y^\ss(\ZZ/p^n\ZZ,\alpha_{/n})$ for all $n\ge0$.
\end{lemma}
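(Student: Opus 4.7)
The plan is to realise $Y^{\ss}_{n+1}$ as the derived graph of a constant voltage assignment on $Y^{\ss}$, and then invoke Lemma~\ref{lem:iso} to identify this derived graph with $Y^{\ss}(\Z/p^n\Z,\alpha_{/n})$. The starting point is Lemma~\ref{lem:Y0}, which describes $Y^{\ss}_{n+1}$ as $Y_0((\Z/p^{n+1}\Z)^\times,\beta_{n+1})$ with $\beta_{n+1}$ the constant voltage assignment of value $\ell$, and (taking $n=0$) $Y^{\ss} = Y_0((\Z/p\Z)^\times,\beta_1)$. The strategy is to factor the cover $Y^{\ss}_{n+1}/Y_0$ through $Y^{\ss}/Y_0$ and analyse the intermediate cover $Y^{\ss}_{n+1}/Y^{\ss}$.

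Since $p > 2$, the Teichm\"uller character yields a canonical splitting $(\Z/p^{n+1}\Z)^\times \cong (\Z/p\Z)^\times \times H$, where $H := 1 + p\Z/p^{n+1}\Z$ is cyclic of order $p^n$. This identifies $V(Y^{\ss}_{n+1}) = V(Y_0) \times (\Z/p^{n+1}\Z)^\times$ with $V(Y^{\ss}) \times H$ and exhibits the projection $Y^{\ss}_{n+1}\to Y^{\ss}$ as the first-coordinate projection. Writing $a = \omega(\bar a)\cdot h$ and using $\ell \equiv 1 \pmod p$ so that $\omega(\bar\ell) = 1$, an edge of $Y^{\ss}_{n+1}$ from $(v,a)$ to $(v',a\ell)$ then takes the form $(v,\bar a, h) \mapsto (v',\bar a, h\ell)$. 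Since the shift in the $H$-coordinate is the element $\ell \in H$ for every edge, independently of the sheet $\bar a$, this presents $Y^{\ss}_{n+1}$ as $Y^{\ss}(H,\gamma)$, where $\gamma$ is the constant voltage assignment of value $\ell \in H$.

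Fixing a topological generator of $1 + p\Zp$ produces an isomorphism $H \cong \Z/p^n\Z$ under which $\gamma$ becomes a constant voltage assignment whose value lies in $(\Z/p^n\Z)^\times$; this uses implicitly that $\ell$ topologically generates $1 + p\Zp$ (i.e.\ $\ell \not\equiv 1 \pmod{p^2}$), which is precisely what guarantees that $Y^{\ss}_{n+1}/Y^{\ss}$ is a genuine $\Zp$-tower. Given $\alpha \in \Zp^\times$ as in the statement, multiplication by the ratio of these two units defines an automorphism $\Phi$ of $\Z/p^n\Z$ carrying $\alpha_{/n}$ onto the image of $\gamma$, so Lemma~\ref{lem:iso} produces the desired isomorphism $Y^{\ss}(\Z/p^n\Z, \alpha_{/n}) \cong Y^{\ss}(H, \gamma) \cong Y^{\ss}_{n+1}$.

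The main technical hurdle lies in the second paragraph: one must carefully verify, using the Teichm\"uller splitting together with $\ell \equiv 1 \pmod p$, that the shift in the $H$-coordinate is truly independent of the base sheet $\bar a$ and of the underlying edge of $Y^{\ss}$. Once this edge computation is pinned down, the conclusion reduces to the unit-change-of-voltage principle recorded in Lemma~\ref{lem:iso}, together with the uniqueness (Corollary~\ref{cor:unique}) of constant $\Zp$-towers.
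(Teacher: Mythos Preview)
Your proof follows the same strategy as the paper's: both identify $Y^\ss_{n+1}$ with $Y^\ss(U^{(1)}_{n+1},\beta_{n+1})$ by exploiting the splitting $(\Z/p^{n+1}\Z)^\times \cong (\Z/p\Z)^\times \times (1+p\Z/p^{n+1}\Z)$ together with $\ell\equiv 1\pmod p$, and then transport to $\Z/p^n\Z$ via an isomorphism $1+p\Zp\cong\Zp$ (the paper uses $\tfrac{1}{p}\log_p$ and builds a specific $\alpha$, whereas you invoke Lemma~\ref{lem:iso} directly). Your observation that the argument tacitly requires $\ell\not\equiv 1\pmod{p^2}$ for the resulting constant voltage to land in $\Zp^\times$ is correct and worth flagging; the paper's own proof has the same silent dependence.
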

\begin{proof}
Let $Y_0$ and $\beta_n$ be defined as in the statement of Lemma~\ref{lem:Y0}.
As we have assumed that $l\equiv 1\mod p$, the graph $Y^\ss$ is isomorphic to $p-1$ disjoint copies of $Y_0$ and the image of $\beta_n$ lies inside the subgroup $U_n^{(1)}$ of $(\ZZ/p^n\ZZ)^\times$ that consists of elements that are congruent to $1$ modulo $p$. Consequently, we deduce that
\[
Y_n^\ss\cong Y_0((\ZZ/p^{n}\ZZ)^\times,\beta_n)\cong Y^\ss(U_n^{(1)},\beta_n)
\]
for all $n\ge1$.
Therefore, the lemma follows after taking $\alpha$ to be the composition
\[
\EE(Y^\ss)\rightarrow 1+p\Zp\cong\Zp,
\]
where the first arrow sends all edges to $l$ (which belongs to $1+p\Zp$ as $l\equiv 1\mod p$) and the last isomorphism is given by $\frac{1}{p}\cdot \log_p$. Here, $\log_p$ denotes the $p$-adic logarithm.
\end{proof}

\begin{proposition}\label{prop:constant}
Assume that $p>2$. Then  $m_0$ is the minimal index where the number of connected components of $Y_n^\ss$ stabilizes. Furthermore, the number of connected components of $Y_n^\ss$ and $X_n^\ss$ are the same. 
\end{proposition}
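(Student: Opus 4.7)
The strategy is to apply the general theory developed earlier in this paper to $(Y_n^\ss)_{n\ge 1}$ via Lemma~\ref{lem:constant}, and then to compare with $(X_n^\ss)_{n\ge 1}$ through the vertex-projection $\Psi_n$. By Lemma~\ref{lem:constant}, the tower $(Y_n^\ss)_{n\ge 1}$ is the derived-graph tower associated with a constant $\Z_p$-valued voltage assignment on $Y^\ss$ whose parameter lies in $\Z_p^\times$. Because $\ell\equiv 1\pmod p$, the proof of that lemma shows that $Y^\ss$ decomposes into $p-1$ isomorphic copies of $Y_0$; applying Proposition~\ref{prop:stabilize} to each connected component then yields that the number of connected components of $Y_n^\ss$ is uniformly bounded and stabilizes at some minimal index.

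The heart of the argument is to prove that $\Psi_n$ induces a bijection between the sets of connected components of $X_n^\ss$ and $Y_n^\ss$ for every $n\ge 1$. A direct verification, using the defining construction of edges in $Y_l^q(Np^n)$ from Definition~\ref{def:intro} together with the Galois-equivariance of $\Psi_n$ (whereby the action of any $\textup{SL}_2$-element in the Galois group of $X_n^\ss/X^\ss$ is trivialised after projecting to $Y_n^\ss$), shows that any edge of $X_n^\ss$ descends via $\Psi_n$ to an edge of $Y_n^\ss$. Hence $\Psi_n$ sends paths to paths and induces a surjective map between the sets of connected components. For injectivity, a standard path-lifting argument, moving the lifted endpoint by the deck-transformation action on a fixed preimage, reduces the problem to showing that for every vertex $w\in V(Y_n^\ss)$, the entire fiber $\Psi_n^{-1}(w)$ lies in a single connected component of $X_n^\ss$.

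The main obstacle is this fiberwise connectivity claim. The fiber $\Psi_n^{-1}(w)$ is a torsor for the $\textup{SL}_2(\Z/p^n\Z)$-action on the $p$-power part of the level structure that preserves the Weil pairing, so the problem amounts to realising each such $\textup{SL}_2$-element as the composition of a cycle of $\ell$-isogenies based at a given supersingular elliptic curve. Here the hypothesis $p>2$ is essential: it is needed already to apply Lemma~\ref{lem:constant}, and it allows one to invoke the Galois-theoretic analysis of the tower $X_n^\ss\to X^\ss$ from \cite{LM2}, where $\ell$-isogeny cycles are shown to generate the pro-$p$ $\textup{SL}_2$-subgroup of the relevant Galois group. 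Once this bijection is established, the number of connected components of $X_n^\ss$ equals that of $Y_n^\ss$ for every $n\ge 1$, and both assertions of the proposition follow at once.
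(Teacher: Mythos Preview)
Your strategy---establishing a bijection between the sets of connected components of $X_n^\ss$ and $Y_n^\ss$ via $\Psi_n$---is genuinely different from the paper's. The paper computes both component counts independently and checks that they coincide: it quotes \cite[Theorem~4.3]{LM2} to get that $X_n^\ss$ has exactly $\varphi(N)\,[(\ZZ/p^n\ZZ)^\times:\langle\ell\rangle]$ components, and then sandwiches the number of components of $Y_n^\ss$ between two copies of the same quantity. The upper bound comes from the observation that the path given by an $\ell$-isogeny $\phi$ followed by its dual $\hat\phi$ connects $(E,R_1,R_2,\zeta)$ to $(E,R_1,R_2,\zeta^{\ell^2})$, together with $\langle\ell^2\rangle=\langle\ell\rangle$ inside $(\ZZ/p^n\ZZ)^\times$ (this is where $p>2$ enters). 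The lower bound comes from the fact that any path in $Y_n^\ss$ can only change $\zeta$ by a power of $\ell$. No fiber analysis or path-lifting is required.

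Your reduction to fiberwise connectivity is correct in outline, but the justification you give for that key step has a gap. The fiber $\Psi_n^{-1}(w)$ is a torsor under the \emph{full} group $\textup{SL}_2(\ZZ/p^n\ZZ)$, as you note; hence the assertion that ``$\ell$-isogeny cycles generate the pro-$p$ $\textup{SL}_2$-subgroup'' is, even if true, not enough---you need the monodromy at a base vertex (cycles fixing the $N$-level structure) to surject onto all of $\textup{SL}_2(\ZZ/p^n\ZZ)$, not merely onto its $p$-Sylow. That statement is equivalent to saying that the connected component of a vertex of $X_n^\ss$ is determined by the class of its Weil pairing in $(\ZZ/Np^n\ZZ)^\times/\langle\ell\rangle$, which is exactly the content of \cite[Theorem~4.3]{LM2}. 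So once the gap is closed, your argument rests on the same external input as the paper's but routes it through $\Psi_n$ and a path-lifting argument (itself not entirely trivial given the section-dependent definition of edges in $Y_l^q(Np^n)$), whereas the paper's two-sided estimate on $Y_n^\ss$ is shorter and self-contained modulo that one citation.
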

\begin{proof}
    By \cite[Theorem 4.3]{LM2}, the number of connected components of $X_n^\ss$ is given by $[(\ZZ/p^nN\ZZ)^\times :\langle \ell\rangle]$. As we assume that $\ell\equiv 1\pmod{Np}$, this number is equal to $\varphi(N)[(\ZZ/p^n\ZZ)^\times:\langle \ell \rangle]$, where $\varphi$ is Euler's totient function.  Let $v=(E,R_1,R_2,\zeta)$ be a vertex of $Y^\ss_n$. Let $\phi$ be an $\ell$-isogeny on $E$. We have
    \[\widehat{\phi}\circ \phi(E,R_1,R_2,\zeta)=(E,R_1,R_2,\zeta^{\ell^2}),\] 
    meaning that $(E,R_1,R_2,\zeta)$ and $(E,R_1,R_2,\zeta^{\ell^2})$ lie in the same connected component of $Y_n^\ss$.
    Note that $Y_0$ has $\varphi(N)$ connected components.
    Thus, the number of connected components of $Y_n^{\ss}$ is bounded above by $$[(\ZZ/p^n\ZZ)^\times:\langle \ell^2 \rangle]\varphi(N)=[(\ZZ/p^n\ZZ)^\times:\langle \ell \rangle]\varphi(N),$$ 
    as $p$ is odd.
    
    If there is a path from $(E,R_1,R_2,\zeta)$ to $(E,R_1,R_2,\zeta^k)$ in $Y_n^\ss$, the integer $k$ has to be a power of $\ell$. Therefore, the number of connected components of $Y_n^\ss$ is at least $\varphi(N)[(\ZZ/p^n\ZZ)^\times:\langle \ell \rangle]$.
    This shows that indeed $m_0$ is the minimal index where the number of connected components of $Y_n^\ss$ stabilizes as $n$ increases and that this number is equal to the number of connected components of $X_n^\ss$.  
\end{proof}

For each $n\ge m_0$, let $Z_n'$ be the image of $Z_n$ in $Y_n^{\ss}$ under the natural projection $X_n^\ss\rightarrow Y_n^\ss$. Combining Lemma~\ref{lem:constant} with Propositions~\ref{prop:disconnected} and \ref{prop:constant} implies that $(Z'_n)_{n\ge m_0}$ is a constant $\Z_p$-tower over $Z_{m_0}'$.  Note that $Z_n$ is a covering of $Z'_{m_0}$ with Galois group
\[G_n=\left\{x\in \textup{GL}_2(\ZZ/p^nN\ZZ)\mid \det(x)\equiv 1 \pmod {p^{m_0}N},\ x\equiv 1\pmod {Np}\right\}.\] Let $G=\varprojlim_n G_n$. Then $(Z_n)_{n\ge m_0}$ can be regarded as a "$G$-tower" of $Z'_{m_0}$ and $(Z'_n)_{n\ge m_0}$ is a "$\Z_p$-subtower" (in the language of \cite{KM}).

\begin{corollary}\label{cor:isogeny} Assume that $p>2$.
    Then $\mu(Z'_{m_0})=0$. 
\end{corollary}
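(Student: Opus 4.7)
The plan is to apply Theorem~\ref{thm:mu=0} to the graph $Z'_{m_0}$, which requires verifying that $\widetilde{Z'_{m_0}}$ contains a cycle, that the total degree $d_i(v)+d_o(v)$ is a constant $k$ with $p\nmid k$, and that the adjacency matrix $A$ of $Z'_{m_0}$ is normal. Since $(Z'_n)_{n\ge m_0}$ is a nontrivial constant $\Zp$-tower over $Z'_{m_0}$ (established in the discussion immediately preceding the corollary), Corollary~\ref{cor:exist} immediately yields the cycle condition.

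For the degree condition, every vertex $v=(E,R_1,R_2,\zeta)$ of $Z'_{m_0}$ corresponds to a supersingular elliptic curve $E$; by Definition~\ref{def:intro}, the $l+1$ cyclic order-$l$ subgroups of $E[l]$ yield exactly $l+1$ outgoing edges from $v$, and by the dual isogeny involution the incoming edges are also counted by $l+1$. Hence $k=2(l+1)$. Since $l\equiv 1\pmod{Np}$ forces $l\equiv 1\pmod p$, we have $k\equiv 4\pmod p$, which is nonzero as $p>2$.

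The main step is to show that the adjacency matrix $A$ of $Z'_{m_0}$ is normal. Duality $\phi \mapsto \hat\phi$ pairs up $l$-isogenies in opposite directions: since $\hat\phi\circ\phi=[l]$ acts trivially on $E[N]$ (as $l\equiv 1\pmod N$) and multiplies the Weil pairing on the $p^{m_0}$-part by $l^2$, it induces a graph automorphism $\sigma$ of $Z'_{m_0}$ sending $(E,R_1,R_2,\zeta)$ to $(E,R_1,R_2,\zeta^{l^2})$. The dual-isogeny correspondence then identifies edges $v\to w$ with edges $w\to \sigma(v)$, yielding $A^t=\pi A$ for the permutation matrix $\pi$ of $\sigma$. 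Since $\sigma$ is a graph automorphism it commutes with $A$, so $AA^t=A^tA$, establishing normality. The main obstacle is precisely this last paragraph: carefully tracking the duality through the passage from $X_l^q(Np^{m_0})$ to $Y_l^q(Np^{m_0})$ and the restriction to the component $Z'_{m_0}$, in order to verify the clean matrix identity $A^t=\pi A$ with $\pi$ commuting with $A$. Once normality is in hand, Theorem~\ref{thm:mu=0} immediately gives $\mu(Z'_{m_0})=0$.
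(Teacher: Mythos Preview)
Your proposal is correct and follows exactly the same strategy as the paper, namely verifying the hypotheses of Theorem~\ref{thm:mu=0}. However, you work harder than necessary on the normality step. By the very definition of $m_0$ (the minimal level at which the number of connected components of $X_n^{\ss}$, equivalently $Y_n^{\ss}$, stabilizes), one has $\ell\equiv 1\pmod{p^{m_0}}$: from Proposition~\ref{prop:constant} the number of components is $\varphi(N)\,[(\ZZ/p^n\ZZ)^\times:\langle\ell\rangle]$, and for $p>2$ this stabilizes precisely at $n=v_p(\ell-1)$. Consequently your automorphism $\sigma$ sends $\zeta\mapsto\zeta^{\ell^2}=\zeta$ and is the identity, so $\pi=I$ and the identity $A^t=\pi A$ collapses to $A^t=A$. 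The paper's proof exploits this symmetry directly: since $\ell\equiv 1$ modulo the relevant level, an isogeny and its dual give edges in opposite directions between the \emph{same} pair of vertices, so the adjacency matrix is symmetric (hence normal) without any permutation argument. Your route is not wrong, just superfluous once one notices what $m_0$ really is.
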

\begin{proof}
As $\ell\equiv 1\pmod{Np}$, an isogeny and its dual generate edges in opposite directions in $X^\ss$. Therefore, the adjacency matrix of $X$ is symmetric and in particular normal. Furthermore, the in-degree and out-degree of each vertex are both equal to $l+1$. As $l\equiv 1\mod p$, we have $p\nmid (l+1)$. Combining Lemma~\ref{lem:constant} and Theorem \ref{thm:mu=0} gives $\mu(Z_{m_0}')=0$. 
\end{proof}

\begin{remark}
    We studied $\Zp$-towers of isogeny graphs in \cite{LM1,LM3}, where $p=r$. The results in the present article do not apply immediately to these towers since they are not a priori constant $\Zp$-towers.
\end{remark}

Let $H\subset G$ be the {maximal} subgroup of $G$ that acts trivially on $Z'_n$ for all $n\ge m_0$. Let $\Lambda=\Z_p[[G]]$. 
 We say that the $\mathfrak{M}_H(G)$-property holds for $(Z_n)_{n\ge m_0}$ if 
\[(\textup{Pic}(Z_\infty)\otimes \Lambda)/(\textup{Pic}(Z_\infty)\otimes \Lambda)[p^\infty]\]
is finitely generated as a $\Z_p[[H]]$-module. This formulation is the analogue of the $\mathfrak{M}_H(G)$-property in classical non-commutative Iwasawa theory introduced in \cite{CFKSV}. 
If the $\mathfrak{M}_H(G)$-property holds for the tower $(Z_n)_{n\ge m_0}$, one can formulate a $K$-theoretic Iwasawa main conjecture. To do so, let $S$ be the subset of all elements in $\Lambda$ such that $\Lambda /(f)$ is finitely generated over $\Z_p[[H]]$. Define $S^*=\bigcup_{k\ge0}p^kS$. Let $K_1(\Lambda_{S^*})$ be the $K_1$ group of ${S^*}^{-1}\Lambda$ and let $K_0(\Lambda \textup{ on } S^*)$ be the $K_0$ group of the category of bounded complexes of projective $\Lambda$-modules that are ${S^*}^{-1}\Lambda$ exact. Then there is a natural differential
\[\partial \colon K_1(\Lambda_{S^*})\to  K_0(\Lambda \textup { on }S^*).\]
We recall the following theorem.
\begin{theorem}\label{main-conj}\cite[Theorem 7.3]{KM} If the $\mathfrak{M}_H(G)$-property holds for the tower $(Z_n)_{n\ge m_0}$, 
    there exists an element $P\in  K_1(\Lambda_{S^*})$ interpolating the Ihara L-function $h(\rho,Z_n/Z_{m_0},1)$ such that $\partial(P)=[\Pic(Z_\infty)\otimes \Lambda)]\in K_0(\Lambda \textup{ on } S^*)$.
\end{theorem}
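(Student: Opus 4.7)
Since the statement is recalled verbatim from \cite[Theorem~7.3]{KM}, the proof in the present article is by reference. Nevertheless, the plan for reconstructing the argument would follow the framework of Coates--Fukaya--Kato--Sujatha--Venjakob \cite{CFKSV} for the non-commutative main conjecture, adapted to the graph-theoretic setting.

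First, I would use the $\mathfrak{M}_H(G)$-property together with the boundedness of $(\Pic(Z_\infty)\otimes\Lambda)[p^\infty]$ (killed by some fixed power of $p$) to conclude that $\Pic(Z_\infty)\otimes\Lambda$ becomes trivial after inverting $S^*$. A finite projective resolution of $\Pic(Z_\infty)\otimes\Lambda$ will then be $\Lambda_{S^*}$-exact, defining the class
\[
[\Pic(Z_\infty)\otimes\Lambda]\in K_0(\Lambda\ \text{on}\ S^*).
\]
Next, I would invoke the localization exact sequence
\[
K_1(\Lambda)\longrightarrow K_1(\Lambda_{S^*})\xrightarrow{\ \partial\ } K_0(\Lambda\ \text{on}\ S^*)\longrightarrow K_0(\Lambda);
\]
combined with the vanishing of the image of $[\Pic(Z_\infty)\otimes\Lambda]$ in $K_0(\Lambda)$, which is forced by its finite projective dimension as a $\Lambda$-module, this yields the existence of some preimage $P$ of $[\Pic(Z_\infty)\otimes\Lambda]$ under $\partial$.

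To pin down $P$ so that it interpolates the Ihara $L$-values $h(\rho,Z_n/Z_{m_0},1)$, the next step would be to evaluate candidate lifts against Artin representations of $G$ using the determinant map on $K_1(\Lambda_{S^*})$. Comparison between the algebraic side (determinants of a characteristic element) and the analytic side (Ihara $L$-values) will rely on a Bass--Hashimoto--Stark--Terras type formula expressing Ihara $L$-functions as determinants of twisted edge-adjacency operators, combined with the description of $\Pic$ in terms of the graph Laplacian that already appears in the proof of Proposition~\ref{prop:disconnected}. The ambiguity in $K_1(\Lambda)$ will be exploited to normalize $P$ so that Euler-factor contributions align.

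The hard part will be precisely this interpolation step: verifying that a single element $P\in K_1(\Lambda_{S^*})$ simultaneously matches the Ihara $L$-values for every Artin representation factoring through a finite quotient of $G$. I would address this either by constructing $P$ explicitly from a compatible inverse system of characteristic polynomials along intermediate Galois layers of the tower $(Z_n)_{n\ge m_0}$, or by a uniqueness argument of Weierstrass-preparation type showing that agreement on a sufficiently dense set of Artin representations forces equality in $K_1(\Lambda_{S^*})$.
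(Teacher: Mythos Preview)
Your identification is correct: the paper gives no proof of this theorem at all, simply stating it as a citation of \cite[Theorem~7.3]{KM}, so the ``proof'' here is purely by reference. Your supplementary sketch of the CFKSV-style argument is a reasonable outline of what the cited proof would entail, but none of it appears in the present paper.
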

\begin{corollary} Assume that $p>2$. 
    The $\mathfrak{M}_H(G)$-property holds for the tower $(Z_n)_{n\ge m_0}$. In particular, the conclusion of Theorem \ref{main-conj} holds for this tower.
\end{corollary}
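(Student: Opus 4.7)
The plan is to reduce the assertion to the vanishing of the $\mu$-invariant of the $\Z_p$-subtower $(Z'_n)_{n\ge m_0}$, which has already been established, and then invoke Theorem~\ref{main-conj}.

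The starting point is Corollary~\ref{cor:isogeny}, which yields $\mu(Z'_{m_0})=0$. By the structure theory underlying Remark~\ref{rk:mu-lambda} and Proposition~\ref{prop:disconnected}, this is equivalent to saying that the Iwasawa module $\varprojlim_n \Pic(Z'_n)\otimes\Z_p$, viewed as a module over $\Z_p[[T]]\cong \Z_p[[\Gal(Z'_\infty/Z'_{m_0})]]$, has trivial $\mu$-invariant, i.e.\ it is finitely generated over $\Z_p$ modulo its $p^\infty$-torsion.

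Next, I would exploit the filtration $Z_\infty\to Z'_\infty\to Z'_{m_0}$, where $Z'_\infty/Z'_{m_0}$ is the $\Z_p$-subtower with Galois group $G/H$ and $Z_\infty/Z'_\infty$ is an $H$-extension. Since $G/H\cong \Z_p$, we have $\Lambda/I_H\Lambda\cong \Z_p[[T]]$, where $I_H$ denotes the kernel of the augmentation $\Lambda\to \Z_p[[G/H]]$. The descent step is then to identify, up to $p^\infty$-torsion, the $H$-coinvariants $M_H=M\otimes_\Lambda \Z_p[[T]]$ of $M:=\Pic(Z_\infty)\otimes\Lambda$ with $\varprojlim_n \Pic(Z'_n)\otimes\Z_p$. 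This should follow from a compatibility of the voltage-assignment construction for the graph covers $Z_n\to Z'_n\to Z'_{m_0}$ with the tensor product construction, paralleling Lemma~\ref{lemma:projection-pic}. After this identification, the conclusion of step one gives that $M_H$ is finitely generated over $\Z_p$ modulo $p^\infty$-torsion.

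The final ingredient is a topological Nakayama-type lemma, standard in non-commutative Iwasawa theory in the style of \cite{CFKSV} and already used in \cite{KM}, which asserts that a compact $\Lambda$-module $M$ satisfies the $\mathfrak{M}_H(G)$-property precisely when its $H$-coinvariants, modulo $p$-torsion, are finitely generated over $\Z_p$. Applying this to $M=\Pic(Z_\infty)\otimes\Lambda$ yields the $\mathfrak{M}_H(G)$-property for $(Z_n)_{n\ge m_0}$. The second assertion of the corollary is then immediate from Theorem~\ref{main-conj}, which is stated precisely as ``$\mathfrak{M}_H(G)$-property $\Rightarrow$ $K$-theoretic main conjecture.''

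The main obstacle will be making the descent argument precise: concretely, one must check that passing from the $G$-tower to the $\Z_p$-subtower corresponds (up to bounded $p$-power torsion) to taking $H$-coinvariants of $M$, and that the $p^\infty$-torsion behaves well enough under this operation that the Nakayama-type criterion applies verbatim. Once this is verified, everything else is a bookkeeping exercise.
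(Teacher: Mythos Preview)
Your proposal is correct and follows essentially the same strategy as the paper: both start from $\mu(Z'_{m_0})=0$ (Corollary~\ref{cor:isogeny}) and deduce the $\mathfrak{M}_H(G)$-property from it. The only difference is that the paper invokes the implication ``$\mu=0\Rightarrow\mathfrak{M}_H(G)$'' as a black box, citing \cite[Corollary~8.2]{KM}, whereas you sketch the descent/Nakayama argument that presumably underlies that citation; so your ``main obstacle'' is already handled in \cite{KM} and need not be reproved here.
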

\begin{proof} By Corollary \ref{cor:isogeny} $\mu(Z'_{m_0})=0$. By \cite[Corollary 8.2]{KM} the vanishing of $\mu$ implies that the $\mathfrak{M}_H(G)$-property is valid. \end{proof}

\section{Balanced direct graphs}

We prove Theorem~\ref{thmD} in this section. We first recall the following well-known result on the number of spanning trees in an undirected graph.
\begin{theorem}[Kirchoff's formula]\label{thm:Kirchoff}
    Let $X$ be a finite connected directed graph. Let $L=D-A-A^t$ denote the Laplacian matrix of the undirected graph $X'$ given by the image of $X$ under the forgetful map. Then the number of spanning trees in $X'$ is equal to $(-1)^{i+j}\det(L_{ij})$, where $L_{ij}$ is the minor of $L$ after deleting the $i$-th row and the $j$-th column of $L$.
\end{theorem}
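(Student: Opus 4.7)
The statement is the classical Kirchhoff Matrix-Tree Theorem applied to $X'$, and I would prove it by the standard route via the Cauchy-Binet formula. Fix an arbitrary orientation of each edge of $X'$ (the one inherited from $X$ will do) and let $B$ be the resulting signed incidence matrix, with rows indexed by $\VV(X')$ and columns by $\EE(X')$, whose $(v,e)$-entry equals $+1$ if $v$ is the source of $e$, $-1$ if $v$ is the target, and $0$ otherwise. A direct computation confirms $BB^{t}=L$: diagonal entries count edges incident at each vertex, giving $d_i(v)+d_o(v)$, while the $(u,v)$-entry for $u\ne v$ contributes $-1$ for each edge joining $u$ and $v$, yielding $-(A+A^{t})$ off the diagonal.

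I would next reduce the claim to computing a single principal cofactor. Since every column of $B$ sums to zero, we have $\mathbf{1}^{t}L=0$ and $L\mathbf{1}=0$; these relations together with a short linear-algebra argument force $(-1)^{i+j}\det(L_{ij})$ to be independent of $(i,j)$. Thus, setting $n=|\VV(X')|$ and writing $B_n$ for the matrix obtained from $B$ by deleting its $n$-th row, it suffices to compute $\det(L_{nn})=\det(B_n B_n^{t})$, which by the Cauchy-Binet formula equals
$$\det(L_{nn})=\sum_{S}\bigl(\det B_n[\cdot,S]\bigr)^{2},$$
where $S$ ranges over $(n-1)$-subsets of $\EE(X')$.

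The final step, and the one I expect to be the main obstacle, is the combinatorial identification $\det B_n[\cdot,S]\in\{-1,0,+1\}$, nonzero precisely when the edges in $S$ form a spanning tree of $X'$. This is handled by induction on $n-1$: if $S$ is a spanning tree, it has a leaf, and expanding along the corresponding row of $B_n$ reduces the statement to the analogous one for the pruned tree; conversely, if $S$ is not a spanning tree, the subgraph it supports contains a cycle, producing a linear dependence among the columns of $B_n[\cdot,S]$ that forces the determinant to vanish. Substituting back, the sum equals the number of spanning trees of $X'$. Since the argument is entirely classical, one could equally well just cite a standard reference such as Stanley's \emph{Enumerative Combinatorics}.
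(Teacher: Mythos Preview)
Your argument is correct and is the standard proof of the Matrix-Tree Theorem via the factorization $L=BB^{t}$ and the Cauchy--Binet formula; the leaf-induction showing that $\det B_n[\cdot,S]\in\{-1,0,+1\}$ with nonzero value exactly on spanning trees is the usual way to finish, and your sketch of it is fine.

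There is nothing to compare against: the paper does not prove this theorem at all. It is simply recalled as a well-known result (introduced with ``We first recall the following well-known result\dots'') and then invoked in the proof of Theorem~\ref{thm:regular-balanced}. Your closing remark that one could just cite a standard reference is, in fact, exactly what the paper does implicitly. If you wanted to align with the paper's presentation you would omit the proof entirely; if you prefer to include it, what you have written is a perfectly valid self-contained justification.
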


\begin{theorem}\label{thm:regular-balanced}
      Let $X$ be a finite connected directed graph that is balanced (i.e. $d_i(v)=d_o(v)$ for all $v\in\VV(X)$). Suppose that $p>2$ or {$\tilde X$} contains a cycle whose weight is coprime to $p$. Furthermore, assume that $p\nmid k\kappa_X$, where $k$ is the total degree $\sum_{v\in\VV(X)}d_i(v)$.  Then the $\mu$-invariant of a constant $\Zp$-tower of $X$ is zero, whereas the $\lambda$-invariant is one.
\end{theorem}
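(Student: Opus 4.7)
By Proposition~\ref{prop:disconnected} we have $M_X(T) = (p^{\mu(X)} T g(T))^{p^{n_0}} u(T)$ for some distinguished polynomial $g$ of degree $\lambda(X)$ and some $u \in \Zp[[T]]^\times$, so it is enough to pin down $M_X(T)$ modulo $p$ up to order $T^2$. My plan is to establish the sharper identity
\[
M_X(T) \equiv -k\kappa_X T^2 \pmod{T^3} \quad \text{in } \Zp[[T]],
\]
and then to read off the invariants with the aid of the hypothesis $p \nmid k\kappa_X$.

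Write $B(T) = L + T(A^t - A) - T^2(1+T)^{-1} A^t$, where $L = D - A - A^t$ is the Laplacian of the underlying undirected graph, and let $D' = D_i = D_o$ be the common in/out-degree matrix supplied by the balanced hypothesis. That hypothesis also yields $A\mathbf{1} = A^t\mathbf{1} = D'\mathbf{1}$ (together with the transposed identities), so summing the columns of $B(T)$ collapses to
\[
B(T)\mathbf{1} = -T^2(1+T)^{-1} D'\mathbf{1}.
\]
A unimodular change of basis that replaces the last column of $B(T)$ by $B(T)\mathbf{1}$, followed by pulling out the scalar $-T^2(1+T)^{-1}$, then produces the factorization
\[
M_X(T) = -T^2(1+T)^{-1} \det C(T),
\]
where $C(T)$ is the matrix whose first $r-1$ columns are those of $B(T)$ and whose last column is $D'\mathbf{1}$. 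To evaluate $\det C(0)$ I would add rows $1,\ldots,r-1$ to the last row of $C(0)$: the first $r-1$ entries of the new last row vanish because $L$ has zero column sums, while the last entry becomes $\sum_{v} d'(v) = k$. Expanding along this row reduces the determinant to $k \cdot \det L_0$, where $L_0$ is the $(r,r)$-minor of $L$; Kirchhoff's formula (Theorem~\ref{thm:Kirchoff}) identifies $\det L_0 = \kappa_X$, and the desired congruence follows.

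Under $p \nmid k\kappa_X$ the leading coefficient $-k\kappa_X$ is a unit in $\Zp$, so $M_X(T)\bmod p$ has lowest-degree term of degree exactly $2$. Comparing with Proposition~\ref{prop:disconnected}, which says that $M_X(T) \bmod p$ equals $T^{(\lambda + 1) p^{n_0}} \bar u(T)$ when $\mu = 0$ and vanishes otherwise, non-vanishing forces $\mu(X) = 0$ and $(\lambda + 1) p^{n_0} = 2$. The only solutions are $(n_0, \lambda) = (0, 1)$ and $(p, n_0, \lambda) = (2, 1, 0)$; the second is excluded by the standing hypothesis, since $p = 2$ requires $\tilde X$ to have a cycle of weight coprime to $2$, which by the proof of Proposition~\ref{prop:stabilize} forces $n_0 = 0$. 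Hence $\mu(X) = 0$ and $\lambda(X) = 1$.

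The main obstacle is producing the clean factorization $M_X(T) = -T^2(1+T)^{-1} \det C(T)$, which rests entirely on the balanced hypothesis forcing $B(T)\mathbf{1}$ to be divisible by $T^2$; without this, the lowest-order behavior of $M_X$ modulo $p$ would be considerably harder to control and its identification with $k\kappa_X$ via Kirchhoff would no longer be transparent.
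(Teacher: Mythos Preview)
Your proof is correct and follows essentially the same route as the paper: exploit the balanced hypothesis to show that summing the columns of $D-A(1+T)-A^t(1+T)^{-1}$ produces a column divisible by $-T^2(1+T)^{-1}$, factor this out, and then evaluate the residual determinant at $T=0$ via Kirchhoff's formula to obtain $k\kappa_X$. The paper adds all columns to the first and then cofactor-expands along that column, tracking signs across several $L_{i1}$ minors; your variant of adding to the last column and then performing a row operation so that the final row reads $(0,\ldots,0,k)$ is a slightly tidier bookkeeping choice, but not a different argument. The endgame---reading off $(\lambda+1)p^{n_0}=2$ from Proposition~\ref{prop:disconnected} and ruling out $(p,n_0,\lambda)=(2,1,0)$ via the cycle-weight hypothesis---is identical to the paper's.
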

\begin{proof}
    Let us label the vertices $v_1,\dots, v_r$ and write $d_m=d_i(v_m)=d_o(v_m)$ for $m=1,\dots, r$.
    Consider the matrix
        \[
D-A(1+T)-A^t(1+T)^{-1}.
   \]
If we add every column after the first one to the first, we get
\begin{align*}
M_X(T)&=\det\begin{bmatrix}
    2d_1-d_1(1+T)-d_1(1+T)^{-1}& R_1'\\
    2d_2-d_2(1+T)-d_2(1+T)^{-1}&R_2'\\
    \vdots&\vdots\\
    2d_r-d_r(1+T)-d_k(1+T)^{-1}&R_r'
\end{bmatrix}\\
&=\left(2-(1+T)-(1+T)^{-1}\right)\det\begin{bmatrix}
    d_1& R_1'\\
    d_2&R_2'\\
    \vdots&\vdots\\
    d_r&R_r'
\end{bmatrix}\\
&=(-T^2+T^3-\dots)\left(d_1\det\begin{bmatrix}
    R_2'\\R_3'\\ \vdots\\ R_r'
\end{bmatrix}-d_2\det\begin{bmatrix}
    R_1'\\R_3'\\ \vdots\\ R_r'
\end{bmatrix}+\cdots\right)
    \end{align*}
where $R_i'$ denotes the $i$-th row of $D-A(1+T)-A^t(1+T)^{-1}$ after removing the first entry. Then
\[
\left.\det\begin{bmatrix}
    R_2'\\ R_3'\\
    \vdots\\
    R_r'
\end{bmatrix}\right|_{T=0}=\det(L_{11})=\kappa_X,\left.\quad\det\begin{bmatrix}
    R_1'\\ R_3'\\
    \vdots\\
    R_r'
\end{bmatrix}\right|_{T=0}=\det(L_{21})=-\kappa_X,\dots
\]
by Theorem~\ref{thm:Kirchoff}. Therefore, 
\[
\left.\left(d_1\det\begin{bmatrix}
    R_2'\\R_3'\\ \vdots\\ R_r'
\end{bmatrix}-d_2\det\begin{bmatrix}
    R_1'\\R_3'\\ \vdots\\ R_r'
\end{bmatrix}+\cdots\right)\right|_{T=0}=(d_1+d_2+\dots)\kappa_X=k\kappa_X.
\]
 We deduce that
\[
M_X(T)\in-k\kappa_X T^2+T^3\Zp[[T]].
\]
Thus, under the assumption that $p\nmid k\kappa_X$, the $\mu$-invariant and $\lambda$-invariant of the power series $M_X(T)$ are $0$ and $2$, respectively. If $p=2$, as we have assumed that $\tilde X$ contains a cycle of weight coprime to $p$, the constant $n_0$ given by Proposition~\ref{prop:disconnected} is equal to $0$. Otherwise, if $p>2$
\[
\left(p^{\mu(X)}Tg(X)\right)^{p^{n_0}}u(X)\in-k\kappa_X T^2+T^3\Zp[[T]]
\]
forces $n_0$ to be zero.
In both cases, we have $\mu(X)=0$ and $\lambda(X)=1$.
\end{proof}

\begin{remark}\label{rk:lambda1}
Assume that $X$ is a finite connected balanced graph. If $p=2$ and there is no cycle of weight coprime to $p$, then $\mu(X)=0$ and $0\le \lambda(X)\le 1$. If $p>2$ then $\lambda(X)\ge 1$ as $T^2$ divides $M_X(T)$.
\end{remark}

    In the case of the isogeny graphs considered in \S\ref{S:isogeny}, the base graph is balanced and regular. Thus, we can apply Theorem~\ref{thm:regular-balanced} to deduce the following corollary.
    \begin{corollary}\label{cor:isogeny-regular}
Let $Z_{m_0}'$ be the graph defined as in \S\ref{S:isogeny}.   If $p\nmid 2r\kappa_{{Z'_{m_0}}}$, where $r$ denotes the number of vertices in $Z_{m_0}'$, then $\mu(Z_{m_0}')=0$ and $\lambda(Z_{m_0}')=1$.
    \end{corollary}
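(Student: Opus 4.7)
The plan is to verify the hypotheses of Theorem~\ref{thm:regular-balanced} for $X = Z'_{m_0}$ and then apply it directly. Under the standing assumption $p>2$ from \S\ref{S:isogeny}, the constant $\Zp$-tower $(Z'_n)_{n\ge m_0}$ over $Z'_{m_0}$ is already well-defined, so what remains is to check balancedness, the cycle hypothesis, and the divisibility condition.

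For balancedness, I would invoke the observation already recorded in the proof of Corollary~\ref{cor:isogeny}: the congruence $\ell \equiv 1 \pmod{Np}$ forces every $\ell$-isogeny $\phi$ and its dual $\hat\phi$ to give rise to edges in opposite directions in $Z'_{m_0}$. Hence the adjacency matrix is symmetric, and in particular $d_i(v) = d_o(v) = \ell+1$ at every vertex $v$, so $Z'_{m_0}$ is balanced. These same pairs of opposing edges produce cycles in $\tilde{Z'_{m_0}}$ of weight $2$, which is coprime to $p$ when $p>2$; in either reading, the alternative hypothesis ``$p>2$ or $\tilde X$ contains a cycle of weight coprime to $p$'' of Theorem~\ref{thm:regular-balanced} is satisfied.

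Next, I would translate the divisibility condition. With $r$ denoting the number of vertices of $Z'_{m_0}$, the quantity $k = \sum_{v\in \VV(Z'_{m_0})} d_i(v)$ appearing in Theorem~\ref{thm:regular-balanced} equals $r(\ell+1)$. Since $\ell \equiv 1 \pmod p$, we have $\ell+1 \equiv 2 \pmod p$, and hence $v_p(\ell+1) = v_p(2) = 0$ as $p>2$. Therefore the hypothesis $p \nmid k\kappa_{Z'_{m_0}} = r(\ell+1)\kappa_{Z'_{m_0}}$ is equivalent to $p \nmid 2r\kappa_{Z'_{m_0}}$, which is precisely the assumption of the corollary. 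Theorem~\ref{thm:regular-balanced} then delivers $\mu(Z'_{m_0}) = 0$ and $\lambda(Z'_{m_0}) = 1$.

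There is no genuine obstacle in this argument; it is a routine verification, the only mild subtlety being the recognition that the ``$2$'' in the corollary's hypothesis plays the role of $\ell+1 \pmod p$ inherited from the common in- and out-degrees of $Z'_{m_0}$. Once this identification is made, the corollary reduces to a direct application of Theorem~\ref{thm:regular-balanced}.
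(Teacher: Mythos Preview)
Your proposal is correct and follows exactly the approach the paper intends: the paper simply notes that $Z'_{m_0}$ is balanced and regular and appeals to Theorem~\ref{thm:regular-balanced}, and you have spelled out precisely the verification needed, including the identification $k=r(\ell+1)\equiv 2r\pmod p$ that translates the hypothesis of the theorem into the stated condition $p\nmid 2r\kappa_{Z'_{m_0}}$.
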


\section{Constant $\Zp$-towers over volcano graphs}
\label{S:volcano}

Our goal is to study properties of constant $\Zp$-towers over a volcano graph. In particular, we prove Theorem~\ref{thmE}. We first recall the definition of a volcano graph.

\begin{defn}
A finite undirected graph is said to be an \textbf{abstract crater} if it is one of the following graphs:
\begin{itemize}
    \item A single vertex with two self-loops;
    \item A cycle graph (which can be a single vertex together with a self-loop, or a graph consisting of two vertices with two edges between them).
    \item A single vertex without any loops.
\end{itemize}
    
    A finite undirected graph $X$ is said to be an \textbf{abstract $l$-volcano} of depth $d\ge0$ if $\displaystyle\VV(X)=\bigsqcup_{i=0}^dV_i$ such that there exists a positive integer $l$ such that:
    \begin{itemize}
        \item The subgraph induced by $V_0$ is an abstract crater;
        \item If, $d\ge 1$, all vertices in $V_0\bigsqcup \dots \bigsqcup V_{d-1}$ have degree\footnote{To compute the degree of a vertex we only count loops with multiplicity one, not with multiplicity $2$. } $l+1$, whereas those in $V_d$ have degree $1$; 
        \item If $u\in V_r$ and $v\in V_k$ are connected by an edge, then $|r-k|\le 1$;
        \item For all $0<r\le d$, the subgraph induced by $V_r$ is totally disconnected;
        \item For $0<r\le d$, each vertex in $V_r$ admits exactly one edge to a vertex in $V_{r-1}$.
    \end{itemize}
    The subgraph induced by $V_0$ is called the \textbf{crater} of $X$.
The vertices in $V_r$ are said to be at \textbf{level} $r$. We call the maximal level of $X$ the \textbf{depth} of $X$. 

    \end{defn}
These graphs are utilized to describe the isogeny graphs of ordinary elliptic curves in \cite{kohel}, where $l$ is a prime number; we refer the reader to \cite[\S4]{pazuki} for further details. For our purposes, it is not necessary to assume that $l$ is a prime number.

\begin{defn}
Let $X$ be an abstract $l$-volcano. We define a directed graph $\tilde X$ by assigning an orientation to the edges of $X$ as follows: 
    \begin{itemize}
        \item Each edge between $v$ of level $i$ and $w$ of level $i+1$ becomes an oriented edge with origin $v$ and target $w$. 
        \item Let $k$ be the number of vertices in the crater of $X$. Enumerate the vertices of the crater as $\{v_i:i\in
        \ZZ/k\ZZ\}$ such that $v_i$ and $v_j$ are adjacent if and only if $|i-j|\equiv 1\mod k$. We assign the oriented edges as the ones with origin $v_i$ and target $v_{i+1}$. 
    \end{itemize}
   We shall refer to a constant voltage assignment on $\tilde X$  as a constant voltage assignment on $X$. Similarly, we shall call the image of a constant $\Zp$-tower over $\tilde X$ under the forgetful map a constant $\Zp$-tower over $X$.\end{defn}

   \begin{remark}Let $X$ be an abstract $l$-volcano whose crater is a cycle graph with at least two vertices.
   As an undirected graph, $X$ has as many edges as vertices, which implies that $H_1(X,\Z)\cong\Z$. In particular, there is (up to isomorphism) only one $\Z_p$-tower over $X$. If the crater consists only of one vertex without any loops, $X$ is a tree.  In this case, there is no $\Zp$-tower over $X$.
   \end{remark}

\begin{lemma}\label{lem:connected-volcano}
Let $X$ be an abstract $l$-volcano.  Let $\alpha$ be a constant $\Zp$-valued voltage assignment on $X$. Then $X_n=X(\ZZ/p^n\ZZ,\alpha_{/n})$ is connected for all $n$ if and only if the image of $\alpha $ is in $\Zp^\times$ and the crater of $X$ is not a single vertex without any edges, and not a cycle graph whose length is divisible by $p$. If the crater is a cycle of length $p^{n_0}a$, where $p\nmid a$, then $(X_n)_{n\ge 0}$ is the disjoint union of $p^{n_0}$ copies of a $\Zp$-tower over $X$.
\end{lemma}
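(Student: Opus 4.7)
The plan is to reduce everything to Corollaries~\ref{cor:exist} and \ref{cor:stabilize} after explicitly computing the set of weights of directed cycles in $\tilde X$. Because of the volcano's tree-with-crater structure, this is essentially a combinatorial calculation on the crater alone. Concretely, let $Y$ denote the auxiliary graph obtained from $\tilde X$ by adjoining a reversed edge for every edge, as in Definition~\ref{def:weight}. Deleting the crater edges from $Y$ leaves a disjoint union of trees (with both orientations) hanging off individual crater vertices, since each vertex at level $r\ge1$ has a unique edge to level $r-1$. Any closed walk in $Y$ therefore decomposes into round-trip excursions into these trees plus a closed walk supported on the two-orientation cover of the crater. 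Each tree excursion starts and ends at the same crater vertex and traverses every edge the same number of times in each direction, hence contributes net weight zero, so the set of achievable cycle weights in $Y$ coincides with the set of weights of closed walks supported on the crater.

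Next I would enumerate the possibilities for the crater. If the crater is a single vertex without loops, the entire volcano is a tree and $Y$ admits no cycles at all. If the crater is a cycle graph with $k$ vertices (covering the cases $k\ge 2$ of a genuine cycle, $k=2$ of two parallel edges between two vertices, and $k=1$ of one self-loop), the crater is oriented as a directed $k$-cycle $v_0\to v_1\to\cdots\to v_{k-1}\to v_0$ in $\tilde X$; a closed walk in its two-orientation cover, read modulo $k$, gives zero since it returns to its starting vertex, so the subgroup of weights is exactly $k\Z$. Finally, if the crater is a single vertex with two self-loops, each self-loop is itself a cycle of weight $\pm 1$, so the subgroup generated by cycle weights equals all of $\Z$.

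The connectedness characterization then follows directly from Corollary~\ref{cor:exist}: $(X_n)_{n\ge0}$ is a $\Zp$-tower of connected graphs if and only if $\alpha\in\Zp^\times$ and some cycle of $Y$ has weight coprime to $p$. By the case analysis above, the cycle-weight condition fails precisely when the crater is a single vertex without edges (no cycles at all) or a cycle graph of length divisible by $p$, which is exactly the exception in the statement.

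For the final claim, suppose the crater is a cycle of length $k=p^{n_0}a$ with $p\nmid a$. The subgroup of $\Zp$ generated by cycle weights is $k\Zp=p^{n_0}\Zp$ since $a\in\Zp^\times$, so the index $[\Zp:\langle r_C:C\in\cC\rangle]$ appearing in the proof of Proposition~\ref{prop:stabilize} equals $p^{n_0}$. Applying Proposition~\ref{prop:stabilize} and Corollary~\ref{cor:stabilize} shows that the number of connected components of $X_n$ stabilizes at $p^{n_0}$ and that $(X_n)_{n\ge 0}$ is the disjoint union of $p^{n_0}$ isomorphic copies of a $\Zp$-tower over $X$. The main obstacle is making the tree-excursion decomposition in the first paragraph rigorous; the key point is that, since the complement of the crater is a forest attached to the crater only at its roots, any walk leaving the crater into one of these trees must return along a walk that reverses each underlying edge an equal number of times, forcing zero weight contribution.
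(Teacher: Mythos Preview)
Your approach is correct and is essentially the same as the paper's: the paper's proof is a single line citing Corollary~\ref{cor:exist} and Proposition~\ref{prop:disconnected}, leaving the cycle-weight computation entirely to the reader, while you spell out that computation in detail and then invoke Corollary~\ref{cor:exist}, Proposition~\ref{prop:stabilize}, and Corollary~\ref{cor:stabilize} (which are arguably the more natural references for the second claim).

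Two minor points of care, neither of which affects correctness. First, you move between ``cycles'' and ``closed walks'' in $Y$; the paper's criterion in Proposition~\ref{prop:connected} is phrased in terms of (simple) cycles, but since any closed walk decomposes into simple cycles, the subgroup generated by closed-walk weights coincides with that generated by cycle weights, so your tree-excursion argument is fine. Second, when the crater is a single edgeless vertex you say $Y$ ``admits no cycles at all''; strictly speaking, if the depth is positive then $Y$ still has length-two cycles $u\to v\to u$ built from an edge and its reverse, but these have weight $0$ and hence contribute nothing to the subgroup, so Corollary~\ref{cor:exist} still fails as you claim. With these cosmetic adjustments your write-up is a faithful expansion of the paper's proof.
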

\begin{proof}
    This follows from Corollary~\ref{cor:exist} and Proposition~\ref{prop:disconnected}.
\end{proof}

Note that a constant $\Zp$-tower over an abstract $l$-volcano exists if and only if the crater of $X$ contains at least one edge.

\begin{defn}
      Let $\alpha$ be a constant $\Zp$-valued voltage assignment on $X$. We say that a vertex in the derived graph $X_n:=X(\ZZ/p^n\ZZ,\alpha_{/n})$ is at level $r$ if it is of the form $(v,a)\in\VV(X)\times\ZZ/p^n\ZZ$, where $v$ is a vertex at level $r$. We write $\VV_r(X_n)$ for the set of level-$r$ vertices in $X_n$.
\end{defn}

\begin{proposition}
\label{coverings of undirected volcanos}
    Let $X$ be an abstract $l$-volcano. Assume that the crater is a cycle of length $p^{n_0}a$ where $p\nmid a$ (i.e. an abstract crater that contains a single vertex, without edges or with two self-loops is excluded). If $(\hat X_n)_{n\ge n_0}$ is a constant $\Zp$-tower over $X$, then $\hat X_n$ is an abstract $l$-volcano of the same depth as $X$ for all $n\ge n_0$. Furthermore, the crater of each $\hat X_n$ is a cycle for all $n\ge n_0$.
\end{proposition}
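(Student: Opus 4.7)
The plan is to verify each axiom of an abstract $l$-volcano for $\hat X_n$ by combining a direct analysis of the crater with the covering property of the natural projection $\pi \colon \hat X_n \to X$ obtained from the forgetful map $X_n \to X$, restricted to the connected component $\hat X_n$. Since $X$ is connected, $\pi$ is a surjective covering. Assigning to each $(v,\sigma) \in V(\hat X_n)$ the level of $v$ in $X$, surjectivity of $\pi$ guarantees that every level $0, 1, \ldots, d$ is attained, while no higher level appears, so $\hat X_n$ has depth exactly $d$.

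The main step will be determining the crater of $\hat X_n$. By Lemma~\ref{lem:iso} we may assume the parameter $\alpha = 1$, and without loss of generality $(v_0, 0) \in V(\hat X_n)$, where the crater vertices of $X$ are labelled $v_0, \ldots, v_{p^{n_0}a - 1}$ so that $\tilde X$ contains the oriented crater edges $v_i \to v_{(i+1) \bmod p^{n_0}a}$. Iterating these oriented edges from $(v_0,0)$ yields the sequence $(v_{k \bmod p^{n_0}a}, k \bmod p^n)$ after $k$ steps, and first returns to $(v_0,0)$ when $k = \operatorname{lcm}(p^{n_0}a, p^n) = p^n a$, which is precisely where the hypothesis $p \nmid a$ intervenes. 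This closed walk is a simple cycle of length $p^n a$ passing through the crater vertices $(v_i, \sigma)$ with $\sigma \equiv i \pmod{p^{n_0}}$. A count shows these form $p^{n_0} a \cdot p^{n - n_0} = p^n a$ vertices, accounting for a $1/p^{n_0}$-fraction of $|V_0(X_n)|$ and thus matching the crater of one of the $p^{n_0}$ connected components of $X_n$ described in Proposition~\ref{prop:stabilize}. Since each crater vertex of $\hat X_n$ has exactly two crater-adjacent edges (one incoming and one outgoing in $\tilde X$), the crater subgraph is exhausted by this walk, hence is a cycle graph.

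The remaining volcano axioms transfer from $X$ to $\hat X_n$ via the covering $\pi$. Coverings preserve vertex degrees, so vertices at levels $0, \ldots, d-1$ retain degree $l+1$ and those at level $d$ retain degree $1$. Each edge of $\hat X_n$ lies above an edge of $X$, so no edge skips levels, the induced subgraph on $V_r(\hat X_n)$ is edgeless for all $r > 0$, and each level-$r$ vertex with $r > 0$ has a unique edge to level $r-1$ lifting the unique such edge at its image.

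The main obstacle is the crater step: showing that the iterated crater walk in $\hat X_n$ sweeps out one simple cycle of length $p^n a$ rather than several shorter disjoint ones. The hypothesis $p \nmid a$ is indispensable here, as it produces the clean $\operatorname{lcm}$ value that matches the number of crater vertices within a single connected component, thereby ensuring the walk is Hamiltonian on the crater of $\hat X_n$.
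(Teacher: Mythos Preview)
Your proposal is correct and follows essentially the same approach as the paper: both arguments verify the non-crater volcano axioms by transferring degree and adjacency information from $X$ (you phrase this via the covering map $\pi$, the paper via the definition of the derived graph, but these are the same verification), and both analyze the crater by following the directed walk $(v_{k \bmod p^{n_0}a}, k \bmod p^n)$ and computing its period as $\operatorname{lcm}(p^{n_0}a, p^n) = p^n a$ using $p \nmid a$. The only cosmetic difference is that the paper works in the full derived graph $X_n$ and shows the crater decomposes into $p^{n_0}$ cycles of length $p^n a$, whereas you work directly in the connected component $\hat X_n$ and invoke Proposition~\ref{prop:stabilize} to isolate a single cycle.
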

\begin{proof}
Let $\alpha$ be a constant assignment cover on $X$. Let $d$ be the depth of $X$ and $n\ge0$ an integer. Let $v\in\VV(X_n)$. It follows from the definition of $X(\ZZ/p^n\ZZ,\alpha_{/n})$ that if $v$ is of level $d$, then it is connected to exactly one vertex in $\VV_{d-1}(X_n)$. If it is of level $r$ with $1\le r\le d-1$, then it is connected to exactly one vertex in $\VV_{r-1}(X_n)$ and $l$ vertices in $\VV_{r+1}(X_n)$. If it is in the crater, then the number of level-1 vertices connected to $v$ is constant, equal to its counterpart in $X$.

It remains to show that $\VV_0(X_n)$ consists of $p^{n_0}$ copies of a cycle, which is an abstract crater. Let $\cX_n$ denote the subgraph of $X_n$ induced by $\VV_0(X_n)$. If $\beta_n$ denotes the restriction of $\alpha_{/n}$ to $\cX_0$, we have $\cX_n=X(\ZZ/p^n\ZZ,\beta_n)$.

 Let $\{v_i:i\in\ZZ/p^{n_0}a\ZZ\}$ be the vertices of $\VV_0(X_0)$ and assume that there is an edge going from $v_i$ to $v_{j}$ whenever $j\equiv i+1\mod p^{n_0}a$. 
 The edges of $\VV_0(X_n)$ are of the form $(v_i,a)\rightarrow (v_{i+1},a+1)$. In particular, there is a path of length $j$ from $(v_i,a)$ to $(v_{i+j},a+j)$. This path becomes a cycle if and only if $j\equiv 0\mod p^{n_0}a$ and $j\equiv 0\mod p^n$. As $a$ is coprime to $p$ by Lemma~\ref{lem:connected-volcano}, this is equivalent to $j\equiv 0\mod p^{n}a$. Therefore, $\VV_0(X_n)$ consists of $p^{n_0}$ copies of a cycle of length $p^na$, as desired.
\end{proof}

We are now ready to prove Theorem~\ref{thmE}(a):
\begin{corollary}\label{cor:volcano}
    Let $X$ be an abstract $l$-volcano whose crater is a cycle.  Then $\mu(X)=0$ and $\lambda(X)=1$.
\end{corollary}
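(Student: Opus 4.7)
My plan is to compute the number of spanning trees $\kappa_{\hat X_n}$ of each layer $\hat X_n$ in the constant $\Zp$-tower directly, and then read off $\mu$ and $\lambda$ from the Iwasawa formula \eqref{eq:Iw}. The whole argument rests on one structural observation about volcanoes, so I do not need any machinery beyond Proposition~\ref{coverings of undirected volcanos}.

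The key step is the following claim: for any abstract $l$-volcano $Y$ whose crater is a cycle of length $c$, the number of spanning trees of $Y$ is $\kappa_Y=c$. To prove this, I would argue that every edge joining level $r$ to level $r+1$ (for $r\ge 0$) is a bridge. Given $v\in V_{r+1}$, let $e$ denote its unique edge to some $u\in V_r$. Removing $e$ detaches $v$ together with the entire subtree of vertices reached by iterating upward from $v$ via its edges to $V_{r+2}, V_{r+3},\dots$: indeed, each vertex in this upward subtree lies at level $r'\ge r+1$ and has only one downward edge, which by construction remains inside the subtree. Thus $e$ is a bridge, and all vertical edges of $Y$ must lie in every spanning tree. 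Once all these vertical edges are taken, the resulting subgraph is a forest with exactly $c$ connected components, one rooted at each crater vertex, so completing to a spanning tree reduces to choosing a spanning tree of the crater cycle, of which there are exactly $c$.

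I now apply this to the tower. By Lemma~\ref{lem:connected-volcano} I may write the length of the crater of $X$ as $p^{n_0}a$ with $p\nmid a$. For every $n\ge n_0$, Proposition~\ref{coverings of undirected volcanos} guarantees that $\hat X_n$ is again an abstract $l$-volcano whose crater is a cycle, and the computation inside the proof of that proposition shows this cycle has length $p^n a$. Applying the claim above yields $\kappa_{\hat X_n}=p^n a$, so $\ord_p(\kappa_{\hat X_n})=n$ for all $n\ge n_0$. Comparing with \eqref{eq:Iw} forces $\mu(X)=0$ and $\lambda(X)=1$. The main (and only non-trivial) point is the bridge argument in the first step; after that, everything is a direct consequence of Proposition~\ref{coverings of undirected volcanos} and the Iwasawa formula.
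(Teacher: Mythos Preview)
Your proof is correct and follows essentially the same approach as the paper: both arguments invoke Proposition~\ref{coverings of undirected volcanos} to identify $\hat X_n$ as an abstract $l$-volcano with crater a cycle of length $p^n a$, observe that its spanning trees are obtained by deleting one crater edge (so $\kappa_{\hat X_n}=p^n a$), and then read off $\mu=0$, $\lambda=1$ from \eqref{eq:Iw}. Your bridge argument simply makes explicit what the paper states in one sentence.
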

\begin{proof}
    The proof of Proposition \ref{coverings of undirected volcanos} tells us that $\hat X_{n}$ is an  abstract $l$-volcano whose crater is a cycle of length $p^na$, where $p\nmid a$. The spanning trees for such a graph are obtained from deleting exactly one of the edges in the crater. In particular, the number of spanning trees is exactly $p^na$. Therefore, $\mu(X)=0$ and $\lambda(X)=1$. 
\end{proof}

We introduce the following definition to study abstract volcano graphs in which the crater consists of a single vertex with two self-loops, which were excluded in Proposition~\ref{coverings of undirected volcanos}.

\begin{defn}\label{def:double}
    A finite undirected graph is said to be a \textbf{double crater} if its set of vertices is of the form $\{v_{1},\dots,v_s\}$ for some positive integer $s\ge2$ such that there are exactly two edges between $v_i$ and $v_j$ whenever $|i-j|\equiv 1\mod s$ and there is no edge between $v_i$ and $v_j$ otherwise.
    
    A finite undirected graph $X$ is said to be an \textbf{abstract augmented $l$-volcano} of depth $d\ge0$ if $\displaystyle\VV(X)=\bigsqcup_{i=0}^dV_i$ such that there exists a positive integer $l$ such that:
    \begin{itemize}
        \item The subgraph induced by $V_0$ is an abstract double crater;
        \item All vertices in $V_1\bigsqcup \dots \bigsqcup V_{d-1}$ have degree $l+1$, whereas those in $V_0$ have degree $l+3$ and those in $V_d$ have degree $1$. If $d=0$ all vertices have degree $4$.
        \item If $u\in V_r$ and $v\in V_k$ are connected by an edge, then $|r-k|\le 1$;
        \item For all $0<r\le d$, the subgraph induced by $V_r$ is totally disconnected;
        \item For $0<r\le d$, each vertex in $V_r$ admits exactly one edge to a vertex in $V_{r-1}$.
    \end{itemize}
    As before, the subgraph induced by $V_0$ is called the \textbf{crater} of $X$.
The vertices in $V_r$ are said to be at \textbf{level} $r$.
\end{defn}

\begin{lemma}\label{lem:derived-double}
    Let $X$ be a depth-zero abstract $l$-volcano whose crater consists of one vertex together with two self-loops. Let $\alpha$ be a constant $\Zp$-valued voltage assignment on $X$. If the image of $\alpha$ lies in $\Zp^\times$, then $X(\ZZ/p^n\ZZ,\alpha_{/n})$ is a double crater with $p^n$ vertices.
\end{lemma}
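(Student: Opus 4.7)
The plan is to unpack the definition of the derived graph directly, since the conclusion is essentially a concrete description rather than a deep structural theorem. First, by Lemma~\ref{lem:iso}, I may replace $\alpha$ by $\Phi\circ\alpha$ for any group automorphism $\Phi$ of $\Zp$; since the common value of $\alpha$ lies in $\Zp^\times$, multiplication by $\alpha^{-1}$ is such an automorphism. We may therefore assume without loss of generality that $\alpha$ takes the constant value $1$.

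Next, I would pin down the structure of $\tilde X$: by hypothesis $X$ has a single crater vertex $v$ together with two self-loops and no other edges (the depth is zero). Under the orientation convention for the crater given just after the definition of an abstract augmented $l$-volcano, the unique crater vertex corresponds to $v_0\in \ZZ/1\ZZ$ and each loop is oriented with origin $v_0$ and target $v_{0+1}=v_0$. Thus $\tilde X$ consists of one vertex $v$ together with two directed self-loops $e_1,e_2$, each having source and target $v$.

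Now I would compute $X(\ZZ/p^n\ZZ,\alpha_{/n})$ straight from the definition. Its vertex set is $\{(v,a):a\in \ZZ/p^n\ZZ\}$, which has exactly $p^n$ elements. For every $a\in\ZZ/p^n\ZZ$ and every $i\in\{1,2\}$, the edge $(e_i,a)$ has source $(v,a)$ and target $(v,a+\alpha_{/n}(e_i))=(v,a+1)$. Applying the forgetful map then yields an undirected graph in which, for each $a\in\ZZ/p^n\ZZ$, there are exactly two undirected edges joining the consecutive vertices $(v,a)$ and $(v,a+1)$ and no other edges. Comparing this description with Definition~\ref{def:double} (taking $s=p^n\ge 2$, which is valid for $n\ge 1$; the case $n=0$ is the graph $X$ itself) shows that the resulting undirected graph is precisely a double crater with $p^n$ vertices.

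There is no real obstacle here: the argument is a direct unpacking of the definitions. The only minor subtlety to verify carefully is that the two self-loops at the single crater vertex are each oriented $v\to v$ under the convention defining $\tilde X$, since the indexing $\{v_i:i\in\ZZ/k\ZZ\}$ used in that convention has to be interpreted correctly in the degenerate case $k=1$; once this is granted, the rest is a mechanical check.
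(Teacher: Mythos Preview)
Your proof is correct and follows essentially the same route as the paper: reduce to $\alpha\equiv 1$ (the paper does this implicitly, you invoke Lemma~\ref{lem:iso} explicitly), then read off the vertex and edge sets of the derived graph directly from the definition and compare with Definition~\ref{def:double}. Your treatment is slightly more detailed, including the observation about the degenerate case $n=0$, but there is no substantive difference in method.
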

\begin{proof}
   Let $v$ be the unique vertex of $X$. Suppose that the image of $\alpha$ is $\{1\}$. The vertices of  $X(\ZZ/p^n\ZZ,\alpha_{/n})$ are of the form $(v,a)$, where $a\in\ZZ/p^n\ZZ$. Furthermore, there are two edges between $(v,a)$ and $(v,a+1)$, whereas there is no edge between $(v,a)$ and $(v,b)$ when $|a-b|\not\equiv1\mod p^n$. This coincides with the definition of a double crater.
\end{proof}
This allows us to prove Theorem~\ref{thmE}(b):

\begin{corollary}
    Let $X$ be an abstract $l$-volcano whose crater is a single vertex with two loops. Let $\alpha$ be a constant voltage cover on $X$.  Then $X_n:=X(\ZZ/p^n\ZZ,\alpha_{/n})$ is an abstract augmented $l$-volcano for all $n\ge1$. Furthermore, $\lambda(X)=1$ and $\mu(X)=v_p(2)$.
\label{cor:augmented}
\end{corollary}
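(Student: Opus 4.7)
The plan is to verify the structural claim first, then reduce the spanning-tree count to the crater, and finally read off the Iwasawa invariants from an explicit formula for $\kappa(X_n)$.

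First, I will show that $X_n$ is an abstract augmented $l$-volcano of the same depth $d$ as $X$. Each of the two self-loops of $X$ is a cycle of weight $1$ in $\tilde X$, so Corollary~\ref{cor:exist} ensures every $X_n$ is connected and $n_0 = 0$ in the sense of Proposition~\ref{prop:stabilize}. The level-$0$ subgraph of $X_n$ arises as the derived graph of the crater of $X$, so by the argument of Lemma~\ref{lem:derived-double} it is a double crater on $p^n$ vertices. When $d\ge 1$, the $l-1$ subtrees of $X$ rooted at the crater vertex lift to $p^n$ disjoint isomorphic copies in $X_n$, one hanging off each of the $p^n$ lifted level-$0$ vertices, because each level-$\ge 1$ vertex of $X$ has a unique parent edge. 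A degree check then verifies Definition~\ref{def:double}: level-$0$ vertices acquire degree $4 + (l-1) = l+3$, intermediate levels have degree $l+1$, and level-$d$ vertices have degree $1$ (with the $d=0$ case covered by Lemma~\ref{lem:derived-double} directly).

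Next, I will count spanning trees. Every spanning tree of $X_n$ must contain all $p^n(l^d-1)$ branch edges, since deleting any such edge would disconnect its level-$\ge 1$ endpoint from the rest of the graph. Thus the spanning trees of $X_n$ biject with spanning trees of its level-$0$ subgraph, namely the double crater on $m := p^n$ vertices. On this double crater, two parallel edges between adjacent vertices form a $2$-cycle, so any spanning tree uses at most one edge from each of the $m$ doubled pairs; needing exactly $m-1$ edges, it must omit exactly one pair ($m$ choices) and pick one of the two edges from each of the remaining $m-1$ pairs ($2^{m-1}$ choices). This yields
\[
\kappa(X_n) \;=\; p^n \cdot 2^{p^n - 1}.
\]

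Substituting into \eqref{eq:Iw} gives $\ord_p(\kappa(X_n)) = n + (p^n - 1)\,v_p(2)$, and matching against $\mu(X)\,p^n + \lambda(X)\,n + \nu$ (valid for all $n\ge 1$, not only large $n$) yields $\mu(X) = v_p(2)$ and $\lambda(X) = 1$.

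The most delicate step is the first paragraph: confirming that the $l-1$ subtrees of $X$ lift cleanly to $p^n(l-1)$ disjoint subtrees in $X_n$ with no unintended identifications across different lifted crater vertices, and that the degree at each lifted crater vertex picks up the correct contribution of $4$ from the two self-loops. Once that structural picture is in place, the spanning-tree reduction and the final coefficient matching are short combinatorial verifications.
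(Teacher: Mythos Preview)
Your proof is correct and follows essentially the same approach as the paper: reduce the structural claim to Proposition~\ref{coverings of undirected volcanos} together with Lemma~\ref{lem:derived-double} for the crater, then observe that spanning trees of $X_n$ are in bijection with spanning trees of the double crater $\cX_n$, whose count $p^n\cdot 2^{p^n-1}$ immediately yields $\mu(X)=v_p(2)$ and $\lambda(X)=1$. You have filled in somewhat more detail than the paper does (the explicit spanning-tree argument for the double crater and the degree check at level~$0$), but the strategy is identical.
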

    
\begin{proof}
    The proof of the first assertion is essentially the same as Proposition \ref{coverings of undirected volcanos}, except the description of $\cX_n=X(\ZZ/p^n\ZZ,\beta_n)$, which is given by Lemma~\ref{lem:derived-double}.

    It remains to determine the Iwasawa invariants. The number of spanning trees of $X_n$ is equal to the number of spanning trees of $\cX_n$. As it is a double crater of length $p^n$,  the number of spanning trees is $2^{p^n-1}p^n$. Thus, $\mu(X)=v_p(2)$ and $\lambda(X)=1$.
\end{proof}

\begin{remark}
    We emphasize that the constant $\Zp$-towers studied by Theorem~\ref{thmE} are distinct from the ones constructed in \cite{LM1,LM2}. Indeed, the $\Zp$-towers in the aforementioned work consist of the so-called tectonic volcanoes, where the craters are not cycles. Indeed, the in-degree and out-degree of each vertex are 2, not 1. 

    In \cite{kohel,pazuki}, the volcanic structure of the isogeny graphs of ordinary elliptic curves is obtained after identifying an isogeny with its dual. More precisely, suppose $E$ and $E'$ give rise to two vertices in the graph $Y_l^q(1)$. If $\phi:E\rightarrow E'$ is an $l$-isogeny, there is a dual isogeny $\hat\phi:E'\rightarrow E$ of the same degree. These two isogenies result in two directed edges going in opposite directions in $Y_l^q(1)$. These two edges are identified as one single undirected edge in \cite{kohel,pazuki}.
\end{remark}

We conclude the article with a brief discussion on constant $\Zp$-towers that arise from isogeny graphs of ordinary elliptic curves that are, as discussed above, closely related to abstract $l$-volcanoes.
\begin{lemma}
\label{tot:degree}
    Let $X$ be an abstract $l$-volcano of depth $d$. Assume that the crater is a cycle graph of length $k\ge 2$ or a single vertex with two loops (in which case we set $k=1$). Then the total degree of $X$ is $2kl^{d-1}$.
    If the crater is a single vertex with a single loop, the total degree of $X$ is 
    \[(l-1)^{-1}(l^{d+1}2-(l+1)).\]
    If the crater is a single vertex without any edges. Then the total degree is
    \[2(\ell+1)(\ell-1)^{-1}(\ell^d-1).\]
\end{lemma}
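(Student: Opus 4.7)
The plan is to prove the lemma by a direct case-by-case computation, first determining $|V_r|$ at each level and then summing the degrees using a geometric series.

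I would begin with a recursive count of $|V_r|$. By the defining property of a volcano, each vertex of $V_r$ for $r\ge 1$ has exactly one edge down to $V_{r-1}$, so the number of edges between $V_{r-1}$ and $V_r$ is precisely $|V_r|$. Combined with the fact that every vertex of $V_0\sqcup \cdots \sqcup V_{d-1}$ has degree $l+1$, each vertex of $V_r$ for $1\le r\le d-1$ contributes $l$ edges going up, giving $|V_{r+1}|=l|V_r|$, and hence $|V_r|=l^{r-1}|V_1|$ for all $r\ge 1$. The initial value $|V_1|$ depends on the crater, through the ``crater contribution'' to the degree $l+1$ of each $V_0$-vertex: in the cycle-of-length-$k$ case, each of the $k$ crater vertices has two cycle neighbours and therefore $l-1$ up-edges, so $|V_1|=k(l-1)$; in the single-vertex-with-two-loops case (with $k=1$), the two loops each contribute $1$ to the degree and $|V_1|=l-1$; in the single-loop case $|V_1|=l$; and in the isolated-vertex case $|V_1|=l+1$.

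The second step is to add up the degrees. For $d\ge 1$,
\[
\sum_{v\in\VV(X)}\deg(v)\;=\;|V_0|(l+1)+(l+1)\sum_{r=1}^{d-1}|V_r|+|V_d|,
\]
into which I substitute $|V_r|=l^{r-1}|V_1|$ and evaluate the geometric sum $\sum_{r=1}^{d-1}l^{r-1}=(l^{d-1}-1)/(l-1)$. In the cycle/two-loop case, the $V_0$- and $V_d$-contributions telescope with the middle sum to give a clean closed-form multiple of $l^{d-1}$; in the single-loop and isolated-vertex cases the combination $(l+1)\cdot\text{(geometric sum)}+|V_d|$ simplifies to $(2l^{d+1}-l-1)/(l-1)$ and $2(l+1)(l^d-1)/(l-1)$ respectively after clearing the denominator $l-1$. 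The corner case $d=0$ (the graph is the crater itself) must be handled separately and checked directly against the formulas.

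The main obstacle is purely arithmetic bookkeeping: the contribution from $V_0$ differs from that of interior levels $V_r$ (because the crater contributes edges that stay within $V_0$), and the contribution from $V_d$ differs again (because those vertices have degree $1$ rather than $l+1$), so the three boundary terms must be combined with the geometric sum with care. I would also cross-check each closed form against small cases (for example $d=1$ for a triangular crater, or $d=2$ for a single loop) to confirm the exponents in the final formulas, since a direct count of $\sum\deg(v)$ in the cycle/two-loop case gives a contribution of order $l^d$, and one should verify that this matches the stated $2kl^{d-1}$ under the paper's conventions.
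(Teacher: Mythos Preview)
Your approach is exactly the paper's: count $|V_r|$ level by level via the recursion $|V_{r+1}|=l|V_r|$ for $r\ge 1$, determine $|V_1|$ from the crater type, and then sum the degrees using a geometric series. There is nothing to add on the method.

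Your closing remark, however, is spot on and worth making explicit. In the cycle/two-loop case the computation gives
\[
(l+1)\Bigl(k+k(l-1)\tfrac{l^{d-1}-1}{l-1}\Bigr)+k(l-1)l^{d-1}
=(l+1)kl^{d-1}+k(l-1)l^{d-1}=2kl^{d},
\]
not $2kl^{d-1}$; the paper's own arithmetic on the last line contains this slip (and the case $d=0$, where the total degree is $2k$, confirms the exponent should be $d$). This does not affect the downstream application in Corollary~\ref{cor:dobule-volcano}, since $2kl^{d}$ is coprime to $p$ under the same hypothesis $p\nmid 2kl$, but you should state and prove the corrected formula rather than trying to match the typo.
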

\begin{proof}For every level $i\ge 2$, we have
    \[\vert \{\textup{vertices of level $i$}\}\vert=\vert \{ \textup{vertices of level $i-1$}\}\vert\cdot l.\]
    Each vertex of level $i$ for $1\le i\le d-1$ has in and out degree $(l+1)$.
    Finally, every vertex of level $d$ has degree $1$.  If the crater is a cycle graph of length $k\ge 2$ or if the crater is a single vertex with two loops (so $k=1$), there are 
    $k$ vertices of level zero and $(l-1)k$ vertices of level $1$. 
    Thus, the total degree is
    \begin{align*}&\ (l+1)(k+k(l-1)+k(l-1)l+\dots +k(l-1)l^{d-2})+k(l-1)l^{d-1}\\
    =&\ (l+1)(k+k(l-1)(l^{d-1}-1)(l-1)^{-1})+k(l-1)l^{d-1}\\
    =&\ (l+1)kl^{d-1}+k(l-1)l^{d-1}=2kl^{d-1}.\end{align*}

    If the crater is a single vertex with one loop, there is one vertex of level zero and $l$ vertices of level $1$.
    We obtain a total degree of 
    \begin{align*}&\ (l+1)(1+l+\dots l^{d-1})+l^{d}\\=&\ (l+1)(l^{d}-1)(l-1)^{-1}+l^{d}\\
    =&\ (l-1)^{-1}(l^{d+1}2-(l+1)).\end{align*}

    If the crater is a single vertex without edges, there are $(l+1)$ vertices of level one and we obtain
    \begin{align*}&\ (l+1)(1+(l+1)+l(l+1)+\dots (l+1)l^{d-2})+(l+1)l^{d-1}\\
    =&\ (l+1)(1+(l+1)(l^{d-1}-1)(l-1)^{-1})+(l+1)l^{d-1}\\
    =&\ (\ell +1)(\ell-1)^{-1}((\ell -1)+(\ell +1)(\ell^{d-1}-1)+(\ell-1)\ell^{d-1})\\
    =&\ 2(\ell+1)(\ell-1)^{-1}(\ell^d-1).\end{align*}
\end{proof}
\begin{corollary}\label{cor:dobule-volcano}
    Let $X$ be an abstract $l$-volcano. Assume that the crater is a cycle of length $k$ or a single vertex with two loops. Let $\tilde{X}$ be the directed graph such that $\VV(\tilde X)=\VV(X)$ and $\EE(X)$ is defined by assigning an arbitrary orientation to each edge of $X$ and adding an additional edge in the opposite direction (except for loops).  Assume that $p$ is coprime to $2kl$. Then $\mu(\tilde X)=0$ and $\lambda(\tilde X)=1$.
\end{corollary}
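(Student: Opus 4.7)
The plan is to apply Theorem \ref{thm:regular-balanced} to the directed graph $\tilde X$, so we need to verify its three hypotheses: $\tilde X$ is balanced; either $p>2$ or (in the sense of Definition \ref{def:weight}) $\tilde X$ contains a cycle of weight coprime to $p$; and $p\nmid K\,\kappa_{\tilde X}$, where $K=\sum_v d_i^{\tilde X}(v)$ and $\kappa_{\tilde X}$ is the number of spanning trees of the image of $\tilde X$ under the forgetful map.

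Balancedness is immediate from the construction: each non-loop edge of $X$ contributes a pair of opposite directed edges in $\tilde X$, adding $1$ to both $d_i$ and $d_o$ at each endpoint, while each loop of $X$ becomes a single directed loop, again adding $1$ to both $d_i(v)$ and $d_o(v)$. Hence $d_i(v)=d_o(v)=\deg_X(v)$ for all $v\in\VV(\tilde X)$. Since $p\nmid 2kl$ and $p$ is prime, $p$ is odd, so $p>2$. Moreover, any pair of opposite edges in $\tilde X$ yields a cycle of weight $2$ in the sense of Definition \ref{def:weight}, which is coprime to the odd prime $p$, so a constant $\Zp$-tower over $\tilde X$ exists by Corollary \ref{cor:exist}; in particular $\mu(\tilde X)$ and $\lambda(\tilde X)$ are well defined.

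Next, $K=\sum_v\deg_X(v)$ is the total degree of the undirected graph $X$. By Lemma \ref{tot:degree} (together with a direct count in the trivial depth-zero situations), in both crater settings of the corollary the prime factors of $K$ lie in $\{2\}\cup\mathrm{primes}(k)\cup\mathrm{primes}(l)$, whence $p\nmid 2kl$ yields $p\nmid K$. For $\kappa_{\tilde X}$, the key observation is that the image of $\tilde X$ under the forgetful map is the multigraph $X^{(2)}$ obtained from $X$ by doubling every non-loop edge while keeping each loop as a single loop. Since no spanning tree contains a loop and any spanning tree uses at most one edge of each parallel pair, spanning trees of $X^{(2)}$ are in bijection with pairs consisting of a spanning tree of $X$ and a choice of one of the two parallel copies for each of its $n_X-1$ edges. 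Hence $\kappa_{\tilde X}=\kappa_X\cdot 2^{n_X-1}$. In the cycle-crater case, spanning trees of $X$ correspond to the $k$ choices of which crater edge to delete, so $\kappa_X=k$; in the double-loop case the unique spanning tree is obtained by deleting both loops, so $\kappa_X=1$. Either way, the prime factors of $\kappa_{\tilde X}$ lie in $\{2\}\cup\mathrm{primes}(k)$, and $p\nmid 2k$ forces $p\nmid\kappa_{\tilde X}$.

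All three hypotheses of Theorem \ref{thm:regular-balanced} are now verified, and it yields $\mu(\tilde X)=0$ and $\lambda(\tilde X)=1$. I expect the only real subtlety to be correctly identifying the forgetful image of $\tilde X$ as the edge-doubled multigraph $X^{(2)}$ rather than $X$ itself; this is what produces the factor $2^{n_X-1}$ in $\kappa_{\tilde X}$ and explains why the factor $2$ appears in the hypothesis $p\nmid 2kl$.
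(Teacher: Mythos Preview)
Your proof is correct and follows the same strategy as the paper: verify that $\tilde X$ is balanced, check the coprimality conditions on the total in-degree (via Lemma~\ref{tot:degree}) and on $\kappa_{\tilde X}$, then invoke Theorem~\ref{thm:regular-balanced}. Your treatment is in fact slightly more careful than the paper's: the paper records $\kappa_{\tilde X}=k\,2^{k-1}$, which is literally correct only when $d=0$, whereas your formula $\kappa_{\tilde X}=\kappa_X\cdot 2^{\,n_X-1}$ holds for all depths; either way only the prime divisors $2$ and those of $k$ enter, so the conclusion is unaffected.
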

\begin{proof}
Note that the number of spanning trees of $\tilde{X}$ is $k2^{k-1}$ if $k\ge 2$ and $2$ if $k=1$. In particular, $\kappa_X$ is not divisible by $p$ under our hypotheses. Furthermore, $\tilde{X}$ is balanced by construction. Lemma \ref{tot:degree} says that the total degree is $2kl^{d-1}$, which is also coprime to $p$. Thus, we can apply Theorem  \ref{thm:regular-balanced} to deduce that $\mu(\tilde{X})=0$ and $\lambda(\tilde{X})=1$.
\end{proof}

    The following remark details how Corollary~\ref{cor:dobule-volcano} can be applied to certain isogeny graphs of ordinary elliptic curves.

    \begin{remark}
        Let $\cY$ be a connected component of $Y_l^q(Np)$ of the graph defined as in Definition~\ref{def:intro}. We assume that the elliptic curves giving rise to $\cY$ are ordinary. The endomorphism rings of these elliptic curves are orders of an imaginary quadratic field $K$. We assume that $l$ is not inert in $K$. Recall that we assume $l\equiv 1\mod Np$. It follows from \cite[Theorem~5.5]{LM2} that the number of connected components of $Y_l^q(Np^n)$ lying above $\cY$ is bounded as $n$ increases. For each $n$, let $\cY_n$ be a connected component of $Y_l^q(Np^n)$ lying above $\cY$ such that the image of $\cY_{n+1}$ in $Y_l^    q(Np^n)$ is $\cY_n$. Then $(\cY_n)_{n\ge n_0}$ is a constant $\Zp$-tower over $\cY$ by Lemma~\ref{lem:constant}.

    It follows from \cite[Theorems~6.23 and 6.24]{LM2} that $\cY_n$ is a "finite tectonic volcano graph" for all $n\ge1$. Under the assumption that $l\equiv 1\mod Np$, $\cY$ is isomorphic to a graph $\tilde X$ obtained from  an abstract $l$-volcano after applying the procedure described in the statement of Corollary~\ref{cor:dobule-volcano}. In particular, if $p$ is coprime to $2$ and the length of the number of vertices in the crater of $\cY$, we have $\mu(\cY)=0$ and $\lambda(\cY)=1$.\label{rk:ordinary}
    \end{remark}
    

\bibliographystyle{alpha}
\bibliography{references}
\end{document}